\title{Unitary and Euclidean Representations of a Quiver\footnotetext{This is the author's version of a work that was published in Linear Algebra Appl. 278 (1998) 37--62. Partially supported
        by Grant No. U6E000 from the   International Science Foundation.}
}
\author{Vladimir V. Sergeichuk \\
        Institute of Mathematics,
        Tereshchenkivska 3, Kiev, Ukraine\\ sergeich@imath.kiev.ua}
\date{}
\newtheorem{theorem}{Theorem}[section]
\newtheorem{lemma}{Lemma}[section]
\newtheorem{corollary}{Corollary}[section]
\theoremstyle{remark}
\def\newpic#1{%
   \def\emline##1##2##3##4##5##6{%
      \put(##1,##2){\special{em:point #1##3}}%
      \put(##4,##5){\special{em:point #1##6}}%
      \special{em:line #1##3,#1##6}}}
\newcommand{\matr}[4]%
   {\left[\genfrac{}{}{0pt}{}{#1}{#3}\, \genfrac{}{}{0pt}{}{#2}{#4}\right]}
\newcommand{\eprf}
{\hfill$\Box$}
\newcommand{\Ker}%
{\mathop{\rm Ker}\nolimits}
\begin{document}
\maketitle

\begin{abstract}
     A unitary (Euclidean) representation of a quiver is given
by assigning to each vertex a
unitary (Euclidean) vector
space and to each arrow a
linear mapping of the
corresponding vector spaces.
We recall an algorithm for
reducing the matrices of a
unitary representation to
canonical form, give a
certain description of the
representations of canonical
form, and reduce the problem
of classifying Euclidean
representations to the
problem of classifying
unitary representations. We
also describe the set of
dimensions of all
indecomposable unitary
(Euclidean) representations
of a quiver and establish the
number of parameters in an
indecomposable unitary
representation of a given
dimension.
\end{abstract}

\newcommand{\quiv}{%
\!\!
\unitlength 0.5mm
\linethickness{0.4pt}
\begin{picture}(9,0)(7,13.5)
\put(15.00,15.00){\circle*{0.7}}
\put(9.47,15.00){\oval(4.00,4.00)[l]}
\bezier{16}(9.53,16.93)(11.33,17.00)(13.73,15.93)
\put(13.80,14.33){\vector(2,1){0.2}}
\bezier{20}(9.53,13.00)(11.80,13.13)(13.80,14.33)
\end{picture}
\! }

\newcommand{\pair}{%
\!
\unitlength 0.5mm
\linethickness{0.4pt}
\begin{picture}(15,0)(-7,-1.5)
\put(0.00,0.00){\circle*{0.8}}
\put(-5.53,0.00){\oval(4.00,4.00)[l]}
\bezier{16}(-5.47,1.93)(-3.67,2.00)(-1.27,0.93)
\put(-1.20,-0.67){\vector(2,1){0.2}}
\bezier{20}(-5.47,-2.00)(-3.20,-1.87)(-1.20,-0.67)
\put(5.53,0.00){\oval(4.00,4.00)[r]}
\bezier{16}(5.47,1.93)(3.67,2.00)(1.27,0.93)
\put(1.20,-0.67){\vector(-2,1){0.2}}
\bezier{20}(5.47,-2.00)(3.20,-1.87)(1.20,-0.67)
\end{picture}
\!\! }

\section{Introduction}

Many problems of linear
algebra can be formulated and
studied in terms of quivers
and their representations,
which were proposed by
Gabriel \cite{a_un1} (see
also \cite{a_un2}). A {\it
quiver} is a directed graph.
Its {\it representation}
${\cal A}$ is given by
assigning to each vertex $i$
a vector space ${\cal A}_i$
and to each arrow $\alpha : i
\to j$  a linear mapping
${\cal A}_{\alpha}: {\cal
A}_i \to {\cal A}_j$. For
example, the canonical form
problems for representations
of the quivers \quiv and \!
\unitlength 0.3mm
\linethickness{0.4pt}
\begin{picture}(22.12,9.55)(2,2)
\put(1.27,4.33){\circle*{0.89}}
\put(21.67,4.33){\circle*{0.89}}
\put(19.67,5.33){\vector(3,-1){0.2}}
\multiput(2.67,4.77)(0.20,0.11){10}{\line(1,0){0.20}}
\multiput(4.69,5.87)(0.30,0.11){7}{\line(1,0){0.30}}
\multiput(6.78,6.66)(0.54,0.12){4}{\line(1,0){0.54}}
\multiput(8.93,7.14)(1.11,0.08){2}{\line(1,0){1.11}}
\multiput(11.15,7.31)(1.14,-0.07){2}{\line(1,0){1.14}}
\multiput(13.43,7.16)(0.59,-0.11){4}{\line(1,0){0.59}}
\multiput(15.78,6.71)(0.32,-0.11){12}{\line(1,0){0.32}}
\put(19.67,3.33){\vector(3,1){0.2}}
\multiput(2.67,3.89)(0.20,-0.11){10}{\line(1,0){0.20}}
\multiput(4.69,2.76)(0.30,-0.12){7}{\line(1,0){0.30}}
\multiput(6.78,1.95)(0.43,-0.10){5}{\line(1,0){0.43}}
\multiput(8.93,1.47)(1.11,-0.08){2}{\line(1,0){1.11}}
\multiput(11.15,1.30)(1.14,0.07){2}{\line(1,0){1.14}}
\multiput(13.43,1.45)(0.59,0.12){4}{\line(1,0){0.59}}
\multiput(15.78,1.92)(0.32,0.12){12}{\line(1,0){0.32}}
\end{picture}
correspond to the canonical
form problems for linear
operators (whose solution is
the Jordan normal form) and
for pairs of linear mappings
from one space to another
(the matrix pencil problem,
solved by Kronecker).

In this chapter we study
unitary and Euclidean
representations of a quiver
up to isometry. A {\it
unitary} ({\it
Euclidean})\label{page_un_ksdudr}
{\it representation} ${\cal
A}$ is given by assigning to
each vertex $i$ a finite
dimensional unitary
(Euclidean) space ${\cal
A}_i$ and to each arrow
$\alpha : i \to j$  a linear
mapping ${\cal A}_{\alpha}:
{\cal A}_i \to {\cal A}_j$.
We say that two unitary
(Euclidean) representations
${\cal A}$ and ${\cal B}$ are
{\it isometric} and write
$\cal A\simeq\cal B$ if there
exists a system of isometries
$\Phi_i :  {\cal A}_i \to
{\cal B}_i$ such that $\Phi_j
{\cal A}_{\alpha} = {\cal
B}_{\alpha}\Phi_i$ for each
$\alpha : i\to j$.

Our main tool is Littlewood's
algorithm \cite{a_un3} for
reducing matrices to
triangular canonical form via
unitary similarity. In
\cite{a_un4} I rediscovered
Littlewood's algorithm and
applied it to the canonical
form problem for unitary
representations of a quiver.
Various algorithms for
reducing matrices to
different canonical forms
under unitary similarity were
also proposed by Brenner,
Mitchell, McRae, Radjavi,
Benedetti and Gragnolini, and
others; see Shapiro's survey
\cite{a_un5}.

In Section \ref{s_un2} we
recall briefly Littlewood's
algorithm and study the
structure of canonical
matrices much as it was made
in [4] for the matrices of
linear operators in a unitary
space.

We say that a matrix problem
is {\it unitarily wild} if it
contains the problem of
classifying linear operators
in a unitary space. In
Section \ref{ss_un2.3} we
show that the last problem
contains the problem of
classifying unitary
representations of an
arbitrary quiver (i.e., it is
hopeless in a certain sense)
and give examples of
unitarily wild matrix
problems.

The vector $$\dim{\cal
A}=({\dim}{\cal A}_1,
{\dim}{\cal A}_2, \dots,
{\dim}{\cal A}_p) \in
{\mathbb N}_0^p$$ is called
the {\it dimension} of a
representation ${\cal A}$ of
a quiver $Q$ with vertices
$1, 2,\dots,p$ (we denote
$${\mathbb N}=\{1,2,\dots\},\quad{\mathbb
N}_0=\{0,1,2,\dots\}).$$
     In Section \ref{s_un3} we describe the set of dimensions of
direct-sum-indecomposable
unitary representations of a
quiver, and  establish the
number of parameters in an
indecomposable unitary
representation of a given
dimension. Analogous, but
much more fundamental and
complicated, results for
non-unitary representations
of a quiver were obtained by
V. G. Kac
\cite{a_un6,a_un7,a_un8} (see
also \cite[Sect 7.4]{a_un2}).

In particular, if $z\in
{\mathbb N}^p$ and $Q$ is a
connected quiver other than
$\bullet$ and
$\bullet\!\to\!\bullet$, then
there exists an
indecomposable unitary
representation of dimension
$z$ if and only if $zM_Q\ge
z$, where $M_Q=[m_{ij}]$ is
the $p\times p$ matrix whose
entry $m_{ij}$ is the number
of arrows of the form $i\to
j$ and $i\gets j$, where
$$(t_1,\dots,t_p)\ge
(z_1,\dots, z_p)$$ means
$$t_1\ge z_1,\dots, t_p\ge
z_p.$$

In Section \ref{s_un4} we
study Euclidean
representations of a quiver.
     Let ${\cal A}^{\mathbb C}$ denote the unitary representation obtained
from  a Euclidean
representation ${\cal A}$ by
complexification (${\cal A}$
and  ${\cal A}^{\mathbb C}$
are given by the same set of
real matrices). In Section
\ref{ss_un4.1} we prove
intuitively obvious facts
that
\begin{itemize}
  \item[(i)] ${\cal A}^{\mathbb
C}\simeq {\cal B}^{\mathbb
C}$ implies ${\cal
A}\simeq{\cal B}$, and
  \item[(ii)] if ${\cal A}$ is
indecomposable and ${\cal
A}^{\mathbb C}$ is
decomposable, then ${\cal
A}^{\mathbb C}\simeq{\cal
U}\oplus {\bar {\cal U}}$,
where ${\cal U}$ is an
indecomposable unitary
representation.
\end{itemize}
This will imply that unitary
and Euclidean representations
have the same sets of
dimensions of indecomposable
representations.

In Section \ref{ss_un4.2} we
study, when a given unitary
representation of a quiver
can be obtained by
complexification. In
particular, let $A$ be a
complex matrix that is not
unitarily similar to a direct
sum of matrices, and let
$S^{-1}AS={\bar A}$ for a
unitary matrix $S$ (such $S$
exists if $A$ is unitarily
similar to a real matrix).
Then $A$ is unitarily similar
to a  real matrix if and only
if $S$ is symmetric.


\section{Unitary matrix problems}
\label{s_un2}

We suppose that the complex
numbers are lexicographically
ordered:
\begin{equation}
a+bi\preceq a'+b'i \ \mbox{if
either}\  a=a' \ \mbox{and} \
b\le b',\  \mbox{or}\ a<a';
\label{un-1.0}
\end{equation}
and that the set of blocks of
a block matrix $A=[A_{ij}]$
are linearly ordered:
             \begin{equation}
 A_{ij}\le A_{i'j'} \ \mbox{if either} \ i=i'  \ \mbox{and} \ j\le j',
 \ \mbox{or}\   i>i'.                              \label{un-1.1}
                  \end{equation}

     A block complex matrix with a given (perhaps empty)
set of marked square blocks
will be called a {\it marked
block matrix}\/; a square
block is marked by a line
along its principal diagonal.
By a {\it unitary matrix
problem} we mean the
classification problem for
marked block matrices
$$A=[A_{ij}],\qquad 1\le i\le
l,\quad 1\le j\le r,$$ up to
transformations
                  \begin{equation}
A\mapsto
B:=R^{-1}AS=[R^{-1}_iA_{ij}S_j],
\label{un-1.2}
                  \end{equation}
where
$$R=R_1\oplus\dots\oplus
R_l,\qquad
S=S_1\oplus\dots\oplus S_r$$
are unitary matrices, and
$R_i=S_j$ whenever the block
$A_{ij}$ is marked. The
transformation (\ref{un-1.2})
is called an {\it admissible
transformation}; we say that
these marked block matrices
$A$ and $B$ (with the same
disposition of marked blocks)
are {\it equivalent}  and
write $A\sim B$ or
                  \begin{equation}
     (R,S):A\leadsto B.
     \label{un-1.3}
                  \end{equation}
Notice that a matrix
consisting of a single block
is reduced by transformations
of unitary similarity if the
block is marked, and by
transformations of unitary
equivalence otherwise.
Moreover, the matrices of
every unitary representation
${\cal A}$ of a quiver can be
placed into a marked block
matrix $A$ such that the
admissible transformations
with $A$ correspond to
reselections of the
orthogonal bases in the
spaces of ${\cal A}$, for
example,
\begin{equation}
\special{em:linewidth 0.4pt}
\unitlength 0.90mm
\linethickness{0.4pt}
\begin{picture}(116.34,20.33)(5,26)
\put(16.34,20.33){\makebox(0,0)[cc]{1}}
\put(46.34,20.33){\makebox(0,0)[cc]{3}}
\put(31.34,35.33){\makebox(0,0)[cc]{2}}
\put(17.84,21.66){\vector(-1,-1){0.2}}
\emline{30.01}{33.83}{1}{17.84}{21.66}{2}
\put(18.34,20.33){\vector(-1,0){0.2}}
\emline{44.34}{20.33}{3}{18.34}{20.33}{4}
\put(31.34,22.83){\makebox(0,0)[cc]{$\nu$}}
\put(22.84,29.33){\makebox(0,0)[cc]{$\mu$}}
\put(40.34,29.33){\makebox(0,0)[cc]{$\xi$}}
\put(3.84,20.33){\makebox(0,0)[cc]{$\lambda$}}
\emline{71.34}{10.33}{5}{71.34}{40.33}{6}
\emline{71.34}{10.33}{7}{116.34}{10.33}{8}
\emline{116.34}{10.33}{9}{116.34}{40.33}{10}
\emline{116.34}{40.33}{11}{71.34}{40.33}{12}
\emline{71.34}{25.33}{13}{116.34}{25.33}{14}
\emline{86.34}{40.33}{15}{86.34}{10.33}{16}
\emline{101.34}{40.33}{17}{101.34}{10.33}{18}
\put(78.84,32.83){\makebox(0,0)[cc]{$A_{\lambda}$}}
\put(93.84,32.83){\makebox(0,0)[cc]{$A_{\mu}$}}
\put(108.84,32.83){\makebox(0,0)[cc]{$A_{\nu}$}}
\put(108.84,17.83){\makebox(0,0)[cc]{0}}
\put(93.84,17.83){\makebox(0,0)[cc]{$A_{\xi}$}}
\put(78.84,17.83){\makebox(0,0)[cc]{0}}
\put(78.84,45.33){\makebox(0,0)[cc]{$S_1$}}
\put(93.84,45.33){\makebox(0,0)[cc]{$S_2$}}
\put(108.84,45.33){\makebox(0,0)[cc]{$S_3$}}
\put(66.34,32.83){\makebox(0,0)[cc]{$S_1^{-1}$}}
\put(66.34,17.83){\makebox(0,0)[cc]{$S_3^{-1}$}}
\emline{71.34}{40.33}{19}{76.34}{35.33}{20}
\emline{81.34}{30.33}{21}{86.34}{25.33}{22}
\emline{101.34}{25.33}{23}{106.34}{20.33}{24}
\emline{111.34}{15.33}{25}{116.34}{10.33}{26}
\emline{116.34}{10.33}{27}{116.34}{10.33}{28}
\put(44.67,21.66){\vector(1,-1){0.2}}
\emline{32.67}{33.66}{29}{44.67}{21.66}{30}
\put(8.37,20.33){\oval(4.07,4.27)[l]}
\emline{8.37}{22.37}{31}{10.59}{22.27}{32}   
\emline{10.59}{22.27}{33}{14.80}{21.20}{34}
\put(15.00,19.67){\vector(4,1){0.2}}
\emline{8.40}{18.20}{35}{9.88}{18.30}{36}
\emline{9.88}{18.30}{37}{15.00}{19.67}{38}
\end{picture}
\label{un-1.4}
                   \end{equation}
\\

\subsection{An algorithm}
\label{ss_un2.1}

The algorithm is based on the
following two lemmas:
\medskip

\begin{lemma}\label{lem_un2.1.1}
{\rm(a)} {Each complex matrix
$A$ is unitarily equivalent
to the matrix}
\begin{equation}
D=a_1I\oplus\dots\oplus
a_{k-1}I\oplus 0,           \
\ \ a_i\in\mathbb R, \ \ \
a_1>\dots>a_{k-1}>0.
\label{un-2.2}
\end{equation}

{\rm(b)} {If $R^{-1}DS=D'$,
where $R$ and $S$ are unitary
matrices and $D,\ D'$ are of
the form} \eqref{un-2.2},
{\it then $D=D'$,
$$S=S_1\oplus\dots \oplus
S_{k-1}\oplus S',$$ and
$$R=S_1\oplus\dots\oplus
S_{k-1}\oplus R',$$ where
each $S_i$ has the same size
as $a_iI$}.
\end{lemma}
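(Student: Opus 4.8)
This is in essence the singular value decomposition together with its uniqueness, so the plan is to derive both parts from the spectral theorem applied to the Hermitian positive semidefinite matrix $A^{*}A$. For part (a), I would choose a unitary $S$ with $S^{*}A^{*}AS=a_{1}^{2}I\oplus\dots\oplus a_{k-1}^{2}I\oplus 0$, where $a_{1}>\dots>a_{k-1}>0$ are the square roots of the distinct nonzero eigenvalues of $A^{*}A$, each block $a_{i}^{2}I$ having size equal to the corresponding multiplicity and the trailing zero block corresponding to the kernel of $A$. Partitioning $S=[S_{1},\dots,S_{k-1},S_{0}]$ conformally, the identity $(AS_{i})^{*}(AS_{j})=S_{i}^{*}A^{*}AS_{j}$ shows that the columns of $a_{i}^{-1}AS_{i}$ are orthonormal, that columns coming from different blocks are mutually orthogonal, and that $AS_{0}=0$. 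Hence $Q:=[a_{1}^{-1}AS_{1},\dots,a_{k-1}^{-1}AS_{k-1}]$ has orthonormal columns; extending $Q$ by Gram--Schmidt to a unitary matrix $R=[Q,R_{0}]$ and checking column block by column block that $AS$ and $RD$ coincide gives $R^{-1}AS=D$.

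For part (b), suppose $R^{-1}DS=D'$ with $R,S$ unitary and $D,D'$ of the form \eqref{un-2.2}. Since $R$ is unitary, $D'^{*}D'=S^{*}D^{*}R\,R^{-1}DS=S^{*}(D^{*}D)S$, so $D^{*}D$ and $D'^{*}D'$ are unitarily similar. Both are diagonal, with entries the numbers $a_{i}^{2}$ (respectively $(a_{i}')^{2}$) together with zeros, listed in decreasing order with multiplicities; as unitary similarity preserves the spectrum with multiplicities, the two lists must agree, i.e.\ $D=D'$. Consequently $S$ commutes with the block-scalar matrix $D^{*}D=a_{1}^{2}I\oplus\dots\oplus a_{k-1}^{2}I\oplus 0$, whose diagonal blocks carry pairwise distinct scalars, and inspecting the block entries of $SD^{*}D=D^{*}DS$ forces $S=S_{1}\oplus\dots\oplus S_{k-1}\oplus S'$ with each $S_{i}$ of the size of $a_{i}I$. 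Finally, rewriting $R^{-1}DS=D$ as $DS=RD$ and comparing the $i$-th column block of the two sides (using $D=a_{1}I\oplus\dots\oplus a_{k-1}I\oplus 0$ and the block form of $S$ just obtained) shows that the columns of $R$ corresponding to the nonzero blocks of $D$ are precisely $S_{1}\oplus\dots\oplus S_{k-1}$ padded with zero rows; since these columns already have unit norm within those rows, orthonormality of the rows of $R$ forces the remaining columns of $R$ to vanish in those rows, so $R=S_{1}\oplus\dots\oplus S_{k-1}\oplus R'$ with $R'$ unitary.

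The argument is routine linear algebra throughout. The only places that need genuine care are the bookkeeping with the rectangular shapes and the trailing zero blocks in part (a), so that the Gram--Schmidt extension supplies exactly the right number of columns for $R$, and in part (b) the passage from ``$S$ is block diagonal'' to ``$R$ is block diagonal'', where one must actually use unitarity of $R$ (that its rows, not just its columns, are orthonormal) rather than merely read the answer off a matrix product. I expect this last point to be the main, if modest, obstacle.
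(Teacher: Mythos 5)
Your proof is correct and takes essentially the same route as the paper's (which is only sketched there): part (a) is the singular value decomposition, and part (b) rests on $D=D'$ together with the commutation relation $D^*DS=SD^*D$ forcing block-diagonality. The only cosmetic difference is that the paper also invokes $DD^*R=RDD^*$ to handle $R$, whereas you extract the block structure of $R$ directly from $DS=RD$ and the orthonormality of its rows.
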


\begin{lemma}\label{lem_un2.1.2}
     {\rm(a)} {\it Each square complex matrix $A$ is unitarily
similar to the
block-triangular matrix}
\begin{equation}
F=\left[ \begin{array}{cccc}
             \lambda_1I&  F_{12}   & \cdots & F_{1k}   \\
                       &\lambda_2I & \cdots & F_{2k}   \\
                       &           & \ddots & \vdots  \\
                  0    &           &     & \lambda_kI
              \end{array} \right],\quad
      \parbox{4cm}
             {$\lambda_1\succeq\dots\succeq\lambda_k$
              $($see \eqref{un-1.0}$)$,
              the columns of $F_{i,i+1}$
              are linearly independent
              if $\lambda_i=\lambda_{i+1}.$}
              \label{un-2.3}
\end{equation}

{\rm(b)} {\it If
$S^{-1}FS=F'$, where $S$ is a
unitary matrix and $F$ and
$F'$ have the form}
\eqref{un-2.3}, {\it then
$\lambda_iI=\lambda'_iI$ and
$$S=S_1\oplus\dots\oplus
S_{k},$$ where each $S_i$ has
the same size as
$\lambda_iI$}.
\end{lemma}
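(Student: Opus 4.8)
The plan is to prove the two parts separately: part (a) by combining a Schur-type triangularization with a refinement of each nilpotent diagonal block, and part (b) by giving an intrinsic description of the first diagonal block of a matrix in the form \eqref{un-2.3} and then arguing by induction. For (a), I would first reduce $A$, by a Schur-type induction on $n$ (at each step choosing a unit eigenvector for the $\preceq$-largest among the eigenvalues not yet used), to an upper triangular matrix whose diagonal lists the eigenvalues of $A$ with equal ones grouped consecutively and distinct groups in decreasing $\preceq$-order. This presents $A$ as unitarily similar to a block upper triangular matrix with diagonal blocks $\mu_1I+N_1,\dots,\mu_mI+N_m$, where $\mu_1\succ\dots\succ\mu_m$ and each $N_j$ is strictly upper triangular, hence nilpotent. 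Conjugating by a unitary of the form $U_1\oplus\dots\oplus U_m$ preserves block upper triangularity and replaces the diagonal blocks by $\mu_jI+U_j^{*}N_jU_j$, so it suffices to reduce each nilpotent $N_j$, by unitary similarity, to a strictly block upper triangular matrix all of whose first superdiagonal blocks have linearly independent columns; splitting the scalar block $\mu_jI$ accordingly and relabeling all resulting blocks in decreasing $\preceq$-order then yields \eqref{un-2.3} (the condition on $F_{i,i+1}$ holds by construction when blocks $i$ and $i+1$ come from the same group, and is vacuous otherwise).

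The technical heart of (a), and the step I expect to be the main obstacle, is this reduction of a nilpotent matrix $N\in\mathbb{C}^{n\times n}$ of nilpotency index $k$. I would use the flag $0=K_0\subsetneq K_1\subsetneq\dots\subsetneq K_k=\mathbb{C}^n$ with $K_t=\Ker N^t$, choose an orthonormal basis adapted to the orthogonal decomposition $\mathbb{C}^n=W_1\perp\dots\perp W_k$ where $W_t=K_t\cap K_{t-1}^{\perp}$, and order the coordinate blocks as $W_1,\dots,W_k$. Since $N(W_t)\subseteq N(K_t)\subseteq K_{t-1}=W_1\oplus\dots\oplus W_{t-1}$, the matrix of $N$ in this basis is strictly block upper triangular; and if $v\in W_t$ satisfies $Nv\in K_{t-2}$ then $N^{t-1}v=0$, so $v\in K_{t-1}\cap K_{t-1}^{\perp}=\{0\}$, which shows the compression of $N$ to $W_{t-1}\oplus W_t$ --- that is, the $(t-1,t)$ block --- has trivial kernel, i.e.\ linearly independent columns. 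This is exactly the required form, and completes (a).

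For (b), the key observation is that for every $F$ of the form \eqref{un-2.3} one has $\Ker(F-\lambda_1I)=$ the first coordinate block. Indeed, block triangularity forces any vector annihilated by $F-\lambda_1I$ to have vanishing components in all diagonal blocks $\lambda_jI$ with $\lambda_j\ne\lambda_1$, hence to lie in the (consecutive) group of $\lambda_1$-blocks, on which $F-\lambda_1I$ acts as a strictly block upper triangular matrix whose superdiagonal blocks have linearly independent columns; a bottom-up elimination then kills every component but the first. Since $\lambda_1$ is the $\preceq$-largest eigenvalue of $A$, an invariant of the unitary similarity class, the relation $S^{-1}FS=F'$ gives $\lambda_1=\lambda_1'$ and shows that the unitary $S$ carries $\Ker(F'-\lambda_1'I)$ onto $\Ker(F-\lambda_1I)$, i.e.\ the first coordinate block of $F'$ onto that of $F$; hence these blocks have equal dimension, so $\lambda_1I=\lambda_1'I$, and since $S$ is unitary this forces $S=S_1\oplus S'$ with $S_1$ unitary of that size. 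Restricting the identity $FS=SF'$ to the complementary blocks then shows that $S'$ conjugates the trailing principal submatrices of $F$ and $F'$, which are again of the form \eqref{un-2.3}, and the conclusion follows by induction on the size of the matrix.
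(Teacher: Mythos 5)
Your proof is correct and follows essentially the same route as the paper's (which is only sketched there): part (a) rests on an orthonormal basis adapted to a flag of kernels, and your two-step version (Schur form with equal eigenvalues grouped, then the flag $\Ker N^t$ inside each nilpotent diagonal block) produces exactly the flag $\Ker(A-\lambda_1I)\cdots(A-\lambda_rI)$ that the paper uses directly. For part (b) the paper merely says to equate the blocks of $FS=SF'$ successively; your identification of the first coordinate block as $\Ker(F-\lambda_1I)$ followed by peeling it off and inducting is a clean and complete way of organizing that same computation.
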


\begin{proof} These lemmas were proved in many articles, see,
for example,
\cite{a_un3,a_un4,a_un5}, so
we give only an outline of
their proofs. Part (a) of
Lemma \ref{lem_un2.1.1} is
the singular value
decomposition; part (b)
follows from
$$D=D',\qquad
S^*D^*R^{*-1}=D^*,\qquad
S^{-1}DR=D,$$
$$D^2R=RD^2,\qquad D^2S=SD^2.$$ The
matrix (\ref{un-2.3}) is the
matrix of an arbitrary linear
operator $A: {\mathbb C}^n\to
{\mathbb C}^n$ in an
orthogonal basis
$f_1,\dots,f_n$ such that
$f_1,\dots,f_{t_r}$ is a
basis of
$$\Ker(A-\lambda_1I)\cdots
(A-\lambda_rI),\qquad 1\le
r\le k,$$ where
$$(x-\lambda_1)\cdots
(x-\lambda_k),\qquad
\lambda_1\succeq\dots\succeq\lambda_k,$$
is the minimal polynomial of
$A$; it proves part (a) of
Lemma \ref{lem_un2.1.2}.
Successively equating the
blocks of $FS=SF'$ ordered
with (\ref{un-1.1}), we prove
part (b). \end{proof}

By the {\it canonical
part}\label{page_un2.1.3} of
the matrix (\ref{un-2.2}) or
(\ref{un-2.3}), we mean the
matrix (\ref{un-2.2}) or,
respectively, the collection
of blocks $F_{ij},\ i\ge j.$
According to Lemmas
\ref{lem_un2.1.1} and
\ref{lem_un2.1.2}, the
canonical part is uniquely
determined by the initial
matrix $A$ and does not
change if $A$ is replaced by
a unitarily equivalent or,
respectively, similar matrix.
\medskip

      {\it The algorithm for reducing a marked block matrix $A=[A_{ij}]$
to canonical form:}\\[-3mm]
\label{page_un_154}

     Let $A_{pq}$ be the first (in the ordering
     (\ref{un-1.1}))
block of $A$ that changes
under admissible
transformations
({\ref{un-1.2}). Depending on
the arrangement of the marked
blocks, it is reduced by the
transformations of unitary
equivalence or similarity.
Respectively, we reduce
$A=[A_{ij}]$ to the matrix
$\tilde{A}=[\tilde{A}_{ij}]$
with $\tilde{A}_{pq}$ of the
form (\ref{un-2.2}) or
(\ref{un-2.3}), and then
restrict ourselves to those
admissible transformations
with $\tilde{A}$ that
preserve the canonical part
of $\tilde{A}_{pq}$. As
follows from Lemmas
\ref{lem_un2.1.1}(b) and
\ref{lem_un2.1.2}(b), they
are exactly the admissible
transformations with the
marked block matrix $A'$ that
is obtained in the following
way: The block
$\tilde{A}_{pq}$ of the form
(\ref{un-2.2}) or
(\ref{un-2.3}) consists of
$k$ horizontal and $k$
vertical strips; we extend
this partition to the whole
$p$-th horizontal and the
whole $q$-th vertical strips
of $\tilde{A}$. If new $k$
divisions pass through the
marked block
$\tilde{A}_{ij}$, we carry
out $k$ perpendicular
divisions such that
$\tilde{A}_{ij}$ is
partitioned into $k\times k$
subblocks with  square
diagonal blocks (they are
crossed by the marking line)
and repeat this for all new
divisions. We additionally
mark the subblocks
$a_1I,\dots,a_{k-1}I$ of
$\tilde{A}_{pq}$ if it has
the form (\ref{un-2.2}). The
obtained marked block matrix
$A'$ will be called the {\it
derived matrix} of $A$.
 Clearly, $A\sim B$ implies $A'\sim B'$.

      Let us consider the sequence of derived matrices
              \begin{equation}
  A^{(0)}:=A,\ A',\ A'',\dots, A^{(s)}.
  \label{un-2.3.1}
              \end{equation}
 This sequence ends with a certain matrix $A^{(s)},\ s\ge 0,$
 for which the admissible transformations do not change any of its blocks,
 i.e, $A^{(s)}$ is equivalent only to itself.
Then $A\sim B$ implies
$A^{(s)}\sim B^{(s)}$, i.e.,
$A^{(s)}=B^{(s)}$. Remove
from $A^{(s)}$ all additional
divisions into subblocks and
additional marking lines that
have appeared during the
reduction of $A$ to
$A^{(s)}$. The obtained
marked block matrix will be
called a {\it canonical
matrix} or the {\it canonical
form of} $A$ and will be
denoted by
$A^{\infty}$.\label{page_un2}
We have the following
\medskip

\begin{theorem}\label{th_un2.1.3}
{\it Each marked block matrix
$A$ is equivalent to the
uniquely determined canonical
matrix $A^{\infty}$;
moreover, $A\sim B$ if and
only if
$A^{\infty}=B^{\infty}$.}\eprf
\end{theorem}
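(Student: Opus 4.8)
The plan is to read off the theorem from three facts established above: (i)~by Lemmas~\ref{lem_un2.1.1}(b) and~\ref{lem_un2.1.2}(b), the canonical part produced at each step of the algorithm is uniquely determined by the pivot block; (ii)~$A\sim B$ implies $A'\sim B'$; and (iii)~the sequence~\eqref{un-2.3.1} stabilizes at a matrix $A^{(s)}$ whose equivalence class is $\{A^{(s)}\}$. I would split the claim into \emph{existence} ($A\sim A^{\infty}$), \emph{invariance} ($A\sim B\Rightarrow A^{\infty}=B^{\infty}$), and the \emph{converse} ($A^{\infty}=B^{\infty}\Rightarrow A\sim B$). The converse is immediate: since $\sim$ is an equivalence relation, $A\sim A^{\infty}=B^{\infty}\sim B$ gives $A\sim B$. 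So only existence and invariance need work.

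For existence: each passage $A^{(i)}\leadsto A^{(i+1)}$ is an admissible transformation $(R,S):A^{(i)}\leadsto\tilde A^{(i)}$ that brings the current pivot to the form~\eqref{un-2.2} or~\eqref{un-2.3}, followed by the cosmetic insertion of new divisions and marking lines. Since the algorithm subdivides a marked block so that its new diagonal subblocks are square and crossed by the marking line, and subdivides the opposite strip accordingly, every transformation admissible for the finer structure of $A^{(i+1)}$ is, after forgetting the new divisions, admissible for $A^{(i)}$, and inductively admissible for $A=A^{(0)}$. Composing the transformations of all $s$ steps (admissible transformations of a fixed block matrix form a group), the entries of $A^{(s)}$ arise from those of $A$ by a single admissible transformation of $A$. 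Reading $A^{(s)}$ with the original block structure and markings of $A$ --- which is precisely how $A^{\infty}$ is defined --- gives $A\sim A^{\infty}$.

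For invariance, suppose $A\sim B$. The point is that the algorithm runs on $A$ and $B$ in lockstep. First, the block that changes first occupies the same position $pq$ for both: the blocks preceding $A_{pq}$ are fixed by all admissible transformations, hence equal to the corresponding blocks of $B$ and likewise fixed; and $B_{pq}$ lies in the same unitary equivalence (respectively similarity) orbit as $A_{pq}$, which is not a single point. By~(i), the canonical forms of $\tilde A_{pq}$ and $\tilde B_{pq}$ share the same $k$ and the same strip sizes, so the divisions and marking lines adjoined in forming $A'$ and $B'$ coincide; thus $A'$ and $B'$ carry identical block structure and marking patterns, and $A'\sim B'$ by~(ii). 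Iterating, $A^{(i)}$ and $B^{(i)}$ are structurally identical and $A^{(i)}\sim B^{(i)}$ for all $i$; with the common stabilization index $s$ from~(iii), $B^{(s)}\sim A^{(s)}$ forces $B^{(s)}=A^{(s)}$, and deleting the (identical) adjoined divisions and markings yields $A^{\infty}=B^{\infty}$. Taking $B=A$ but running the algorithm with two different sets of internal choices, the same argument shows $A^{\infty}$ is independent of those choices, hence well defined.

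The step I expect to be the main obstacle is the lockstep bookkeeping just sketched: one must check at every stage that the pivots of $A^{(i)}$ and $B^{(i)}$ sit at the same position, carry the same marking (so either both are reduced by equivalence or both by similarity), and have equal canonical parts, so that $A^{(i)}$ and $B^{(i)}$ stay structurally identical and~(ii) may legitimately be iterated with a common $s$. Termination of~\eqref{un-2.3.1} is the remaining point to pin down: each nontrivial step strictly refines the combinatorial structure by adjoining a new division or a new marking line, and the matrix has fixed finite size, so only finitely many steps are nontrivial; once $A^{(i)}$ is equivalent only to itself, $A^{(i+1)}=A^{(i)}$, so the sequence is eventually constant and reaches $A^{(s)}$.
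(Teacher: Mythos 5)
Your proposal is correct and follows exactly the route the paper intends: the paper states the theorem with no separate proof, taking it to be an immediate consequence of the preceding construction (the derived sequence \eqref{un-2.3.1} terminates, $A\sim B$ implies $A'\sim B'$, and $A^{(s)}$ is equivalent only to itself, so $A^{(s)}=B^{(s)}$). Your existence/invariance/converse decomposition and the lockstep bookkeeping simply make explicit what the paper leaves implicit.
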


                                                         \medskip

We will take under
consideration the null
matrices $0_{0n},\ 0_{m0},$
and $0_{00}$ of size $0\times
n,\ m\times 0,$ and $0\times
0$, putting for a $p\times q$
matrix $M$
$$
M\oplus 0_{0n}=
         \left[ \begin{array}{cc}
                       M &  0_{pn}
                 \end{array} \right],\ \ \
M\oplus 0_{m0}=
         \left[ \begin{array}{c}
                       M        \\
                       0_{mq}
                 \end{array} \right],\ \ \
$$
$0_{m0}\oplus 0_{0n}=0_{mn}.$
Respectively, we will
consider block matrices with
``empty'' horizontal and/or
vertical strips.

     Let $A=[A_{ij}]$ and $B=[B_{ij}]\ (1\le i\le l,\
1\le j\le r)$ be marked block
matrices with the same set of
indices $(i,j)$ of the marked
blocks. By the {\it block
direct sum} of $A$ and $B$ we
mean the marked block matrix
$$
A\uplus B:=[A_{ij}\oplus
B_{ij}]
$$
with the same disposition of
marked blocks. If
$$T_1=(R_1,S_1): A\leadsto
C$$ and $$T_2=(R_2,S_2):
B\leadsto D$$ (see
(\ref{un-1.3})), then $R_1,
R_2$ and, respectively,
$S_1,S_2$ are block diagonal
matrices with $l$ and,
respectively, $r$ diagonal
square blocks, and
$$
T_1\uplus T_2:=(R_1\uplus
R_2, S_1\uplus S_2): A\uplus
B\leadsto C\uplus D.
$$

A marked block matrix $A$ is
said to be {\it
indecomposable} if
\begin{itemize}
  \item[(i)] its
size other than $0\times 0$,
and
  \item[(ii)] $A\sim B\uplus C$
implies that $B$ or $C$ has
size $0\times 0$.
\end{itemize}

For every matrices
$M_1,\dots, M_n,\,N,$ we
define
                  \begin{equation}
(M_1,\dots, M_n)\otimes
N:=(M_1\otimes N,\dots,
M_n\otimes N), \label{un-2.4}
                   \end{equation}
where $M_i\otimes N$ is
obtained from $M_i$ by
replacing its entries $a$
with $aN$.

\begin{theorem}\label{th_un2.1.4}
{\rm(a)} {\it Each marked
block matrix $A$ is
equivalent to a matrix of the
form}
\begin{align*}
B&=(P_1\otimes
I_{m_1})\uplus\dots\uplus(P_t\otimes
I_{m_t})\\
   &\sim\underbrace{P_1\uplus\dots\uplus P_1}_{\mbox{$m_1$ copies}}
\uplus\dots\uplus
\underbrace{P_t\uplus\dots\uplus
P_t}_{\mbox{$m_t$ copies}}\,,
\end{align*}
{\it where $P_1,\dots, P_t$
are nonequivalent
indecomposable marked block
matrices, uniquely determined
up to equivalence $($we may
take $P_1=P_1^{\infty},\dots,
P_t=P_t^{\infty})$, and
$m_1,\dots,m_t$ are uniquely
determined natural numbers.
Every admissible
transformation $T:B\leadsto
B$ that preserves $B$ has the
form
$$
T=({\bf 1}_{P_1}\otimes
U_1)\uplus\dots\uplus({\bf
1}_{P_t}\otimes U_t),
$$
where ${\bf 1}_{P_i}=(I,I):
P_i\leadsto P_i$ is the
identity transformation of
$P_i$, and $U_i$ is a unitary
$m_i\times m_i$ matrix $(1\le
i\le t)$}.

      {\rm(b)} {\it A marked block matrix $A$ of size $\ne 0\times 0$
is indecomposable if and only
if every preserving it
admissible transformation
$T:A\leadsto A$ has the form
$T=a{\bf 1}_A,\ a\in{\mathbb
C},\ |a|=1$.}

      {\rm(c)} {\it A canonical matrix can be reduced to an equivalent block
direct sum of indecomposable
canonical matrices using only
admissible permutations of
rows and columns.}
\end{theorem}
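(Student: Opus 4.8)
The plan is to establish (b) first by a direct eigenspace argument, deduce the existence half of (a) by an easy induction, and then extract the description of the preserving transformations, the uniqueness in (a), and the whole of (c) from the structure of the rigid derived matrix $A^{(s)}$. For (b), the ``if'' direction is immediate: given $A\sim B\uplus C$ with $B$ and $C$ both of size $\ne 0\times 0$, fix $\varepsilon$ with $|\varepsilon|=1$, $\varepsilon\ne 1$, and on the $i$-th row strip put $R_i=I\oplus\varepsilon I$ (identity on the $B$-part, $\varepsilon I$ on the $C$-part), and on the $j$-th column strip put $S_j=I\oplus\varepsilon I$ similarly; since $B$ and $C$ separately satisfy the marking condition so do $R=\bigoplus R_i$ and $S=\bigoplus S_j$, while $R^{-1}(B\uplus C)S=B\uplus C$ and $(R,S)\ne a\mathbf 1$, so transporting this transformation back along $A\sim B\uplus C$ produces a non-scalar preserving transformation of $A$. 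For the ``only if'' direction, let $(R,S)$ preserve $A$, so $A_{ij}S_j=R_iA_{ij}$ for all $i,j$; decomposing the $i$-th row space orthogonally into the eigenspaces of the unitary operator $R_i$ and the $j$-th column space into the eigenspaces of $S_j$, the intertwining relations force each $A_{ij}$ to carry the $\mu$-eigenspace of $S_j$ into the $\mu$-eigenspace of $R_i$, and the marking condition $R_i=S_j$ makes the two eigenspace gradings agree at each marked position. Hence $A$ is, in eigen-adapted orthonormal bases, the block direct sum of its eigen-restrictions, so $A$ is equivalent to that block direct sum; if $A$ is indecomposable all but one summand has size $0\times 0$, which forces every $R_i$ and every $S_j$ to have a single eigenvalue $a$, i.e.\ $(R,S)=a\mathbf 1_A$ with $|a|=1$.

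For the existence part of (a), induct on the total size of $A$: if $A$ is indecomposable take $t=1$, $P_1=A^\infty$, $m_1=1$; otherwise $A\sim B\uplus C$ with both summands nonzero and strictly smaller, so $A$ is equivalent to a block direct sum of indecomposables. Collect $\sim$-equivalent summands (if $Q\sim P$ then $Q\uplus Y\sim P\uplus Y$, via the admissible transformation acting as the equivalence on the $Q$-block and as the identity elsewhere), and use the tiling permutation of rows and columns inside each strip to rewrite $m_i$ copies of $P_i$ as $P_i\otimes I_{m_i}$; this gives the stated form, with $P_i=P_i^\infty$.

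For the remaining assertions, run the algorithm on $A^\infty$ and examine the last derived matrix $A^{(s)}$, none of whose blocks changes under admissible transformations. A preserving transformation of $A^\infty$ preserves the canonical part at every step of the reduction, hence by Lemmas \ref{lem_un2.1.1}(b) and \ref{lem_un2.1.2}(b) descends to an admissible transformation of $A^{(s)}$; conversely, because $A^{(s)}$ is rigid every one of its admissible transformations preserves it and, read with the coarse block structure, is a preserving transformation of $A^\infty$; so $\mathrm{Aut}_{\sim}(A)\cong\mathrm{Aut}_{\sim}(A^\infty)$ is isomorphic to the group of all admissible transformations of $A^{(s)}$. Again by those two lemmas, rigidity forces the refined block structure of $A^{(s)}$ to be block-diagonal along the connected components of its marking graph: the strips in one component all share a common size $n_\alpha$, every block inside a component is a scalar multiple of $I_{n_\alpha}$, and every block joining distinct components vanishes. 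An admissible permutation making the components contiguous — one that descends to an admissible permutation of $A^\infty$ — presents $A^{(s)}$ as $\biguplus_\alpha(c^\alpha\otimes I_{n_\alpha})$ with $c^\alpha$ the scalar matrix of component $\alpha$; the marking graph of $c^\alpha$ is connected, so its only preserving transformations are the $U(1)$-scalars, whence $c^\alpha$ is indecomposable by (b), the $c^\alpha$ are pairwise non-equivalent (distinct components have distinct canonical scalar matrices, by the uniqueness of the canonical form, Theorem \ref{th_un2.1.3}), and the admissible transformations of $A^{(s)}$ are exactly $\prod_\alpha U(n_\alpha)$, acting as $\biguplus_\alpha(\mathbf 1_{c^\alpha}\otimes U_\alpha)$. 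Deleting the auxiliary subdivisions and markings turns the $c^\alpha$ into indecomposable canonical matrices — this is (c) — and presents $A^\infty$ as $\biguplus_i(P_i\otimes I_{m_i})$ with $P_i=P_i^\infty$ pairwise non-equivalent indecomposable, every preserving transformation of the form $\biguplus_i(\mathbf 1_{P_i}\otimes U_i)$ with $U_i\in U(m_i)$, and $\mathrm{Aut}_\sim(A)\cong\prod_iU(m_i)$. Uniqueness of $\{(P_i,m_i)\}$ then follows: the isomorphism type of the compact group $\mathrm{Aut}_\sim(A\uplus P)$ depends only on $A$ and $P$ up to $\sim$, and the above computation gives $\dim\mathrm{Aut}_\sim(A\uplus P)-\dim\mathrm{Aut}_\sim(A)=2m_i+1$ when $P\sim P_i$ and $=1$ when $P$ is equivalent to no $P_i$, so the list of equivalence classes occurring in $A$ and their multiplicities are determined by $A$.

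The main obstacle is the structural analysis of $A^{(s)}$ in the last paragraph: deducing, from the single fact that no block of $A^{(s)}$ changes, the component-block-diagonal shape with scalar blocks — this is where Lemmas \ref{lem_un2.1.1}(b) and \ref{lem_un2.1.2}(b) have to be threaded through the whole reduction — together with the two bookkeeping points that accompany it, namely that the ``group by component'' permutation of the refined strips really descends to an admissible permutation of the coarse canonical matrix, and that distinct components yield non-equivalent indecomposable pieces (where the uniqueness of $A^\infty$ from Theorem \ref{th_un2.1.3} is used).
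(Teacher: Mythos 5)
Your proposal is correct and, at its core, follows the same strategy as the paper: both arguments rest on the observation that the preserving admissible transformations of $A^{\infty}$ coincide with the admissible transformations of the terminal derived matrix $A^{(s)}$, and on reading off from the rigidity of $A^{(s)}$ (via Lemmas \ref{lem_un2.1.1}(b) and \ref{lem_un2.1.2}(b)) that its refined blocks are scalar multiples of identities within each class of marking-identified substrips and zero between classes, after which sorting the substrips yields $\biguplus_i(P_i\otimes I_{m_i})$. The difference is one of completeness: the paper disposes of (b) and (c) with ``obvious'' and of the uniqueness with ``clearly,'' whereas you supply genuine arguments --- an eigenspace decomposition of the unitary matrices $R_i$, $S_j$ for (b) (the standard and correct proof), a separate induction for the existence half of (a) (redundant given your later analysis of $A^{(s)}$, but harmless), and a Krull--Schmidt-style uniqueness argument via $\dim\mathrm{Aut}_{\sim}(A\uplus P)-\dim\mathrm{Aut}_{\sim}(A)$, which is a nice invariant-theoretic route the paper does not take. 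One justification I would tighten: you assert that the pieces $c^{\alpha}$ coming from distinct components are pairwise non-equivalent ``by the uniqueness of the canonical form''; as stated this only rules out their being equal, not equivalent, and it presupposes that each piece is itself canonical. The clean argument is already implicit in your own computation: if $P_k\sim P_{k'}$ for $k\ne k'$, then $A^{\infty}$ would admit preserving transformations mixing the two summands (the automorphism group of $P\uplus P$ for indecomposable $P$ contains $U(2)$, not just $U(1)\times U(1)$), contradicting your exact description of the preserving group as $\prod_k U(n_k)$ acting componentwise. With that repair the proof is complete and, if anything, more rigorous than the published one.
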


\begin{proof}
(a) We may take
$A=A^{\infty}$. Since
admissible transformations
with $A^{(i)},\ 1\le i\le s$,
(see (\ref{un-2.3.1})) are
exactly the admissible
transformations with $A$ that
preserve the already reduced
part of $A^{(i)}$ (preserve
$A^{(s)}$ if $i=s$), the set
of admissible transformations
with $A^{(s)}$ consists of
all $(R,S): A\leadsto A$. By
(\ref{un-1.2}),
$$R=R_1\oplus\dots\oplus
R_l,\qquad S=
S_1\oplus\dots\oplus S_r,$$
where $l\times r$ is the
number of blocks of $A$.
Since $(R,S): A^{(s)}\leadsto
A^{(s)}$, we have
\begin{equation}
R_i=U_{f(i,1)}\oplus\dots\oplus
U_{f(i,l_i)},\qquad
S_j=U_{g(j,1)}\oplus\dots\oplus
U_{g(j,r_j)}, \label{un-2*}
\end{equation}
where
$$f(i,\alpha),\,g(j,\beta)\in\{1,\dots,t\}$$
and $U_1,\dots,U_t$ are
arbitrary unitary matrices of
fixed sizes. $A^{(s)}$
differs from $A$ only by
additional divisions of its
strips into substrips (and by
additional marking lines). We
transpose substrips within
each strip of $A$ to obtain a
matrix $B\sim A$ such that,
for all $(R,S): B\leadsto B,$
we have (\ref{un-2*}) with
$$f(i,1)\le\dots\le
f(i,l_i),\qquad
g(j,1)\le\dots\le g(j,r_j).$$
Clearly, $B$ satisfies (a).

        (b)$\&$(c) These statemants
         are obvious.
\end{proof}


\subsection{The structure of canonical matrices}
\label{ss_un2.2}

In this section we divide the
set of canonical $m\times n$
matrices into disjoint
subsets of canonical matrices
with the same ``scheme'' (the
number of such schemes is
finite for each size $m\times
n$), and show how to
construct all the canonical
matrices with a given scheme
(for matrices under unitary
similarity this was made
briefly in \cite{a_un4}).
\medskip

We partition a canonical
matrix into zones, which
illustrate the reduction
process.

     Let $A=A^{\infty}$ be a canonical matrix. Then all its derived
matrices (\ref{un-2.3.1})
differ from $A$ only by
additional divisions and
marking lines. Denote by
$P_l\ (0\le l<s)$ the first
block of $A^{(l)}$ that
changes under admissible
transformations (it is
reduced when we construct
$A^{(l+1)}$).

     Let $A^{(l)}_{ij}$ be a block of $A^{(l)}$ such that either $A^{(l)}_{ij}\le
P_l$ or $l=s$. The admissible
transformations with
$A^{(l)}$ induce the unitary
equivalence or similarity
transformations with
$A^{(l)}_{ij}$. Respectively,
$A^{(l)}_{ij}$ has the form
(\ref{un-2.2}) or
(\ref{un-2.3}); we denote by
$Z(A^{(l)}_{ij})$ its
canonical part (see page
\pageref{page_un2.1.3}).
 Defining by induction in $l$, we call $Z(A^{(l)}_{ij})$ by a
{\it zone} and $l$ by its
{\it depth} if either $l=0$
or $Z(A^{(l)}_{ij})$ is not
contained in a zone of depth
$<l$.

     For each zone $Z=Z(A^{(l)}_{ij})$, we put
$\mbox{Bl}(Z):=A^{(l)}_{ij}$
and call $Z$ by an {\it
equivalence} ({\it
similarity}) {\it zone} if
Bl$(Z)$ is transformed by
unitary equivalence
(similarity)
transformations.\label{page_un_uyy}

     Clearly, every canonical matrix $A$ is partitioned into equivalence
and similarity zones; for
example (for a marked block
matrix of the form
\unitlength 1mm
\begin{picture}(10,5)(0,1.5)
\put(0,0){\framebox(10,5)[cc]{}}
\put(5,0){\line(0,1){5}}
\put(0,5){\line(1,-1){5}}
\end{picture}
),\newpage
\[
\unitlength 0.82mm
\begin{picture}(130.00,17.00)(35,33)
{\linethickness{1pt}
\put(70.00,8.00){\framebox(60.00,40.00)[cc]{}}
\put(110.00,48.00){\line(0,-1){40.00}}
}
\put(70,48){\line(1,-1){40}}
\put(90.00,48.00){\line(0,-1){20.00}}
\put(90.00,28.00){\line(1,0){40.00}}
\put(110.00,33.00){\line(1,0){20.00}}
\put(125.00,28.00){\line(0,1){20.00}}
\put(120.00,48.00){\line(0,-1){10.00}}
\put(120.00,38.00){\line(1,0){10.00}}
\put(125.00,43.00){\line(1,0){5.00}}
\put(72.50,45.50){\makebox(0,0)[cc]{$i$}}
\put(77.50,45.50){\makebox(0,0)[cc]{0}}
\put(82.50,45.50){\makebox(0,0)[cc]{0}}
\put(87.50,45.50){\makebox(0,0)[cc]{0}}
\put(92.50,45.50){\makebox(0,0)[cc]{2}}
\put(97.50,45.50){\makebox(0,0)[cc]{0}}
\put(102.50,45.50){\makebox(0,0)[cc]{0}}
\put(107.50,45.50){\makebox(0,0)[cc]{0}}
\put(112.50,45.50){\makebox(0,0)[cc]{3}}
\put(117.50,45.50){\makebox(0,0)[cc]{0}}
\put(122.50,45.50){\makebox(0,0)[cc]{2}}
\put(127.50,45.50){\makebox(0,0)[cc]{$i$}}
\put(127.50,40.50){\makebox(0,0)[cc]{0}}
\put(122.50,40.50){\makebox(0,0)[cc]{0}}
\put(117.50,40.50){\makebox(0,0)[cc]{3}}
\put(112.50,40.50){\makebox(0,0)[cc]{0}}
\put(107.50,40.50){\makebox(0,0)[cc]{0}}
\put(102.50,40.50){\makebox(0,0)[cc]{0}}
\put(97.50,40.50){\makebox(0,0)[cc]{2}}
\put(92.50,40.50){\makebox(0,0)[cc]{0}}
\put(87.50,40.50){\makebox(0,0)[cc]{0}}
\put(82.50,40.50){\makebox(0,0)[cc]{0}}
\put(77.50,40.50){\makebox(0,0)[cc]{$i$}}
\put(72.50,40.50){\makebox(0,0)[cc]{0}}
\put(72.50,35.50){\makebox(0,0)[cc]{0}}
\put(77.50,35.50){\makebox(0,0)[cc]{0}}
\put(82.50,35.50){\makebox(0,0)[cc]{$i$}}
\put(87.50,35.50){\makebox(0,0)[cc]{0}}
\put(92.50,35.50){\makebox(0,0)[cc]{0}}
\put(97.50,35.50){\makebox(0,0)[cc]{0}}
\put(102.50,35.50){\makebox(0,0)[cc]{2}}
\put(107.50,35.50){\makebox(0,0)[cc]{0}}
\put(112.50,35.50){\makebox(0,0)[cc]{0}}
\put(117.50,35.50){\makebox(0,0)[cc]{0}}
\put(122.50,35.50){\makebox(0,0)[cc]{$i$}}
\put(127.50,35.50){\makebox(0,0)[cc]{4}}
\put(72.50,30.50){\makebox(0,0)[cc]{0}}
\put(77.50,30.50){\makebox(0,0)[cc]{0}}
\put(82.50,30.50){\makebox(0,0)[cc]{0}}
\put(87.50,30.50){\makebox(0,0)[cc]{$i$}}
\put(92.50,30.50){\makebox(0,0)[cc]{0}}
\put(97.50,30.50){\makebox(0,0)[cc]{0}}
\put(102.50,30.50){\makebox(0,0)[cc]{0}}
\put(107.50,30.50){\makebox(0,0)[cc]{0}}
\put(112.50,30.50){\makebox(0,0)[cc]{0}}
\put(117.50,30.50){\makebox(0,0)[cc]{0}}
\put(122.50,30.50){\makebox(0,0)[cc]{0}}
\put(127.50,30.50){\makebox(0,0)[cc]{4}}
\put(72.50,25.50){\makebox(0,0)[cc]{0}}
\put(77.50,25.50){\makebox(0,0)[cc]{0}}
\put(82.50,25.50){\makebox(0,0)[cc]{0}}
\put(87.50,25.50){\makebox(0,0)[cc]{0}}
\put(92.50,25.50){\makebox(0,0)[cc]{0}}
\put(97.50,25.50){\makebox(0,0)[cc]{0}}
\put(102.50,25.50){\makebox(0,0)[cc]{0}}
\put(107.50,25.50){\makebox(0,0)[cc]{0}}
\put(112.50,25.50){\makebox(0,0)[cc]{5}}
\put(117.50,25.50){\makebox(0,0)[cc]{0}}
\put(122.50,25.50){\makebox(0,0)[cc]{0}}
\put(127.50,25.50){\makebox(0,0)[cc]{0}}
\put(72.50,20.50){\makebox(0,0)[cc]{0}}
\put(77.50,20.50){\makebox(0,0)[cc]{0}}
\put(82.50,20.50){\makebox(0,0)[cc]{0}}
\put(87.50,20.50){\makebox(0,0)[cc]{0}}
\put(92.50,20.50){\makebox(0,0)[cc]{0}}
\put(97.50,20.50){\makebox(0,0)[cc]{0}}
\put(102.50,20.50){\makebox(0,0)[cc]{0}}
\put(107.50,20.50){\makebox(0,0)[cc]{0}}
\put(112.50,20.50){\makebox(0,0)[cc]{0}}
\put(117.50,20.50){\makebox(0,0)[cc]{5}}
\put(122.50,20.50){\makebox(0,0)[cc]{0}}
\put(127.50,20.50){\makebox(0,0)[cc]{0}}
\put(72.50,15.50){\makebox(0,0)[cc]{0}}
\put(77.50,15.50){\makebox(0,0)[cc]{0}}
\put(82.50,15.50){\makebox(0,0)[cc]{0}}
\put(87.50,15.50){\makebox(0,0)[cc]{0}}
\put(92.50,15.50){\makebox(0,0)[cc]{0}}
\put(97.50,15.50){\makebox(0,0)[cc]{0}}
\put(102.50,15.50){\makebox(0,0)[cc]{0}}
\put(107.50,15.50){\makebox(0,0)[cc]{0}}
\put(112.50,15.50){\makebox(0,0)[cc]{0}}
\put(117.50,15.50){\makebox(0,0)[cc]{0}}
\put(122.50,15.50){\makebox(0,0)[cc]{5}}
\put(127.50,15.50){\makebox(0,0)[cc]{0}}
\put(72.50,10.50){\makebox(0,0)[cc]{0}}
\put(77.50,10.50){\makebox(0,0)[cc]{0}}
\put(82.50,10.50){\makebox(0,0)[cc]{0}}
\put(87.50,10.50){\makebox(0,0)[cc]{0}}
\put(92.50,10.50){\makebox(0,0)[cc]{0}}
\put(97.50,10.50){\makebox(0,0)[cc]{0}}
\put(102.50,10.50){\makebox(0,0)[cc]{0}}
\put(107.50,10.50){\makebox(0,0)[cc]{0}}
\put(112.50,10.50){\makebox(0,0)[cc]{0}}
\put(117.50,10.50){\makebox(0,0)[cc]{0}}
\put(122.50,10.50){\makebox(0,0)[cc]{0}}
\put(127.50,10.50){\makebox(0,0)[cc]{3}}
\put(65.00,28.00){\makebox(0,0)[cc]{=}}
\put(60.00,28.5){\makebox(0,0)[cc]{$A$}}  
\end{picture}
\]\\[20mm]
\begin{equation}
\unitlength 0.82mm
\linethickness{0.4pt}
\begin{picture}(130.00,17.00)(20,33)
\put(70.00,8.00){\framebox(60.00,40.00)[cc]{}}
\put(110.00,48.00){\line(0,-1){40.00}}
\put(90.00,28.00){\line(1,0){40.00}}
\put(110.00,33.00){\line(1,0){20.00}}
\put(125.00,28.00){\line(0,1){20.00}}
\put(120.00,48.00){\line(0,-1){10.00}}
\put(120.00,38.00){\line(1,0){10.00}}
\put(125.00,43.00){\line(1,0){5.00}}
\put(127.50,45.50){\makebox(0,0)[cc]{\it
8}}
\put(127.50,40.50){\makebox(0,0)[cc]{\it
8}}
\put(127.50,35.50){\makebox(0,0)[cc]{\it
6}}
\put(127.50,30.50){\makebox(0,0)[cc]{\it
4}}
\put(65.00,28.00){\makebox(0,0)[cc]{=}}
\put(90.00,28.00){\line(0,1){20.00}}
\put(80.00,18.00){\makebox(0,0)[cc]{\it
1}}
\put(120.00,18.00){\makebox(0,0)[cc]{\it
2}}
\put(100.00,38.00){\makebox(0,0)[cc]{\it
3}}
\put(118.33,30.67){\makebox(0,0)[cc]{\it
4}}
\put(115.33,36.33){\makebox(0,0)[cc]{\it
5}}
\put(122.33,43.00){\makebox(0,0)[cc]{\it
7}}
\end{picture}
\label{un-2.7}
\end{equation}
\\[15mm]
\noindent is partitioned into
10 zones, their depths are
indicated on the right of
(\ref{un-2.7}).
\medskip

Let $A$ be a canonical matrix
partitioned into zones. For
each similarity zone, we
replace all its diagonal
elements by stars. For each
equivalence zone, we replace
all its nonzero elements by
circles, and join with a line
its circles corresponding to
equal elements (this line
does not coincide with a
marking line because the
marking lines connect stars).
The other elements of $A$ are
zeros, we replace theirs by
points. The obtained picture
will be called the {\it
scheme} ${\cal S}(A)$ of $A$.

    For example, the canonical matrix
    (\ref{un-2.7}) has the scheme
$$
\unitlength 1.00mm
\linethickness{0.4pt}
\begin{picture}(130.00,40.17)(40,10)
{\linethickness{1pt}
\put(70.00,10.17){\framebox(60.00,40.00)[cc]{}}
\put(110.00,50.17){\line(0,-1){40.00}}
}
\put(90.00,50.17){\line(0,-1){20.00}}
\put(90.00,30.17){\line(1,0){40.00}}
\put(110.00,35.17){\line(1,0){20.00}}
\put(125.00,30.17){\line(0,1){20.00}}
\put(120.00,50.17){\line(0,-1){10.00}}
\put(120.00,40.17){\line(1,0){10.00}}
\put(125.00,45.17){\line(1,0){5.00}}
\put(72.50,47.67){\makebox(0,0)[cc]{$\star$}}
\put(77.50,47.67){\makebox(0,0)[cc]{$\cdot$}}
\put(82.50,47.67){\makebox(0,0)[cc]{$\cdot$}}
\put(87.50,47.67){\makebox(0,0)[cc]{$\cdot$}}
\put(97.50,47.67){\makebox(0,0)[cc]{$\cdot$}}
\put(102.50,47.67){\makebox(0,0)[cc]{$\cdot$}}
\put(107.50,47.67){\makebox(0,0)[cc]{$\cdot$}}
\put(117.50,47.67){\makebox(0,0)[cc]{$\cdot$}}
\put(122.50,47.67){\makebox(0,0)[cc]{$\bullet$}}
\put(127.50,47.67){\makebox(0,0)[cc]{$\star$}}
\put(127.50,42.67){\makebox(0,0)[cc]{$\cdot$}}
\put(122.50,42.67){\makebox(0,0)[cc]{$\cdot$}}
\put(112.50,42.67){\makebox(0,0)[cc]{$\cdot$}}
\put(107.50,42.67){\makebox(0,0)[cc]{$\cdot$}}
\put(102.50,42.67){\makebox(0,0)[cc]{$\cdot$}}
\put(97.50,42.67){\makebox(0,0)[cc]{$\bullet$}}
\put(92.50,42.67){\makebox(0,0)[cc]{$\cdot$}}
\put(87.50,42.67){\makebox(0,0)[cc]{$\cdot$}}
\put(82.50,42.67){\makebox(0,0)[cc]{$\cdot$}}
\put(77.50,42.67){\makebox(0,0)[cc]{$\star$}}
\put(72.50,42.67){\makebox(0,0)[cc]{$\cdot$}}
\put(72.50,37.67){\makebox(0,0)[cc]{$\cdot$}}
\put(77.50,37.67){\makebox(0,0)[cc]{$\cdot$}}
\put(82.50,37.67){\makebox(0,0)[cc]{$\star$}}
\put(87.50,37.67){\makebox(0,0)[cc]{$\cdot$}}
\put(92.50,37.67){\makebox(0,0)[cc]{$\cdot$}}
\put(97.50,37.67){\makebox(0,0)[cc]{$\cdot$}}
\put(102.50,37.67){\makebox(0,0)[cc]{$\bullet$}}
\put(107.50,37.67){\makebox(0,0)[cc]{$\cdot$}}
\put(112.50,37.67){\makebox(0,0)[cc]{$\cdot$}}
\put(117.50,37.67){\makebox(0,0)[cc]{$\cdot$}}
\put(122.50,37.67){\makebox(0,0)[cc]{$\star$}}
\put(72.50,32.67){\makebox(0,0)[cc]{$\cdot$}}
\put(77.50,32.67){\makebox(0,0)[cc]{$\cdot$}}
\put(82.50,32.67){\makebox(0,0)[cc]{$\cdot$}}
\put(87.50,32.67){\makebox(0,0)[cc]{$\star$}}
\put(92.50,32.67){\makebox(0,0)[cc]{$\cdot$}}
\put(97.50,32.67){\makebox(0,0)[cc]{$\cdot$}}
\put(102.50,32.67){\makebox(0,0)[cc]{$\cdot$}}
\put(107.50,32.67){\makebox(0,0)[cc]{$\cdot$}}
\put(112.50,32.67){\makebox(0,0)[cc]{$\cdot$}}
\put(117.50,32.67){\makebox(0,0)[cc]{$\cdot$}}
\put(122.50,32.67){\makebox(0,0)[cc]{$\cdot$}}
\put(72.50,27.67){\makebox(0,0)[cc]{$\cdot$}}
\put(77.50,27.67){\makebox(0,0)[cc]{$\cdot$}}
\put(82.50,27.67){\makebox(0,0)[cc]{$\cdot$}}
\put(87.50,27.67){\makebox(0,0)[cc]{$\cdot$}}
\put(97.50,27.67){\makebox(0,0)[cc]{$\cdot$}}
\put(102.50,27.67){\makebox(0,0)[cc]{$\cdot$}}
\put(107.50,27.67){\makebox(0,0)[cc]{$\cdot$}}
\put(112.50,27.67){\makebox(0,0)[cc]{$\bullet$}}
\put(117.50,27.67){\makebox(0,0)[cc]{$\cdot$}}
\put(122.50,27.67){\makebox(0,0)[cc]{$\cdot$}}
\put(127.50,27.67){\makebox(0,0)[cc]{$\cdot$}}
\put(72.50,22.67){\makebox(0,0)[cc]{$\cdot$}}
\put(77.50,22.67){\makebox(0,0)[cc]{$\cdot$}}
\put(82.50,22.67){\makebox(0,0)[cc]{$\cdot$}}
\put(87.50,22.67){\makebox(0,0)[cc]{$\cdot$}}
\put(92.50,22.67){\makebox(0,0)[cc]{$\cdot$}}
\put(102.50,22.67){\makebox(0,0)[cc]{$\cdot$}}
\put(107.50,22.67){\makebox(0,0)[cc]{$\cdot$}}
\put(112.50,22.67){\makebox(0,0)[cc]{$\cdot$}}
\put(117.50,22.67){\makebox(0,0)[cc]{$\bullet$}}
\put(122.50,22.67){\makebox(0,0)[cc]{$\cdot$}}
\put(127.50,22.67){\makebox(0,0)[cc]{$\cdot$}}
\put(72.50,17.67){\makebox(0,0)[cc]{$\cdot$}}
\put(77.50,17.67){\makebox(0,0)[cc]{$\cdot$}}
\put(82.50,17.67){\makebox(0,0)[cc]{$\cdot$}}
\put(87.50,17.67){\makebox(0,0)[cc]{$\cdot$}}
\put(92.50,17.67){\makebox(0,0)[cc]{$\cdot$}}
\put(97.50,17.67){\makebox(0,0)[cc]{$\cdot$}}
\put(107.50,17.67){\makebox(0,0)[cc]{$\cdot$}}
\put(112.50,17.67){\makebox(0,0)[cc]{$\cdot$}}
\put(117.50,17.67){\makebox(0,0)[cc]{$\cdot$}}
\put(122.50,17.67){\makebox(0,0)[cc]{$\bullet$}}
\put(127.50,17.67){\makebox(0,0)[cc]{$\cdot$}}
\put(72.50,12.67){\makebox(0,0)[cc]{$\cdot$}}
\put(77.50,12.67){\makebox(0,0)[cc]{$\cdot$}}
\put(82.50,12.67){\makebox(0,0)[cc]{$\cdot$}}
\put(87.50,12.67){\makebox(0,0)[cc]{$\cdot$}}
\put(92.50,12.67){\makebox(0,0)[cc]{$\cdot$}}
\put(97.50,12.67){\makebox(0,0)[cc]{$\cdot$}}
\put(102.50,12.67){\makebox(0,0)[cc]{$\cdot$}}
\put(112.50,12.67){\makebox(0,0)[cc]{$\cdot$}}
\put(117.50,12.67){\makebox(0,0)[cc]{$\cdot$}}
\put(122.50,12.67){\makebox(0,0)[cc]{$\cdot$}}
\put(127.50,12.67){\makebox(0,0)[cc]{$\bullet$}}
\put(112.50,47.67){\makebox(0,0)[cc]{$\star$}}
\put(92.50,47.67){\makebox(0,0)[cc]{$\bullet$}}
\put(92.50,27.67){\makebox(0,0)[cc]{$\star$}}
\put(97.50,22.67){\makebox(0,0)[cc]{$\star$}}
\put(102.50,17.67){\makebox(0,0)[cc]{$\star$}}
\put(107.50,12.67){\makebox(0,0)[cc]{$\star$}}
\put(117.50,42.67){\makebox(0,0)[cc]{$\star$}}
\put(127.50,37.67){\makebox(0,0)[cc]{$\bullet$}}
\put(127.50,32.67){\makebox(0,0)[cc]{$\bullet$}}
\put(65.00,30.17){\makebox(0,0)[cc]{=}}
\put(60.00,30.17){\makebox(0,0)[rc]{${\cal
S}(A)$}}
\multiput(70.00,50.17)(0.12,-0.12){334}{\line(0,-1){0.12}}
\multiput(92.50,47.67)(0.12,-0.12){84}{\line(0,-1){0.12}}
\multiput(112.50,27.67)(0.12,-0.12){84}{\line(0,-1){0.12}}
\end{picture}
$$                                

\begin{theorem}\label{th_un2.2.2}
{Each canonical matrix
$A=[a_{ij}]$ with a given
scheme ${\cal S}= [s_{ij}]$
can be constructed by
successive filling of its
zones by numbers starting
with the zones of greatest
depth as follows: Let $Z$ be
a zone of depth $d(Z)$ and
let all entries in zones of
depth $>d(Z)$ be replaced by
numbers. Then we replace all
points, circles, and stars of
$Z$, respectively, by zeros,
positive real numbers, and
complex numbers such that the
following conditions hold:

    $1)$ Let $s_{ij}$ and $s_{i+1,j+1}$ be circles in $Z$. Then
$a_{ij}=a_{i+1,j+1}$ if
$s_{ij}$ and $s_{i+1,j+1}$
are linked by a line, and
$a_{ij}>a_{i+1,j+1}$
otherwise.

$2)$ Let
$s_{\alpha,\beta},\dots,s_{\alpha+k,\beta+k}$
be all stars of $Z$ that lie
under a certain stair of $Z$.
Then
$$a_{\alpha,\beta}=\dots=a_{\alpha+k,\beta+k}.$$
If
$$s_{\alpha+k+1,\beta+k+1},\,\dots,\,s_{\alpha+t,\beta+t}$$
are all stars of $Z$ that lie
under the next stair of $Z$,
then $a_{\alpha,\beta}\succeq
a_{\alpha+t,\beta+t}$;
moreover,
$a_{\alpha,\beta}\succ
a_{\alpha+t,\beta+t}$
whenever the columns of the
block $$[\,
a_{ij}\,|\,\alpha\le i\le
\alpha+k,\beta+k+1\le j\le
\beta+t]$$ are linearly
dependent $($this block has
been filled by numbers
because all its entries are
located in zones of depth
$>d(Z))$.}\eprf \end{theorem}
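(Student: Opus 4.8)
The plan is to translate the defining constraints of the canonical forms \eqref{un-2.2} and \eqref{un-2.3} into the language of schemes, one zone at a time, and then to read the translation in both directions: for the ``only if'' part it suffices to record what each zone of a canonical matrix must look like, and for the ``if'' part we run the algorithm of Section~\ref{ss_un2.1} on a matrix built by a filling obeying $1)$ and $2)$ and check that the algorithm changes nothing.

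For the forward direction, fix a canonical matrix $A=A^{\infty}$ with scheme ${\cal S}$ and recall that a zone of depth $d$ is $Z=Z(A^{(d)}_{ij})$ with $\mathrm{Bl}(Z)=A^{(d)}_{ij}$ already of the form \eqref{un-2.2} (if $Z$ is an equivalence zone) or \eqref{un-2.3} (if $Z$ is a similarity zone). If $Z$ is an equivalence zone, then $\mathrm{Bl}(Z)=a_1I\oplus\dots\oplus a_{k-1}I\oplus 0$ with $a_1>\dots>a_{k-1}>0$; the nonzero entries --- the circles of $Z$ --- lie on a single diagonal of the block and split into the blocks $a_mI$, entries within one block being equal (hence linked in ${\cal S}$) and entries of consecutive blocks strictly decreasing, which is precisely $1)$. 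If $Z$ is a similarity zone, then its canonical part consists of the diagonal blocks $\lambda_1I,\dots,\lambda_kI$ (whose diagonals carry the stars, grouped by stairs) together with the zero blocks below them; here $\lambda_1\succeq\dots\succeq\lambda_k$, all stars under one stair equal $\lambda_m$, and the requirement in \eqref{un-2.3} that the columns of $F_{m,m+1}$ be linearly independent when $\lambda_m=\lambda_{m+1}$ is exactly the clause of $2)$ forbidding $\lambda_m=\lambda_{m+1}$ when those columns are linearly dependent. One uses here that the entries of $F_{m,m+1}$ are reduced only at steps later than $d$, so they lie in zones of depth $>d$ and are already filled when we reach $Z$. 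Running over all zones shows that every canonical matrix with scheme ${\cal S}$ arises from a filling obeying $1)$ and $2)$.

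For the converse, take any filling of ${\cal S}$ satisfying $1)$ and $2)$, let $A$ be the resulting matrix, and apply the algorithm, proving by induction on $l$ that the $l$-th derived matrix computed by the algorithm agrees with $A$ up to the additional divisions and marking lines and that its first changeable block $P_l$ is already of the form \eqref{un-2.2} or \eqref{un-2.3}. Indeed, the canonical part of $P_l$ is a zone $Z$ of depth $l$, and every entry of $P_l$ outside that canonical part sits in a zone of depth $>l$, hence was assigned at an earlier stage of the filling and is already present in $A$; the assumption that $1)$, respectively $2)$, holds for $Z$ then says precisely that the singular values $a_m$ of $P_l$ strictly decrease, respectively that $\lambda_1\succeq\dots\succeq\lambda_k$ and that the columns of $F_{m,m+1}$ are linearly independent whenever $\lambda_m=\lambda_{m+1}$ --- that is, that $P_l$ is in canonical form. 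So the reduction step changes no entry and only refines the partition; the process terminates after finitely many steps with $A^{(s)}$ equivalent only to itself, whence $A^{\infty}=A$, so $A$ is canonical, and clearly ${\cal S}(A)={\cal S}$.

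The real work lies in the dictionary between the combinatorics of a scheme and the matrix forms \eqref{un-2.2}, \eqref{un-2.3}. One has to verify that a single application of Lemmas~\ref{lem_un2.1.1}(b) and \ref{lem_un2.1.2}(b) refines the ambient block partition in exactly the way that places the ``non-canonical'' surroundings of a zone --- in particular the blocks $F_{m,m+1}$ appearing in $2)$ --- into strictly deeper zones, so that the prescribed filling order is well posed and $2)$ only ever refers to entries already determined. Once this is pinned down, the identifications of ``circles linked by a line'' with the blocks $a_mI$, of ``stars under one stair'' with the diagonal blocks $\lambda_mI$, and of the linear-dependence clause of $2)$ with the negation of the independence condition in \eqref{un-2.3} make both inductions above routine.
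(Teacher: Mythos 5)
Your proof is correct: the paper states Theorem~\ref{th_un2.2.2} without proof (it is marked as immediate from the zone construction and Lemmas~\ref{lem_un2.1.1} and \ref{lem_un2.1.2}), and your two-directional unwinding --- translating the forms \eqref{un-2.2} and \eqref{un-2.3} into conditions $1)$ and $2)$ zone by zone, then rerunning the algorithm on a filled matrix to see that it is a fixed point --- is exactly the intended argument. In particular you correctly identify the one point that makes the filling order well posed, namely that the blocks $F_{m,m+1}$ referenced in condition $2)$ lie outside the canonical part of $\mbox{Bl}(Z)$ and hence in zones of strictly greater depth.
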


                                            \medskip

     This theorem gives a convenient way to present solutions of
unitary matrix problems in
small sizes by their sets of
schemes. Thus, the list of
schemes of canonical $5\times
5$ matrices under unitary
similarity was obtained by
Klimenko \cite{a_un11}.


\subsection{Unitarily wild matrix problems}
\label{ss_un2.3}
     The canonical form problem for pairs of $n\times n$ matrices
under simultaneous similarity
(i.e., for representations of
the quiver \pair ) plays a
special role in the theory of
(non-unitary) matrix
problems. It may be proved
that its solution implies the
clasification of
representations of every
quiver (and even
representations of every
finite dimensional algebra).
For this reason, the
classification problem for
pairs of matrices under
simultaneous similarity is
used as a yardstick of the
complexity; Donovan and
Freislich \cite{a_un12} (see
also \cite{a_un2}) suggested
to name a classification
problem {\it wild} if it
contains the problem of
simultaneous similarity, and
otherwise to name it {\it
tame} (in accordance with the
partition of animals into
wild and tame ones).

The canonical form problem
for an $n\times n$ matrix
under unitary similarity
(i.e., for unitary
representations of the
quiver$\! \!$
\unitlength 0.5mm
\linethickness{0.4pt}
\begin{picture}(9,0)(7,13.5)
\put(15.00,15.00){\circle*{0.8}}
\put(9.47,15.00){\oval(4.00,4.00)[l]}
\bezier{16}(9.53,16.93)(11.33,17.00)(13.73,15.93)
\put(13.80,14.33){\vector(2,1){0.2}}
\bezier{20}(9.53,13.00)(11.80,13.13)(13.80,14.33)
\end{picture}
\!) plays the same role in
the theory of unitary matrix
problems: it contains the
problem of classifying
unitary representations of
every quiver. For example,
the problem of classifying
unitary representations of
the quiver ({\ref{un-1.4})
can be regarded (by Lemma
\ref{lem_un2.1.2}) as the
problem of classifying, up to
unitary similarity, matrices
of the form:
        $$ \left[
              \begin{tabular}{c|c|c|c|c}
                  $5I$&$I$&$A_{\lambda}$&$A_{\nu}$&$A_{\mu}$\\   \hline
                  0&$4I$&$I$&0&0\\       \hline
                  0&0& $3I$  &0&0\\       \hline
                  0&0&0& $2I$ &$A_{\xi}$ \\      \hline
                  0&0&0&0& $I$
              \end{tabular}\right].$$
A matrix problem is called
{\it unitarily wild} (or
*-wild, see \cite{a_un13}) if
it contains the problem of
classifying matrices via
unitary similarity, and {\it
unitarily tame} otherwise.

       For each unitary problem, one has an alternative: to solve it
or to prove that it is
unitarily wild (and hence is
hopeless in a certain sense).
In this section we give some
examples of such
alternatives.
\medskip

(i)  Let us consider the
problems of classifying
nilpotent linear operators
$\varphi,\ \varphi^n=0$, in a
unitary space.

For $n=2$ this problem is
unitarily tame; the canonical
matrix of  $\varphi$ (see
page \pageref{page_un2}) is
\[
\begin{bmatrix}
  0 & D\\ 0  & 0
\end{bmatrix},
\]
where $D$ is of the form
({\ref{un-2.2}) without zero
columns. Indeed, a matrix
$F=[F_{ij}]$ of the form
(\ref{un-2.3}) satisfies
$F^2=0$ only if $k=2,\
F_{11}=0,$ and $F_{22}=0$; we
can reduce $F_{12}$ to the
form (\ref{un-2.2}).

      For $n>2$ this problem is unitarily wild since the matrices
    $$ \left[ \begin{array}{ccc}
                       0 &  I   &  X            \\
                       0 &  0   &  I            \\
                       0 &  0   &  0
                 \end{array} \right]
  \ \ \ {\rm and} \ \ \
         \left[ \begin{array}{ccc}
                       0 &  I   &  Y            \\
                       0 &  0   &  I            \\
                       0 &  0   &  0
                 \end{array} \right]$$
are unitarily similar if and
only if $X$ and $Y$ are
unitarily similar (see also
\cite{a_un14}).
\medskip

(ii)  Let us consider the
problem of classifying
$m$-tuples $(p_1,\dots,p_m)$
of projectors $p_i^2=p_i$ in
a unitary space.

For $m=1$ this problem is
unitarily time; the canonical
matrix of a projector $p=p^2$
was obtained in \cite{a_un15}
and \cite{a_un16}. Of course,
it is
\[
\begin{bmatrix}
  I & D\\ 0  & 0
\end{bmatrix},
\]
where $D$ is of the form
({\ref{un-2.2}), since a
matrix $F=[F_{ij}]$ of the
form (\ref{un-2.3}) satisfies
$F^2=F$ only if $k=2,\
F_{11}=I,$ and $F_{22}=0$.

As was proved in [16], for
$m\ge 2$ this problem is
unitarily wild even if $p_1$
is an orthoprojector, i.e.
$p_1=p_1^2=p_1^*$, (since the
pairs of idempotent matrices
     $$ \left(\left[ \begin{array}{cc}
                       I &  0                   \\
                       0 &  0
              \end{array} \right],
              \left[ \begin{array}{cc}
                       X & I-X                  \\
                       X & I-X
              \end{array} \right]\right)
$$
and
$$
        \left(\left[ \begin{array}{cc}
                       I &  0                   \\
                       0 &  0
              \end{array} \right],
              \left[ \begin{array}{cc}
                       Y & I-Y                  \\
                       Y & I-Y
              \end{array} \right]\right)
$$
are unitarily similar if and
only if $X$ and $Y$ are
unitarily similar), or if
$p_1p_2=p_2p_1=0$.
\medskip

(iii) The problems of
classifying the following
operators and systems of
operators in unitary spaces
are unitarily wild:

\begin{itemize}
  \item
Pairs of linear operators
$(\varphi, \psi)$ such that
$$\varphi^2=\psi^2=\varphi\psi=\psi\varphi=0$$
since
     $$ \left(\left[ \begin{array}{cc}
                       0 &  I                   \\
                       0 &  0
              \end{array} \right],
              \left[ \begin{array}{cc}
                       0 &  X                   \\
                       0 &  0
              \end{array} \right]\right)
\ \ \ {\rm and} \ \ \
        \left(\left[ \begin{array}{cc}
                       0 &  I                   \\
                       0 &  0
              \end{array} \right],
              \left[ \begin{array}{cc}
                       0 &  Y                   \\
                       0 &  0
              \end{array} \right]\right)$$
are unitarily similar if and
only if $X$ and $Y$ are
unitarily similar.

  \item
Pairs of selfadjoint
operators  $(\varphi, \psi)$
because
 $ \varphi+i\psi $ is an arbitrary operator. The tame-wild dichotomy
for satisfying quadratic
relation pairs of selfadjoint
operators in a Hilbert space
was studied in [17].

  \item
Pairs of unitary operators
$(\varphi, \psi)$ since
$$(i(\varphi+{\bf
1})(\varphi-{\bf 1})^{-1},\,
i(\psi+{\bf 1})(\psi-{\bf
1})^{-1})$$ is a pair of
selfadjoint operators (the
Cayley transformation).

  \item
Partial isometries (i.e.,
linear operators $\varphi$
such that
$(\varphi^*\varphi)^2=\varphi^*\varphi$),
it was proved in [18].
\end{itemize}
\medskip

(iv) The problem of
classifying unitary
representations of a
connected quiver $Q$ is
unitarily tame if $Q\in
\{\bullet,\ \bullet\! \to
\!\bullet\}$  and unitarily
wild otherwise.

     Indeed, the classification of unitary representations of the quiver
$\bullet\! \to \!\bullet$ is
given by the singular value
decomposition (Lemma
\ref{lem_un2.1.1}).

     The problem of classifying unitary representations of the quiver
$\bullet\! \to \!\bullet\!
\gets \!\bullet$ is unitarily
wild \label{page_un3} because
it reduces to the unitary
matrix problem for marked
block matrices of the form $
\begin{tabular}{|c|c|}
\hline
                              &   \\ \hline
 \end{tabular} $\ ,
and two block matrices

\unitlength 1mm
\linethickness{0.4pt}
\begin{picture}(65.00,22.00)(-15,-3)
\put(0.00,0.00){\framebox(25.00,15.00)[cc]{}}
\put(15.00,15.00){\line(0,-1){15.00}}
\multiput(24,5.00)(-1,0){9}{\circle*{0.3}}        
\multiput(24,10.00)(-1,0){9}{\circle*{0.3}}        
\multiput(20,14.00)(0,-1){9}{\circle*{0.3}}        
\put(2.50,12.50){\makebox(0,0)[cc]{$2I$}}
\put(7.50,12.50){\makebox(0,0)[cc]{0}}
\put(12.50,12.50){\makebox(0,0)[cc]{0}}
\put(17.50,12.50){\makebox(0,0)[cc]{$I$}}
\put(22.50,12.50){\makebox(0,0)[cc]{$X$}}
\put(2.50,7.50){\makebox(0,0)[cc]{0}}
\put(7.50,7.50){\makebox(0,0)[cc]{$I$}}
\put(12.50,7.50){\makebox(0,0)[cc]{0}}
\put(17.50,7.50){\makebox(0,0)[cc]{$I$}}
\put(22.50,7.50){\makebox(0,0)[cc]{$I$}}
\put(2.50,2.50){\makebox(0,0)[cc]{0}}
\put(7.50,2.50){\makebox(0,0)[cc]{0}}
\put(12.50,2.50){\makebox(0,0)[cc]{0}}
\put(17.50,2.50){\makebox(0,0)[cc]{$I$}}
\put(22.50,2.50){\makebox(0,0)[cc]{0}}
\put(32.50,7.50){\makebox(0,0)[cc]{and}}
\put(40.00,0.00){\framebox(25.00,15.00)[cc]{}}
\put(55.00,15.00){\line(0,-1){15.00}}
\multiput(64,5.00)(-1,0){9}{\circle*{0.3}}        
\multiput(56,10.00)(1,0){9}{\circle*{0.3}}        
\multiput(60,14.00)(0,-1){9}{\circle*{0.3}}        
\put(42.50,12.50){\makebox(0,0)[cc]{$2I$}}
\put(47.50,12.50){\makebox(0,0)[cc]{0}}
\put(52.50,12.50){\makebox(0,0)[cc]{0}}
\put(57.50,12.50){\makebox(0,0)[cc]{$I$}}
\put(62.50,12.50){\makebox(0,0)[cc]{$Y$}}
\put(42.50,7.50){\makebox(0,0)[cc]{0}}
\put(47.50,7.50){\makebox(0,0)[cc]{$I$}}
\put(52.50,7.50){\makebox(0,0)[cc]{0}}
\put(57.50,7.50){\makebox(0,0)[cc]{$I$}}
\put(62.50,7.50){\makebox(0,0)[cc]{$I$}}
\put(42.50,2.50){\makebox(0,0)[cc]{0}}
\put(47.50,2.50){\makebox(0,0)[cc]{0}}
\put(52.50,2.50){\makebox(0,0)[cc]{0}}
\put(57.50,2.50){\makebox(0,0)[cc]{$I$}}
\put(62.50,2.50){\makebox(0,0)[cc]{0}}
\end{picture}

\noindent are equivalent if
and only if $X$ and $Y$ are
unitarily similar. We can
change the direction of an
arrow in a quiver by
replacing in each
representation the
corresponding linear mapping
by the adjoint one.
                                                                \medskip

(v)  Let us consider the
problem of classifying
$n$-tuples $(V_1,\dots,V_n)$
of subspaces of a unitary
space $U$ up to the following
equivalence:
$$(V_1,\dots,V_n)\sim
(V_1',\dots,V_n')$$ if there
exists an isometry $\varphi:
U\to U$ such that $$\varphi
V_1=V_1',\dots, \varphi
V_n=V_n'.$$ Fixing an
orthogonal basis in $U$ and
(non-orthogonal) bases in
$V_1,\dots,V_n$, we reduce it
to the canonical form problem
for block matrices
$A=[A_1|\dots|A_n]$ (the
columns of $A_i$ are the
basis vectors of $V_i$ and
hence are linearly
independent) up to unitary
transformations of rows of
$A$ and elementary
(non-unitary) transformations
of columns of $A_i$
$(i=1,\dots,n)$. If $n=1$,
then $A=[A_1] $ reduces to
$I\oplus 0_{p0}$; this
follows from Lemma
\ref{lem_un2.1.1}.

If $n=2$, then $A=[A_1|A_2]$
reduces to
\begin{center}
\unitlength 1.00mm
\linethickness{0.4pt}
\begin{picture}(23.00,20.00)
\put(10.00,0.00){\line(0,1){20.00}}
\put(5.00,15.00){\makebox(0,0)[cc]{{\LARGE\sl
I}}}
\put(5.00,5.00){\makebox(0,0)[cc]{{\LARGE
0}}}
\put(12.50,17.50){\makebox(0,0)[cc]{$I$}}
\put(17.50,17.50){\makebox(0,0)[cc]{0}}
\put(12.50,12.50){\makebox(0,0)[cc]{0}}
\put(17.50,12.50){\makebox(0,0)[cc]{$D$}}
\put(12.50,7.50){\makebox(0,0)[cc]{0}}
\put(17.50,7.50){\makebox(0,0)[cc]{$I$}}
\put(12.50,2.50){\makebox(0,0)[cc]{$0$}}
\put(17.50,2.50){\makebox(0,0)[cc]{0}}
\multiput(15,11)(0,1){9}{\circle*{0.3}}        
\put(15.00,20.00){\line(0,1){0.00}}
\multiput(16,15)(1,0){4}{\circle*{0.3}}        
\put(20.00,20.00){\line(0,-1){20.00}}
\put(0.00,20.00){\line(1,0){20.00}}
\put(20.00,20.00){\line(0,1){0.00}}
\multiput(11,10)(1,0){9}{\circle*{0.3}}        
\put(23.00,10.00){\makebox(0,0)[cc]{,}}
\put(0.00,0.00){\line(0,1){20.00}}
\put(0.00,0.00){\line(1,0){20.00}}
\end{picture}
\end{center}

\noindent where $D$ is of the
form (\ref{un-2.2}). This
block matrix reduces to a
block direct sum of matrices
$$\left[\genfrac{}{}{0pt}{}{1}{0} \left|
\genfrac{}{}{0pt}{}{\alpha}{1}\right]\right.\
(\alpha>0),\quad  [1|1],\quad
[1|0_{10}],\quad
[0_{10}|1],\quad
[0_{10}|0_{10}].$$ (The
problem of classifying pairs
of subspaces in a complex or
real vector space with scalar
product given by a symmetric,
or skew-symmetric, or
Hermitian form was solved in
[19].)

     For $n=3$ this problem is unitarily wild even if we restrict
our consideration to the
triples $(V_1,V_2,V_3)$ with
$V_1 \perp V_2$ since
$$
                  \left[ \begin{tabular}{c|c|c}
                         $I$&$0$&$X$ \\
                         $0$&$I$&$Y$  \\
                         $0$&$0$&$I$
                          \end{tabular} \right]
 \ \ \text{reduces to} \  \
                  \left[ \begin{tabular}{c|c|c}
                         $I$&$0$&$X'$ \\
                         $0$&$I$&$Y'$  \\
                         $0$&$0$&$I$
                          \end{tabular} \right]
$$
if and only if $(X,Y)$ and
$(X',Y')$ determine isometric
unitary representations of
the quiver $\bullet\! \gets
\bullet\! \to \!\bullet$ (see
page \pageref{page_un3}). An
analogous statement was
proved in [20] and [13]: the
problem of classifying
triples $(p_1,p_2,p_3)$ of
orthoprojectors
$p_i=p_i^2=p_i^*$ in a
unitary space is unitarily
wild even if
$$p_1p_2=p_2p_1=0;$$ such a
triple determines, in
one-to-one manner, a triple
$(V_1,V_2,V_3)$ with $V_1
\perp V_2$ by means of
$V_i={\rm Im}\,p_i$.


\section{Unitary representations of a quiver}
\label{s_un3}
     From now on, $Q$ denotes a quiver with vertices $1,\dots,p$
and arrows
$\alpha_1,\dots,\alpha_q$. A
{\it unitary representation}
of  dimension
$$d=(d_1,\dots,d_p)\in
{\mathbb N}_0^p$$ (in short,
a {\it unitary $d$
representation}) will be
given by assigning a matrix
$A_{\alpha}\in {\mathbb
C}^{d_j\times d_i}$ to each
arrow $\alpha: i\to j$, i.e.,
by the sequence
$$A=(A_{\alpha_1},\dots,
A_{\alpha_q})$$ (assigning to
each vertex $i$ the unitary
vector  space $ {\mathbb
C}^{d_i}$ with scalar product
$$(x,y)={\bar x}_1
y_1+\dots+{\bar
x}_{d_i}y_{d_i},$$ we obtain
a unitary representation; see
page
\pageref{page_un_ksdudr}).

     An {\it isometry}
$A\stackrel{\sim}{\to}B$ of
$d$ representations $A$ and
$B$ (an {\it autometry} if
$A=B$) is given by a sequence
$$S=(S_1,\dots,S_p)$$ of
unitary $ d_i\times d_i$
matrices $S_i$ such that $$
S_jA_{\alpha}=B_{\alpha}S_i
$$ for each arrow $\alpha:
i\to j$; we say also that $B$
is obtained  from $A$ by {\it
admissible transformations}
and write $A \simeq B$. An
autometry
 $S:A{\tilde \to} A$ is {\it scalar} if $S=a{\bf 1}_A$,
where $$a\in {\mathbb
C},\quad {\bf
1}_A=(I_{d_1},\dots,
I_{d_p}).$$

For two sequences of matrices
$$M=(M_1,\dots,M_t),\qquad
N=(N_1,\dots,N_t),$$ we
denote $$M\oplus N=(M_1\oplus
N_1,\dots, M_t\oplus N_t).$$
A unitary $d$ representation
$A$ of $Q$ is {\it
indecomposable} if (i)
$d\ne(0,\dots,0)$ and (ii)
$A\simeq B\oplus C$ implies
$B$ or $C$ has dimension
$(0,\dots,0)$.


\subsection{Canonical representations}       
\label{ss_un3.1}

Let $A$ be a unitary
\label{page_un_26468}
representation of $Q$. Using
the algorithm from page
\pageref{page_un_154}, we
reduce $A_{\alpha_1}$ to its
canonical form
 $A_{\alpha_1}^{\infty}$, then restrict the set of admissible
transformations  with $A$ to
those that preserve
$A_{\alpha_1}^{\infty}$ (it
gives certain unitary matrix
problems for
$A_{\alpha_2},\dots,
A_{\alpha_q}$ with partitions
them into blocks) and reduce
$A_{\alpha_2}$  to its
canonical form
$A_{\alpha_2}^{\infty}$, and
so on.  The obtained
representation
$$A^{\infty}=(A_{\alpha_1}^{\infty},\dots ,
A_{\alpha_q}^{\infty})$$ (we
omit the marking lines) will
be called a {\it canonical
representation} of the quiver
$Q$; the sequence of the
schemes $${\cal
S}(A^{\infty})=({\cal
S}(A_{\alpha_1}^{\infty}),\dots,
{\cal
S}(A_{\alpha_q}^{\infty}))$$
will be called the {\it
scheme} of $A^{\infty}$.

     Clearly, $A\simeq A^{\infty}$ and $A\simeq B$ if and only if
$A^{\infty}=B^{\infty}$.

\begin{theorem}\label{th_un3.1.2}
{\rm(a)} {\it Every unitary
representation is isometric
to a representation of the
form}
\begin{align*}
B&=(P_1\otimes
I_{m_1})\oplus\dots\oplus(P_t\otimes
I_{m_t})\\&
  \simeq\underbrace{P_1\oplus\dots\oplus P_1}_{\mbox{$m_1$ copies}}
\oplus\dots\oplus
\underbrace{P_t\oplus\dots\oplus
P_t}_{\mbox{$m_t$ copies}}
\end{align*}
{\it $($see
$(\ref{un-2.4}))$, where
$P_1,\dots, P_t$ are
nonisometric indecomposable
representations, uniquely
determined up to isometry,
and $m_1,\dots,m_t$ are
uniquely determined natural
numbers. Every autometry
$S:B\stackrel{\sim}{\to} B$
has the form
$$
S=({\bf 1}_{P_1}\otimes
U_1)\oplus\dots\oplus ({\bf
1}_{P_t}\otimes U_t),
$$
where $U_i$ is a unitary
$m_i\times m_i$ matrix $(1\le
i\le t)$}.

      {\rm(b)} {\it A unitary representation of dimension
$\ne (0,\dots,0)$ is
indecomposable if and only if
all its autometries are
scalar.}
\end{theorem}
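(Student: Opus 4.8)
The strategy is to transfer the result from marked block matrices to representations of a quiver via the correspondence already set up in Section~\ref{s_un2}. The key observation is that the whole reduction process for a unitary representation $A=(A_{\alpha_1},\dots,A_{\alpha_q})$ — first reduce $A_{\alpha_1}$ to canonical form, then pass to the subgroup of admissible transformations preserving $A_{\alpha_1}^{\infty}$, which is exactly the admissible-transformation group of a derived marked block matrix, then reduce $A_{\alpha_2}$, and so on — is nothing but an iterated instance of the algorithm of page~\pageref{page_un_154}. So the entire representation $A$ can be encoded as a single (large) marked block matrix $\widehat A$, obtained by stacking the $A_{\alpha_i}$ into disjoint blocks of one big matrix and marking the diagonal blocks appropriately, in such a way that the admissible transformations of $\widehat A$ in the sense of~\eqref{un-1.2} correspond precisely to the isometries $A\simeq B$. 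This is the same device illustrated in~\eqref{un-1.4}. Once this encoding is fixed, parts (a) and (b) of Theorem~\ref{th_un3.1.2} follow from the corresponding parts of Theorem~\ref{th_un2.1.4} essentially verbatim, with ``block direct sum'' $\uplus$ replaced by the representation direct sum $\oplus$.

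In more detail, for part~(a): apply Theorem~\ref{th_un2.1.4}(a) to $\widehat A$ to get $\widehat A\sim (P_1\otimes I_{m_1})\uplus\dots\uplus(P_t\otimes I_{m_t})$ with the $P_i$ nonequivalent indecomposable marked block matrices. Each $P_i$ is itself the encoding $\widehat{Q_i}$ of a unitary representation $Q_i$ of the quiver, and the decomposition of $\widehat A$ translates into an isometry $A\simeq(Q_1\otimes I_{m_1})\oplus\dots\oplus(Q_t\otimes I_{m_t})$. The indecomposability of $P_i$ as a marked block matrix is equivalent, under the encoding, to indecomposability of $Q_i$ as a representation — because a block-direct-sum splitting $P_i\sim B\uplus C$ corresponds to a representation splitting $Q_i\simeq B'\oplus C'$ of the right dimensions, using the definition of block direct sum and the fact that the marked structure of $\widehat{Q_i}$ respects the grouping of rows and columns by vertex. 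Uniqueness of the $P_i$ up to equivalence and of the $m_i$, together with the description of autometries as $({\bf 1}_{P_1}\otimes U_1)\uplus\dots\uplus({\bf 1}_{P_t}\otimes U_t)$, likewise transfers directly, giving the stated form $S=({\bf 1}_{P_1}\otimes U_1)\oplus\dots\oplus({\bf 1}_{P_t}\otimes U_t)$.

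For part~(b): if $A$ has dimension $\ne(0,\dots,0)$ and is indecomposable, then in the decomposition of part~(a) there is exactly one summand and $m_1=1$, so every autometry is ${\bf 1}_{P_1}\otimes U_1$ with $U_1$ a unitary $1\times1$ matrix, i.e.\ $S=a{\bf 1}_A$ with $|a|=1$, hence scalar. Conversely, if all autometries of $A$ are scalar, then in particular the decomposition of part~(a) cannot have more than one summand (two distinct summands would give non-scalar autometries from $U=I\oplus(-I)$-type choices), and $m_1=1$ for the same reason; so $A$ is indecomposable. This is exactly the content of Theorem~\ref{th_un2.1.4}(b) transported along the encoding.

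\medskip
\noindent\textbf{Main obstacle.} The only genuinely non-routine point is verifying that the ``single big marked block matrix'' encoding $\widehat A$ is faithful in both directions: that every admissible transformation of $\widehat A$ arises from an isometry of $A$ (so nothing extra is introduced by the marking conventions and the block ordering~\eqref{un-1.1}), and conversely that every block-direct-sum decomposition of the canonical form $\widehat A^{\infty}$ respects the partition of rows/columns into the $p$ vertex-groups, so that it really does come from a representation-level $\oplus$. This requires checking that the additional divisions and marking lines produced by the algorithm never cut across the boundaries between the strips belonging to different vertices — which holds because those boundaries are present from the start and the algorithm only refines the existing partition. Once this bookkeeping is done, parts (a) and (b) are immediate corollaries of Theorem~\ref{th_un2.1.4}.
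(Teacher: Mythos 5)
Your proposal is correct and follows the paper's own route: the paper likewise encodes the representation $A$ as a single marked block matrix (a block-diagonal matrix $\mathrm{diag}(A_{\alpha_q},\dots,A_{\alpha_1},0,\dots,0)$ with suitable marked blocks) whose admissible transformations correspond to the isometries of $A$, and then invokes Theorem~\ref{th_un2.1.4}. The faithfulness check you flag as the main obstacle is exactly the point the paper treats as routine via the construction illustrated in~\eqref{un-1.4}.
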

                                                                      \medskip
\begin{proof}
Analogously (\ref{un-1.4}),
the matrices of every unitary
representation $A$ of $Q$ can
be accommodated in a block
diagonal matrix
$$A=\mbox{diag}(A_{\alpha_q},
A_{\alpha_{q-1}},\dots,A_{\alpha_1},0,\dots,0)$$
with a certain set of marked
blocks such that the
admissible transformations
with $A$ correspond to the
admissible transformations
with $M(A)$. Then
$M(A^{\infty})=
M(A)^{\infty}$ and we can
apply Theorem
\ref{th_un2.1.4}.\end{proof}


\subsection{The set of dimensions of indecomposable unitary representations}
\label{ss_un3.2} We will use
the following notation:

\begin{itemize}
  \item
$M_Q=[m_{ij}]$ is the
$p\times p$ matrix, in which
$m_{ij}$ is the number of
arrows $i\to j$ and $i\gets
j$ of the quiver $Q$;

  \item
supp$(z)$ is the full
subquiver of $Q$ with the
vertex set $\{i\,|\,z_i\ne
0\}$ for each $z\in {\mathbb
N}_0^p;$

  \item
$e_i=(0,\dots,1,\dots,0)\in{\mathbb
N}_0^p$ with 1 in the $i$th
position.
\end{itemize}

     Denote by $D(Q)$ the subset of ${\mathbb N}_0^p$
consists of $e_1,\dots, e_p$,
all $ e_i+e_j$ with $
m_{ij}=1$, and all nonzero
$z$ with connected
supp$(z)\notin \{\bullet,\
\bullet\! \to \!\bullet\}$
such that $zM_Q\ge
z$.\label{page_un_hyg}

     In this section we prove:                             \medskip

\begin{theorem}\label{th_un_3.2.1} {\it $D(Q)$ is the set
of dimensions of
indecomposable unitary
representations of a quiver
$Q$}. \end{theorem}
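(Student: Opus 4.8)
The plan is to prove the two inclusions separately, and to split the argument according to the structure of $\operatorname{supp}(z)$. The easy inclusion is that every $z\in D(Q)$ is the dimension of an indecomposable unitary representation. For $z=e_i$ take the representation with $\mathbb C$ at vertex $i$, $0$ elsewhere, all arrows zero; for $z=e_i+e_j$ with $m_{ij}=1$ take $\mathbb C$ at $i$ and at $j$, the single arrow between them sent to $1\in\mathbb C$, all other arrows zero — by Lemma~\ref{lem_un2.1.1} (singular value decomposition) this has only scalar autometries, hence is indecomposable by Theorem~\ref{th_un3.1.2}(b). For the remaining $z$, with $\operatorname{supp}(z)$ connected, not $\bullet$ or $\bullet\!\to\!\bullet$, and $zM_Q\ge z$, one must construct an indecomposable $z$ representation; this is the substantive half and I discuss it below.

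For the reverse inclusion, suppose $A$ is an indecomposable unitary $d$ representation with $d\ne 0$; we must show $d\in D(Q)$. First reduce to the case $\operatorname{supp}(d)=Q$ (replace $Q$ by the full subquiver on the support, noting indecomposability is unaffected and that an arrow incident to a vertex outside the support carries a zero map). If $d=e_i$ or $d=e_i+e_j$ with $m_{ij}=1$ we are done; also the cases $Q=\bullet$ (forcing $d=e_1$) and $Q=\bullet\!\to\!\bullet$ (forcing $d=e_i$ or $e_1+e_2$ by the singular value decomposition, Lemma~\ref{lem_un2.1.1}) are immediate. In all other cases, with $Q$ connected and $d$ everywhere positive, I would derive the inequality $dM_Q\ge d$ by a parameter-counting / stabilizer argument: the group acting is $\prod_i U(d_i)$, of real dimension $\sum_i d_i^2$; the representation space $\bigoplus_{\alpha:i\to j}\mathbb C^{d_j\times d_i}$ has real dimension $2\sum_{\alpha:i\to j} d_id_j = \sum_{i,j} m_{ij}d_id_j = d\,M_Q\,d^{\mathsf T}$ (reading $M_Q$ symmetrically). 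Indecomposability forces the stabilizer of $A$ to be exactly the circle of scalars (Theorem~\ref{th_un3.1.2}(b)), so the orbit has real dimension $\sum_i d_i^2 - 1$. Since the orbit sits inside the representation space, $\sum_i d_i^2 - 1 \le d\,M_Q\,d^{\mathsf T}$, and a short bookkeeping argument converting this single scalar inequality into the coordinatewise inequality $dM_Q\ge d$ — using connectedness and that $d$ is strictly positive, and excluding the two small exceptional quivers where the inequality can fail — completes this direction. (The comparison of real dimensions is legitimate because the orbit is a smooth submanifold; one does not need algebraic-geometry genericity here, only that a submanifold cannot have larger dimension than its ambient space.)

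The main obstacle is the forward construction: given $z$ with connected support (not the two small cases) and $zM_Q\ge z$, build an \emph{indecomposable} unitary $z$ representation. I would proceed by first producing an indecomposable \emph{ordinary} (non-unitary) representation of dimension $z$ — here one may invoke Kac's theorem \cite{a_un6,a_un7,a_un8}, since $zM_Q\ge z$ says precisely that $z$ is not less than its image under the (symmetrized) adjacency action, which for the associated graph puts $z$ in the appropriate range of roots — and then \emph{rigidify} it to be indecomposable even as a unitary representation. The trick for the rigidification is to perturb or specialize the matrix entries so that, writing the representation in canonical form via the algorithm of Section~\ref{ss_un2.1}, the scheme ${\cal S}(A^\infty)$ consists of a single indecomposable block, equivalently so that every autometry is scalar. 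Concretely: arrange along a spanning tree of $\operatorname{supp}(z)$ that the maps are (generic) isometric embeddings/projections tying all the vertex spaces together, forcing any autometry to be a single scalar on the whole of $\bigoplus_i\mathbb C^{z_i}$; then use the extra arrows guaranteed by $zM_Q\ge z$ (the inequality supplies "enough room" — this is exactly where that hypothesis is consumed) to choose one further map with a carefully chosen spectrum that is not invariant under any nontrivial scalar, killing the residual circle. I expect the delicate point to be handling the boundary case of equality $zM_Q = z$ (the "null-root" type vertices, e.g.\ extended Dynkin support), where the generic parameter in the non-unitary classification is genuinely needed and one must check that \emph{some} value of that parameter yields a unitarily indecomposable representation rather than a sum $\mathcal U\oplus\bar{\mathcal U}$; I would dispatch this by an explicit choice of a non-real eigenvalue on a suitable loop or cycle in the quiver, making the representation non-isomorphic to its conjugate.
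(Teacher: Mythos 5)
Both halves of your plan contain genuine gaps, and the second one is a wrong approach rather than a missing detail. For the forward direction you propose to start from Kac's theorem, i.e.\ from an indecomposable \emph{ordinary} representation of dimension $z$, and then rigidify it. But $D(Q)$ is strictly larger than the set of dimension vectors of ordinary indecomposables (the positive roots). Take $Q=\bullet\to\bullet\gets\bullet$ and $z=(1,2,1)$: then $zM_Q=(2,2,2)\ge z$, so $z\in D(Q)$, and indeed the paper exhibits the indecomposable unitary representation $\bigl[\genfrac{}{}{0pt}{}{1}{0}\,\big|\,\genfrac{}{}{0pt}{}{\alpha}{1}\bigr]$, $\alpha>0$, of this dimension; but $(1,2,1)$ is not a root of $A_3$, so by Gabriel's theorem there is \emph{no} ordinary indecomposable to rigidify. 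The paper avoids this entirely: it never passes through non-unitary representations, but builds the representation inductively, adding $e_i$ one coordinate at a time. The combinatorial step is Lemma~\ref{lem_un3.2.2}(ii) (any $u\in D(Q)$ is reached from some $e_i$ by a chain in $D(Q)$ with unit increments), and the representation-theoretic step is Lemma~\ref{lem_un3.2.4} (an indecomposable $z$ representation with $z_i<\Delta_i(z)$ can be enlarged to an indecomposable $z+e_i$ representation, by an explicit modification — a bigger Jordan block at a loop, or appending a new singular value — followed by an autometry computation via Theorem~\ref{th_un3.1.2}).

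For the reverse direction, the orbit-dimension count gives only the single scalar inequality $\sum_i d_i^2-1\le \sum_{i,j}m_{ij}d_id_j$, and this does \emph{not} imply the coordinatewise inequality $dM_Q\ge d$. Counterexample: for the triangle quiver ($m_{ij}=1$ for all $i\ne j$) and $d=(3,1,1)$ one has $\sum d_i^2-1=10\le 14=\sum m_{ij}d_id_j$, yet $\Delta_1(d)=2<3=d_1$. So the ``short bookkeeping argument'' you defer to cannot exist. What is actually needed is a \emph{local} argument at each deficient vertex, which is the paper's Lemma~\ref{lem_un3.2.3}: if $\Delta_l(d)<d_l$, orient all arrows at $l$ to point into $l$ (replacing maps by adjoints, which is harmless for isometry), concatenate their matrices into a single $d_l\times\Delta_l(d)$ block with more rows than columns, and use a unitary row operation to create a zero row; this splits off a zero summand of dimension $e_l$, contradicting indecomposability. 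Your reductions to the cases $\bullet$ and $\bullet\to\bullet$ and your treatment of $e_i$ and $e_i+e_j$ are fine, but the two central steps need to be replaced as above.
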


Put $$\Delta_i(z):=\sum_j
m_{ij}z_j,\qquad z\in{\mathbb
N}_0^p,\quad1\le i\le p. $$
Then
$$zM_Q=(\Delta_1(z),\dots,
\Delta_p(z)).$$
                                                           \medskip

\begin{lemma}\label{lem_un3.2.2}
 {\it  $D(Q)$ satisfies
the following conditions:}

     {\rm(i)}  {\it If $z\in D(Q)$ and ${\rm supp}(z)\notin \{\bullet,\
\bullet\! \to \!\bullet,\
\bullet
\unitlength 0.5mm
\linethickness{0.4pt}
\begin{picture}(8.0,2.00)(0.5,-1.8)
\put(5.53,0.00){\oval(4.00,4.00)[r]}
\bezier{16}(5.47,1.93)(3.67,2.00)(1.27,0.93)
\put(1.20,-0.67){\vector(-2,1){0.2}}
\bezier{20}(5.47,-2.00)(3.20,-1.87)(1.20,-0.67)
\end{picture}
\}$, then $zM_Q>z$.}

     {\rm(ii)}  {\it If $z,\,u\in D(Q)$ and $z<u$, then
there exists $i$ such that
$z+e_i\le u$ and $z+e_i\in
D(Q)$.}
\end{lemma}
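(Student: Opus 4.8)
The plan is to analyze the two conditions separately, relying on the combinatorial description of $D(Q)$ and the properties of the form $z \mapsto zM_Q$.

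For (i), suppose $z \in D(Q)$ with $\mathrm{supp}(z)$ connected and not one of the listed exceptional quivers. By the definition of $D(Q)$ (page \pageref{page_un_hyg}), $z$ is one of $e_1,\dots,e_p$, or $e_i+e_j$ with $m_{ij}=1$, or a nonzero vector with $zM_Q \ge z$. First I would dispose of the case $z = e_k$ for a single vertex: then $\mathrm{supp}(z) = \bullet$ unless vertex $k$ carries a loop, in which case $\mathrm{supp}(z)$ is the one-vertex quiver with a loop, which is precisely the excluded case; so this subcase is vacuous. Similarly, for $z = e_i + e_j$ with $m_{ij}=1$, connectedness of $\mathrm{supp}(z)$ forces $\mathrm{supp}(z) = \bullet\!\to\!\bullet$ (a single arrow between distinct vertices), again excluded. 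So the only surviving case is $zM_Q \ge z$, and we must upgrade $\ge$ to $>$. The inequality $zM_Q \ge z$ means $\Delta_i(z) \ge z_i$ for all $i$; suppose for contradiction that equality holds in every coordinate, i.e.\ $\Delta_i(z) = z_i$ for all $i$ with $z_i \neq 0$ (and $\Delta_i(z) = 0 = z_i$ elsewhere, which forces no arrows between $\mathrm{supp}(z)$ and its complement, consistent with connectedness). Restricting attention to the connected quiver $\mathrm{supp}(z)$, the condition $zM_{\mathrm{supp}(z)} = z$ with $z$ a positive vector says $z$ is a positive eigenvector of the symmetric nonnegative matrix $M_{\mathrm{supp}(z)}$ with eigenvalue $1$; by Perron--Frobenius for the connected (hence irreducible) case this means the Perron eigenvalue equals $1$, i.e.\ $\mathrm{supp}(z)$ is (the underlying graph of) an extended Dynkin diagram or one of the small exceptions. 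The exceptions with Perron eigenvalue exactly $1$ are exactly $\bullet$, $\bullet\!\to\!\bullet$ with the two-arrow count (i.e.\ $m_{ij}$ patterns giving $M$ with spectral radius $1$), and the one-vertex loop graph $\bullet$ with a loop — which is where the third excluded diagram enters. The hard part of (i) is pinning down precisely which graphs have $zM_Q = z$ solvable with $z$ positive and matching them against the three exclusions; this is a Perron--Frobenius / graph-spectral computation that I expect the author does by hand for the relevant small cases rather than invoking heavy machinery.

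For (ii), given $z, u \in D(Q)$ with $z < u$, I want to find a vertex $i$ with $z + e_i \le u$ and $z + e_i \in D(Q)$. Since $z < u$ coordinatewise, there is certainly some $i$ with $z_i < u_i$, so $z + e_i \le u$ is automatic for any such $i$; the real content is membership in $D(Q)$. The plan is to pick $i$ cleverly: among all $i$ with $z_i < u_i$, choose one so that $\mathrm{supp}(z + e_i)$ is connected (possible because $\mathrm{supp}(u) \supseteq \mathrm{supp}(z+e_i) \supseteq \mathrm{supp}(z)$ and one can grow $\mathrm{supp}(z)$ toward $\mathrm{supp}(u)$ through a connected subquiver — if $\mathrm{supp}(z)$ is empty, start anywhere in the connected $\mathrm{supp}(u)$; otherwise add a vertex adjacent to the current support, which exists since $\mathrm{supp}(u)$ is connected). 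With $w := z + e_i$ having connected support, I then need $wM_Q \ge w$ (when $\mathrm{supp}(w)$ is not exceptional) or else $w$ is one of the small exceptional dimension vectors. For the inequality: from $u \in D(Q)$ we have $uM_Q \ge u$, and the key monotonicity observation is that adding $e_i$ to $z$ increases $\Delta_k(\cdot)$ by $m_{ki} \ge 0$ in each coordinate, while increasing $w_k$ by $1$ only at $k=i$. So $\Delta_k(w) = \Delta_k(z) + m_{ki}$. We'd like $\Delta_i(w) \ge w_i = z_i + 1$, i.e.\ $\Delta_i(z) + m_{ii} \ge z_i + 1$; and $\Delta_k(w) \ge w_k = z_k$ for $k \neq i$, i.e.\ $\Delta_k(z) + m_{ki} \ge z_k$. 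The latter follows if $\Delta_k(z) \ge z_k$ already, i.e.\ if $z$ itself satisfies $zM_Q \ge z$ — which holds when $\mathrm{supp}(z)$ is non-exceptional by (i), and must be checked directly for the handful of exceptional/small $z$. The former, $\Delta_i(z) + m_{ii} \ge z_i + 1$, is the delicate point: it can fail, and that is exactly when one must instead verify that $w$ lands among the listed exceptions $e_k$ or $e_j + e_k$. I expect the main obstacle in (ii) to be exactly this case analysis — showing that whenever the naive inequality $\Delta_i(w) \ge w_i$ fails for every admissible choice of $i$, the only way $z < u$ can happen forces $w$ (for a suitable $i$) to be one of the two-vertex or one-vertex exceptional vectors, using that $u \in D(Q)$ tightly constrains how much "room" there is between $z$ and $u$. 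The strategy there is to play off the global constraint $uM_Q \ge u$ against the local one we want for $w$, using $z \le w \le u$ and the linearity of $\Delta_k$.

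In summary: part (i) reduces, after clearing away the two-element cases by connectedness, to a Perron--Frobenius statement identifying graphs with a positive $1$-eigenvector of $M_Q$, and part (i) then feeds into part (ii), where one grows the support of $z$ by one vertex toward $u$ along a connected path and checks — using linearity of $\Delta_k$ and the hypothesis $uM_Q \ge u$ — that the enlarged vector either again satisfies $wM_Q \ge w$ or is one of the finitely many small exceptional dimension vectors. I expect the bookkeeping in (ii)'s exceptional cases to be where most of the genuine work lies.
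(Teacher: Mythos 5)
Your reduction in part (i) is headed the right way (the two small membership clauses are easy, and the real content is that a connected support with a strictly positive vector satisfying $zM_Q=z$ must be $\bullet\!\to\!\bullet$ or the one-vertex loop), but you do not actually carry out that identification: you defer it to ``a Perron--Frobenius / graph-spectral computation,'' and the intermediate claim you do make --- that spectral radius $1$ points to extended Dynkin diagrams --- is wrong (those have Perron root $2$ for $M_Q$; the only connected multigraphs with Perron root $1$ are the single edge and the single loop). The paper needs no spectral machinery here: pick $i$ with $z_i$ maximal, take a neighbour $j\ne i$, and the chain $z_j=\Delta_j(z)\ge m_{ij}z_i\ge z_i$ forces $z_i=z_j$, $m_{ij}=1$ and $m_{kj}z_k=0$ for $k\ne i$; symmetrizing in $i,j$ collapses $\mathrm{supp}(z)$ to $\bullet\!\to\!\bullet$. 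Also note your claim that the $z=e_k$ and $z=e_i+e_j$ cases are vacuous overlooks vertices carrying loops (e.g.\ a vertex with two loops gives $\mathrm{supp}(e_k)$ outside the excluded list, though then $\Delta_k(e_k)\ge 2$ settles it trivially).

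The genuine gap is in part (ii). You correctly isolate the delicate case, namely a vertex $l$ with $z_l<u_l$ but $\Delta_l(z)=z_l$ and $m_{ll}=0$, but your proposed resolution --- that in this case $z+e_i$ must be one of the exceptional vectors $e_k$ or $e_j+e_k$ --- is false: for large $z$ on a large quiver $z+e_i$ is never such a vector. The correct move, which your proposal does not contain, is to \emph{change the vertex}: increment a neighbour $j$ of $l$ rather than $l$ itself. Concretely, the equality $\Delta_l(z)=z_l$ rigidifies the neighbourhood of $l$; if some neighbour $j$ has $z_j\ge z_l$ one gets $z_l=z_j$, $m_{lj}=1$, $j$ the unique neighbour of $l$, whence $u_j=\Delta_l(u)\ge u_l>z_l=z_j$ and ($Q\ne\bullet\!\to\!\bullet$ giving $j$ a further neighbour in the support) $z+e_j\in D(Q)$; if instead $z_l>z_j$ for every neighbour $j$, then $\Delta_j(z+e_j)\ge m_{jl}z_l\ge z_j+1$ shows $z+e_j\in D(Q)$ automatically, and comparing $\Delta_l(u)\ge u_l>z_l=\Delta_l(z)$ shows some neighbour has $z_j<u_j$, so $z+e_j\le u$. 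Your closing remark about ``playing off the global constraint $uM_Q\ge u$ against the local one'' gestures at the right inequality but does not identify this switch of vertex, which is the essential idea of the paper's proof of (ii).
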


\begin{proof}  (i)  Let
$$z\in D(Q),
\qquad \mbox{supp}(z)\notin
\{\bullet,\ \bullet\! \to
\!\bullet, \bullet
\unitlength 0.5mm
\linethickness{0.4pt}
\begin{picture}(8.0,2.00)(0.5,-1.8)
\put(5.53,0.00){\oval(4.00,4.00)[r]}
\bezier{16}(5.47,1.93)(3.67,2.00)(1.27,0.93)
\put(1.20,-0.67){\vector(-2,1){0.2}}
\bezier{20}(5.47,-2.00)(3.20,-1.87)(1.20,-0.67)
\end{picture}
\},\qquad zM_Q=z.$$ Fix $i$
such that $z_i= {\rm
max}\{z_1,\dots,z_p\}.$ Then
$m_{ij}\ne 0$ for a certain
$j\ne i$. Since
$$z_j=\Delta_j(z)\ge
m_{ij}z_i\ge z_i,$$ we have
$$z_i=z_j,\qquad m_{ij}=1,\qquad
m_{kj}z_k=0$$ for all $k\ne
i$. Taking $z_j$ and $z_i$
instead of $z_i$ and $z_j$,
we obtain $m_{ki}z_k=0$ for
all $k\ne j$. Hence
$\mbox{supp}(z)=\bullet\! \to
\!\bullet$, a contradiction.

     (ii)  Let  $z,\, u\in D(Q)$ and $z<u$.
 If supp$(z)\ne {\rm supp}(u)$, then there exists
a nonzero $m_{ij}$ with
$$i\in {\rm
supp}(u)\setminus{\rm
supp}(z),\qquad j\in{\rm
supp}(u)\cap {\rm supp}(z).$$
The $z+e_i$ satisfies the
requirements.

We may assume that $${\rm
supp}(z)={\rm supp}(u)=Q.$$
Then
$Q\not\in\{\bullet,\,\bullet\!
\to \!\bullet\}$.
 Fix a vertex $l$ such that $z_l<u_l$.
 We will suppose that $\Delta_l(z)=z_l$ and $m_{ll}=0$
(otherwise $z+e_l$ satisfies
the requirements).

     Assume first that $z_l\le z_j$ for some $m_{lj}\ne 0$.
The condition
$\Delta_l(z)=z_l$ implies
$z_l=z_j,\ m_{lj}=1,$ and
$m_{lk}=0$ for all $k\ne j$.
Hence $$z_j=z_l<u_l\le
\Delta_l(u)= u_j.$$ Since
$Q\ne \bullet\! \to
\!\bullet,\ m_{jk}\ne 0$ for
some $k\ne l$, and we can
take $z+e_j$.

     Next, let $z_l> z_j$ (and hence $z+e_j\in D(Q)$) for all nonzero
$m_{lj}$. If $z_j=u_j$ for
all $m_{lj}\ne 0$, then
$$u_l\le
\Delta_l(u)=\Delta_l(z)=z_l,$$
a contradiction. Hence
$z_j<u_j$ for a certain
$m_{lj}\ne 0$, and we can
take $z+e_j$. \end{proof}
                                                 \medskip

\begin{lemma}\label{lem_un3.2.3}
{\it If $A$ is a unitary $d$
representation of $Q$ and
$d\notin D(Q)$, then $A$
is decomposable.} \\[-3mm]                    
\end{lemma}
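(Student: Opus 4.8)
The plan is to prove the contrapositive: if $A$ is an indecomposable unitary $d$ representation of $Q$, then $d\in D(Q)$. First I would dispose of the easy cases. If $d$ has a single nonzero coordinate, then $\operatorname{supp}(d)=\bullet$ and $A$ is an operator (or several operators) on one space; indecomposability forces $d=e_i$ for some $i$, so $d\in D(Q)$. If $\operatorname{supp}(d)=\bullet\!\to\!\bullet$, the classification is given by the singular value decomposition (Lemma~\ref{lem_un2.1.1}), from which the only indecomposables have dimension $e_i$ or $e_i+e_j$ with $m_{ij}=1$; again $d\in D(Q)$. It also suffices to treat the case $\operatorname{supp}(d)=Q$, since an indecomposable representation of $Q$ with support $Q'$ is the same thing as an indecomposable representation of the full subquiver $Q'$ (extended by zero spaces), and $D(Q)$ was defined so that membership is decided on $\operatorname{supp}(z)$.

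So assume $Q$ is connected, $Q\notin\{\bullet,\ \bullet\!\to\!\bullet\}$, $A$ is an indecomposable unitary representation with all $d_i\ge 1$, and suppose toward a contradiction that $dM_Q\not\ge d$, i.e.\ $\Delta_l(d)<d_l$ for some vertex $l$. The key step is to produce a nonzero proper isometric invariant ``subspace'' at the vertex $l$, contradicting indecomposability via Theorem~\ref{th_un3.1.2}(b). Concretely, consider all arrows incident to $l$. For each arrow $\alpha:i\to l$ the map $A_\alpha$ has image a subspace of ${\mathbb C}^{d_l}$ of dimension at most $d_i$; for each arrow $\alpha:l\to j$ the map $A_\alpha$ has kernel of dimension at least $d_l-d_j$, equivalently its \emph{coimage} (the orthogonal complement of the kernel) has dimension at most $d_j$. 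Summing the relevant dimensions over all arrows at $l$ and counting loops at $l$ appropriately, the total is at most $\Delta_l(d)<d_l$. Hence the sum $W$ of all these images, together with (the orthogonal complements of) the kernels, spans a \emph{proper} subspace of ${\mathbb C}^{d_l}$ — there is a nonzero vector $v\in{\mathbb C}^{d_l}$ orthogonal to every image of an incoming arrow and lying in the kernel of every outgoing arrow (and killed by every loop, treating a loop at $l$ as contributing both an image and a coimage).

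The final step is to use such a $v$ to split off a direct summand. Taking the line $L=\langle v\rangle$ at vertex $l$ and the zero subspace at every other vertex, the conditions on $v$ say exactly that assigning $L$ at $l$ and $0$ elsewhere gives a \emph{subrepresentation} whose complement (namely $L^\perp$ at $l$, full space elsewhere) is also a subrepresentation, because every arrow into $l$ lands in $L^\perp$ and every arrow out of $l$ kills $L$. Since the spaces carry scalar products and the decomposition ${\mathbb C}^{d_l}=L\oplus L^\perp$ is orthogonal, this exhibits $A\simeq B\oplus C$ with $B$ the $1$-dimensional-at-$l$ piece and $C$ nonzero (as some $d_i\ge1$ with $i\ne l$, or $d_l\ge 2$; if $Q$ is connected with $\ge2$ vertices this is automatic, and if $Q$ is a single loop then $\operatorname{supp}(d)=\bullet$, already handled). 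This contradicts indecomposability, so $dM_Q\ge d$ and $d\in D(Q)$.

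I expect the main obstacle to be the bookkeeping in the counting step: making sure that a loop at $l$ is accounted for consistently (it contributes $2m_{ll}$ to $\Delta_l(d)$ via the term $m_{ll}d_l$ being doubled in $M_Q$'s convention, or rather that $m_{ll}$ already counts the loop once as $i\to l$ and once as $i\gets l$), and that multiple arrows between the same pair of vertices, and the simultaneous constraints coming from both images and coimages, do not overcount the dimension. The inequality I need is precisely that $\dim(\text{span of all images at }l) + \dim(\text{span of all coimages at }l)\le \Delta_l(d)$, which follows arrow-by-arrow from $\dim\operatorname{Im}A_\alpha\le d_i$ and $\dim(\ker A_\alpha)^\perp\le d_j$ once one checks that the convention for $m_{ij}$ counts each arrow exactly once in $\Delta_l(d)$. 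Everything else is routine linear algebra about orthogonal complements.
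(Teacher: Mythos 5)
Your argument is essentially the paper's proof: both hinge on the count $\Delta_l(d)<d_l$ at a deficient vertex $l$, which produces a nonzero $v\in{\mathbb C}^{d_l}$ orthogonal to the images of all arrows into $l$ and lying in the kernels of all arrows out of $l$, hence an isometric splitting $A\simeq C\oplus P$ with $P$ the zero representation of dimension $e_l$; the paper reaches the same $v$ by first replacing each outgoing arrow $\alpha: l\to i$ by $\alpha^*: i\to l$ with matrix $A_{\alpha}^*$ (harmless, since all transformations are unitary) and then finding a row of the $d_l\times\Delta_l(d)$ matrix $[A_{\alpha}|\cdots|A_{\gamma}]$ that can be made zero, which is a cosmetic difference. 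One inaccuracy to repair: if ${\rm supp}(d)$ is a single vertex $i$ carrying a loop, it is \emph{not} the quiver $\bullet$ (the support is a full subquiver, loops included), and indecomposability does not force $d=e_i$ --- a nilpotent Jordan block is an indecomposable unitary representation of dimension $ne_i$ for every $n$. The lemma is unharmed, because $m_{ii}\ge 1$ then gives $dM_Q\ge d$ automatically, so $d\in D(Q)$ by the third clause of its definition; but this case must be routed through the general clause rather than through your first ``easy case,'' and the same correction applies to your parenthetical claim that a single loop has support $\bullet$.
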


\begin{proof}  Assume to the
contrary, that $A$ is
indecomposable. Then
supp$(d)$ is connected; Lemma
\ref{lem_un2.1.1} and
$d\notin D(Q)$ imply $${\rm
supp}\,(d)\notin\{\bullet,\,\bullet\!
\to \!\bullet\},\qquad
dM_Q\ngeq d,$$ that is, there
exists $l$ such that
$\Delta_l(d)<d_l$. Then
$m_{ll}=0$ and we can assume
that there are no arrows
starting from $l$ (otherwise
we replace each arrow
$\alpha:l\to i$ by
 $\alpha^*: i\to l$, simultaneously replacing
$A_{\alpha} $ by the adjoint
matrix).

   Let $\alpha,\beta,\dots,\gamma$ be all arrows stopping at $l$;
combine the corresponding
them matrices of $A$ into a
single $d_l\times
\Delta_l(d)$ matrix
$$[A_{\alpha}|A_{\beta}|\cdots|A_{\gamma}].$$
 The number  of its rows is greater than the number
 of its columns; making a zero row by unitary
 transformations of rows, we obtain $A\simeq B\oplus P$,
where $P$ is the zero
representation of dimension
$e_l$, a contradiction.
\end{proof}
                                                     \medskip

\begin{lemma}\label{lem_un3.2.4}
{\it If there exists an
indecomposable unitary $z$
representation
 and $z_i<\Delta_i(z)$ for a certain vertex $i$, then there
exists an indecomposable
unitary  $z+e_i$
representation.}
\end{lemma}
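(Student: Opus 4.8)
The plan is to construct the required representation $B$ by enlarging the given indecomposable representation $A$ at the vertex $i$ by one new dimension, and then to show that every autometry of $B$ is scalar; by Theorem~\ref{th_un3.1.2}(b) this makes $B$ indecomposable, and $\dim B=z+e_i$ by construction. Let $V_i$ be the space of $A$ at $i$; put $V_i^{B}:=V_i\oplus L$ with $\dim L=1$, keep $B_\gamma=A_\gamma$ for every arrow $\gamma$ not incident to $i$, and for the arrows touching $i$ adjoin to $A_\gamma$ a new row if $i$ is the head of $\gamma$, a new column if $i$ is the tail, and a new diagonal entry if $\gamma$ is a loop, the entries of all these adjoined rows, columns and loop corners being at our disposal.

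Let $S=(S_1,\dots,S_p)$ be an autometry of $B$. The first step is to force $S_i=P\oplus[s]$ relative to $V_i\oplus L$, with $P$ unitary on $V_i$ and $|s|=1$; I will arrange this by making $L$ a simple (one-dimensional) eigenspace of an operator that $S_i$ is obliged to commute with, so that $S_i$ preserves $L$ and, being unitary, also $L^{\perp}=V_i$. Here the hypothesis $z_i<\Delta_i(z)$ enters, and there are two cases. Let $N$ be the sum, over all non-loop arrows $\gamma$ incident to $i$, of $z_k$ where $k$ is the endpoint of $\gamma$ other than $i$. If $N>z_i$, form the $z_i\times N$ matrix $C$ by juxtaposing $A_\alpha$ over all incoming non-loop arrows $\alpha\colon j\to i$ and $A_\beta^{*}$ over all outgoing non-loop arrows $\beta\colon i\to l$; pick a unit vector $u$ in its (nonzero) kernel, choose the new row/column of each of these arrows so that the single adjoined row $[\,a_\alpha\mid\dots\mid b_\beta^{*}\mid\dots\,]$ of $C^{B}$ equals $tu^{*}$ for a scalar $t>0$ with $t^{2}$ outside the finite set $\operatorname{spec}(CC^{*})$, and put all new loop entries equal to $0$. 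Since $Cu=0$ the adjoined row is orthogonal to the rows of $C$, so $C^{B}C^{B*}=CC^{*}\oplus[t^{2}]$; on the other hand the equations $S_iB_\alpha=B_\alpha S_j$ together with the equations $S_iB_\beta^{*}=B_\beta^{*}S_l$ (equivalent to $S_lB_\beta=B_\beta S_i$) stack to $S_iC^{B}=C^{B}\widehat S$ for a unitary block-diagonal $\widehat S$, whence $S_i$ commutes with $C^{B}C^{B*}$, whose $t^{2}$-eigenspace is exactly $L$. If instead $N\le z_i$, then $z_i<\Delta_i(z)$ forces $i$ to carry a loop $\varepsilon$ and $z_i\ge1$; take $B_\varepsilon=\bigl[\begin{smallmatrix}A_\varepsilon&0\\ h&\mu\end{smallmatrix}\bigr]$ with $h$ a nonzero row vector and $\mu\notin\operatorname{spec}(A_\varepsilon)$, and all other new entries $0$; then $\mu$ is a simple eigenvalue of $B_\varepsilon$ with eigenspace $L$, and $S_i$ commutes with $B_\varepsilon$ by the autometry equation at $\varepsilon$. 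In either case $S_i=P\oplus[s]$.

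Granting this, the arrow equations of $B$ at $i$ split into their $V_i$-parts and $L$-parts. The $V_i$-parts are exactly the arrow equations of $A$ for the system $(P,(S_k)_{k\ne i})$, so this is an autometry of $A$; since $A$ is indecomposable, Theorem~\ref{th_un3.1.2}(b) gives $P=\lambda I$ and $S_k=\lambda I$ $(k\ne i)$ for some $\lambda$ with $|\lambda|=1$. The $L$-parts then read $s\cdot(\text{new entry})=\lambda\cdot(\text{new entry})$ for each adjoined row, column, and loop-corner off-diagonal entry (e.g. $sh=hP=\lambda h$ in the loop case, $s\,a_\alpha=a_\alpha S_j=\lambda a_\alpha$ in the first case), and because by construction at least one new entry is nonzero we get $s=\lambda$. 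Hence $S=\lambda\mathbf{1}_B$, so every autometry of $B$ is scalar and $B$ is indecomposable of dimension $z+e_i$.

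I expect the real work to be the second paragraph: choosing the adjoined entries so that the commutation imposed on $S_i$ actually pins it down to block-diagonal form. The clean trick is to read off the surplus $\Delta_i(z)-z_i>0$ either as a kernel vector of the combined arrow matrix $C$ (which makes the new line an orthogonal complement, hence a genuine eigenspace summand of $C^{B}C^{B*}$) or, when the surplus is carried by loops, as a fresh eigenvalue of an extended loop operator; the two-case bookkeeping is precisely what ensures a nonzero new entry survives to force $s=\lambda$ at the end. The block computations verifying the splitting of the arrow equations in the third paragraph are routine.
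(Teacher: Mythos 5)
Your proof is correct, and it reaches the conclusion by a genuinely different route from the paper's. The paper first replaces every non-loop arrow leaving $i$ by its adjoint, so that all non-loop arrows incident to $i$ point into $i$, and then splits on whether $i$ carries a loop: in the loop case it does not extend $A_\alpha$ at all but \emph{replaces} one loop matrix by the nilpotent Jordan block of size $z_i+1$ (padding the other matrices at $i$ with zero rows/columns), while in the loopless case it invokes the decomposition $A^-=(P_1\otimes I_{m_1})\oplus\cdots\oplus(P_t\otimes I_{m_t})$ of Theorem \ref{th_un3.1.2}(a) to single out one column block of $[A_{\alpha_1}|\cdots|A_{\alpha_l}]$ to which a fresh, strictly smaller singular value is appended, and extracts scalarity of autometries from Lemma \ref{lem_un2.1.1}(b) applied to that block. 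Your case split is different (whether the non-loop contribution $N$ to $\Delta_i(z)$ already exceeds $z_i$), both of your cases are honest one-dimensional extensions of $A$, and your mechanism is uniform: the adjoined line $L$ is rigged to be a simple eigenspace of an operator with which $S_i$ must commute --- $C^{B}C^{B*}$ with the fresh singular value $t^2$ in the first case, the extended loop matrix with the fresh eigenvalue $\mu$ in the second --- so that $S_i=P\oplus[s]$, after which only Theorem \ref{th_un3.1.2}(b) is needed. What your version buys is economy and self-containment: it bypasses Theorem \ref{th_un3.1.2}(a) and all canonical-form bookkeeping, and the nonvanishing of an adjoined entry (hence $s=\lambda$) is immediate from $tu^{*}\neq 0$, resp.\ $h\neq 0$; what the paper's version buys is that the new representation emerges essentially in canonical form, matching the machinery used elsewhere in Section \ref{s_un3}. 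Your elementary observation that $N\le z_i$ together with $z_i<\Delta_i(z)$ forces $m_{ii}\ge 1$ and $z_i\ge 1$ is the one bookkeeping point that must be (and is) checked.
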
                                      \medskip

\begin{proof} Let
$A$ be an indecomposable
unitary $z$ representation
and $z_1<\Delta_1(z)$. We can
assume that each starting
from the vertex 1 arrow is a
loop (replacing each $\lambda
:1\to j,\ j\ne 1,$ by
$\lambda^*: j\to 1$ and,
respectively,
 $A_{\lambda}$ by $A_{\lambda^*}^*$).

     1) Assume first that there is  a loop $\alpha: 1\to 1$ and
define a unitary $z+e_1$
representation $H$ in which
\begin{itemize}
  \item
$H_{\alpha} $ is the
nilpotent Jordan block of
size $(z_1+1)\times (z_1+1)$,

  \item
$H_{\beta}:=A_{\beta}\oplus
0_{11}$ for each
 $\beta:1\to 1,\ \beta\ne\alpha$;

  \item
$H_{\gamma}:=
A_{\gamma}\oplus 0_{10}$ for
each $\gamma: j\to 1,\ j\ne
1$; and
  \item
$H_{\delta}:=A_{\delta}$ for
each $\delta:j\to k,\ k\ne
1$.
\end{itemize}
The representation $H$ is
indecomposable.

     Indeed, let $A^-$ and $H^-$ denote the restrictions of $A$ and $H$
on the subquiver
$Q^-:=Q\setminus\alpha.$ By
Theorem \ref{th_un3.1.2}(a),
 we may assume that
$$
A^-=(P_1\otimes
I_{m_1})\oplus\dots\oplus(P_t\otimes
I_{m_t}),
$$
where $P_1,\dots,P_t$ are
nonisometric indecomposable
representations of $Q^-,$
$P_1$ is the zero
representation of dimension
$e_1$, and $$ m_1\ge 0,\ \
m_2>0,\ \dots,\ m_t>0.$$
Clearly,
$$
H^-=(P_1\otimes
I_{m_1+1})\oplus (P_2\otimes
I_{m_2})\oplus\dots\oplus
(P_t\otimes I_{m_t}).
$$

Let $$S=(S_1,
S_2,\dots):H\stackrel{\sim}{\to}H.$$
Since
$S:H^-\stackrel{\sim}{\to}H^-$,
by Theorem
\ref{th_un3.1.2}(a)
                      \begin{align}
S&=({\bf 1}_{P_1}\otimes
U_1)\oplus\dots\oplus
({\bf 1}_{P_t}\otimes U_t),                  \notag  \\
S_1&=U_1^{(d_{11})}\oplus\dots\oplus
U_t^{(d_{1t})},
\label{un-3.3}
                       \end{align}
where
$$(d_{1j},\dots,d_{pj})=\mbox{dim}(P_j)$$
 and $U_j$ is a unitary matrix $(1\le j\le t)$.
Since
$$S_1H_{\alpha}=H_{\alpha}S_1,$$
$H_{\alpha}$ is a Jordan
block and $S_1$ is a unitary
matrix, we have $S_1=aI,\
a\in{\mathbb C}.$ The
representation $A$ is
indecomposable, so that
 $d_{1j}\ne 0$ and by (\ref{un-3.3})
$U_j=aI$ for all $1\le j\le
t.$ Hence $S=a{\bf 1}_H$ and
$H$ is indecomposable by
Theorem \ref{th_un3.1.2}(b).

2) There remains the case
$m_{11}=0$. Let
$$\alpha_1:j_1\to 1,\ \dots,\
\alpha_l:j_l\to 1$$ be all
the arrows stopping at 1. We
denote by $A^-$ the
restriction of $A$ on the
subquiver
$$Q^-:=Q\setminus\{1;
\alpha_1,\dots,\alpha_l\}.$$
By Theorem
\ref{th_un3.1.2}(a), we may
assume that
$$
A^-=(P_1\otimes
I_{m_1})\oplus\dots\oplus
(P_t\otimes I_{m_t}),
$$
where $P_1,\dots,P_t$ are
nonisometric indecomposable
unitary representations of
$Q^-$.

Let  $(S_2,\dots,S_p): A^-
\stackrel{\sim}{\to}A^-$. By
Theorem \ref{th_un3.1.2}(a),
$$
S_i=(I_{d_{i1}}\otimes
U_1)\oplus\dots\oplus
       (I_{d_{it}}\otimes U_t),
$$
where $(d_{2j},\dots,d_{pj})=
{\dim}P_j.$ For an arbitrary
unitary $z_1\times z_1$
matrix  $S_1$, we define
$\tilde A$ by means of
$$S=(S_1,
S_2,\dots,S_p):A\stackrel{\sim}{\to}{\tilde
A}$$ (then $\tilde A^-=A^-)$.
Taking into account that
$${\tilde A}_{\alpha_{\tau}}=
               S_1^{-1}
               A_{\alpha_{\tau}}S_{j_{\tau}}$$
and partitioning the sets of
columns of every
$A_{\alpha_{\tau}}$ and $
{\tilde A}_{\alpha_{\tau}}$
in the same manner as
$S_{j_{\tau}}$, we obtain
$$B:=[ A_{\alpha_1}|\cdots|
A_{\alpha_l}]=
        [B_1|\cdots|B_m]$$ and
$${\tilde B}:=[ {\tilde
A}_{\alpha_1}|\cdots| {\tilde
A}_{\alpha_l}]= [{\tilde
B}_1|\cdots|{\tilde B}_m],$$
where
$$m=\sum_{\tau=1}^l(d_{j_{\tau}1}+\dots
+d_{j_{\tau}t})$$ and
$${\tilde
B}_i=S_1^{-1}B_iU_{f(i)}$$
for a certain $f(i)\in
\{1,\dots,t\}.$

 Let $z_1\times u_i$ be the size of $B_i$ and put
$$r_i=\mbox{rank}[B_1|\cdots|B_{i-1}|B_{i+1}|\cdots|B_m].$$
$B$ is a $z_1\times
\Delta_1(z)$ matrix and $z_1<
\Delta_1(z)$, so
$z_1-r_i<u_i$ for a certain
$i$. Since $S_1$ and
$U_{f(i)}$ are arbitrary
unitary matrices, by Lemma
\ref{lem_un2.1.1}(a) there
exists $S$ such that
$${\tilde B}=\left[
              \begin{tabular}{c|c|c|c|c|c|c}
$C_1$&$\cdots$&$C_{i-1}$&$C_i$&$C_{i+1}$&$\cdots$&$C_m$\\
0&$\cdots$&0&$D$&0&$\cdots$&0
              \end{tabular}\right]\! ,
$$
where the rows of
$$[C_1|\cdots|C_{i-1}|C_{i+1}|\cdots|C_m]$$
are linearly independent and
$D$ is a $(z_1-r_i)\times
u_i$ matrix of the form
$$\mbox{diag}(a_1,\dots,a_n)\oplus
0_{kh}$$ with real
$a_1\ge\dots\ge a_n>0$. Since
${\tilde A}$ is
indecomposable and
$z_1-r_i<u_i$, we have  $k=0$
and $ h>0$.

     Let $a_{n+1}$ be a real number such that $a_n>a_{n+1}>0$.
The replacement $D$ by
$$D'=\mbox{diag}(a_1,\dots,a_n,a_{n+1})\oplus
0_{0,h-1}$$
changes ${\tilde B}$ to a new
matrix ${\tilde B}'$ and
${\tilde A}$ to a new
representation $H$ of
dimension $z+e_1$.

     Let  $R:H\stackrel{\sim}{\to}H$. Since $H$ and $A$
coincide on $Q^-$ and
$$(R_2,\dots,R_p): A^-
\stackrel{\sim}{\to}A^-,$$ by
Theorem \ref{th_un3.1.2}(a)
the matrices $R_2,\dots,R_p$
have the form
$$ R_j=(I_{d_{j1}}\otimes
V_1)\oplus\dots\oplus
(I_{d_{j\tau}}\otimes
V_\tau)$$ with unitary
$V_1,\dots,V_\tau$. By $$
R_1^{-1}{\tilde
B}'(R_{j_1}\oplus\dots\oplus
R_{j_l}) ={\tilde B}',$$
$R_1$ has the form
$R_{11}\oplus R_{12},$ where
$$R_{12}^{-1}D'V_{f(i)}=D'.$$
Lemma \ref{lem_un2.1.1}
implies
$$R_{12}=R_{13}\oplus [c].$$
Putting $${\tilde
R}_1=R_{11}\oplus
R_{13},\qquad {\tilde
R}_j=R_j\ (j>1),$$ we have
${\tilde R}:{\tilde
A}\stackrel{\sim}{\to}{\tilde
A}.$ By Theorem
\ref{th_un3.1.2}(b),
$${\tilde R}_j=aI,\qquad 1\le j\le
p,$$ for some $a\in{\mathbb
C}$, so $$V_j=aI,\qquad 1\le
j\le \tau.$$ In particular,
$V_{f(i)}=aI$ and, since
$$R_{12}^{-1}D'V_{f(i)}=D',$$
$c=a$ and $R_1=aI.$ Therefore
$R=a{\bf 1}_H$ and $H$ is
indecomposable by Theorem
\ref{th_un3.1.2}(b).
\end{proof}
\medskip

\begin{proof}%
[Proof of Theorem
\ref{th_un_3.2.1}] Let $U(Q)$
denote the
 set of dimensions of indecomposable unitary representations of $Q$.
Lemma \ref{lem_un3.2.3}
implies $U(Q)\subset D(Q)$.

     Let $u\in D(Q).$ Then $u_i\ne 0$ for a certain $i$.
Using Lemma
\ref{lem_un3.2.2}(ii), we
select a sequence
$$u_1:=e_i,\,u_2,\dots,u_t:=u$$
in $D(Q)$ such that
$$u_2-u_1,\dots,u_t-u_{t-1}\in\{e_1,\dots,e_p\}.$$
By Lemma \ref{lem_un3.2.4},
$$\{u_1,\dots,u_t\}\subset
U(Q),\qquad D(Q)\subset
U(Q).$$ \end{proof}


\subsection{The number of parameters in an indecomposable unitary
representation}
\label{ss_un3.3}
            By the {\it number of real (complex) parameters} of a
unitary representation $A$ we
mean the number of circles
(stars) in the scheme ${\cal
S}(A^{\infty})$.
 Recall that to circles correspond positive
real numbers in $A^{\infty}$,
and to stars correspond
complex numbers; the other
entries in $A^{\infty}$ are
zeros.

Kac \cite[Theorem C]{a_un7}
proved that the maximal
number of paremeters in an
indecomposable (non-unitary)
representation of dimension
$d$ over an algebraically
closed field is
$1-\varphi_Q(d)$, where
    $$
\varphi_Q(x)=x_1^2+\dots+x_p^2-
\sum_{i,j=1}^p m_{ij}x_ix_j
    $$
is a ${\mathbb Z}$-bilinear
form called the {\it Tits
form} of the quiver $Q,$ and
$m_{ij}$ is the number of
arrows $i\to j$ and $i\gets
j$.

We say that a zone (see page
\pageref{page_un_uyy}) is in
{\it general position} if all
its diagonal entries are
distinct and, if it is an
equivalence zone, nonzero. A
unitary representation $A$ is
said to be in {\it general
position} if all zones in
$A^{\infty}$ are in general
position.
\medskip

\begin{theorem}\label{th_un3.3.1}
{\rm(a)} {\it
 For every $d\in
D(Q)\ ($see page
\pageref{page_un_hyg}$)$
there exists an
indecomposable canonical
unitary $d$ representation of
general position, its scheme
is uniquely determined by
$d$.}

{\rm(b)} {\it An
indecomposable unitary $d$
representation $A$
 has $\sum d_i-1$ real parameters and at most
$$
1-\varphi_Q(d)+\frac 12\sum
d_i(d_i-1)
 $$
complex parameters; this
number is reached if and only
if $A$ is in general
position.}
\end{theorem}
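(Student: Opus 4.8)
The plan is to analyze the reduction algorithm zone-by-zone and count, in each zone, how many of the scheme's symbols (circles and stars) can take values that are genuinely free, i.e.\ independent of the choices already made in deeper zones. The two parts are somewhat separate: part (a) is an existence/uniqueness statement about the scheme of a general-position representation, and part (b) is the parameter count. I would prove (a) first, since the general-position scheme it produces is exactly the one attaining the bound in (b).

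For part (a), I would argue as follows. Given $d\in D(Q)$, Lemma~\ref{lem_un3.2.2} and Lemma~\ref{lem_un3.2.4} (as used in the proof of Theorem~\ref{th_un_3.2.1}) give an indecomposable unitary $d$ representation; I would run the reduction algorithm and observe that, in each zone encountered, the conditions of Theorem~\ref{th_un2.2.2} — strict inequalities $a_{ij}>a_{i+1,j+1}$ for unlinked circles, strict $a_{\alpha,\beta}\succ a_{\alpha+t,\beta+t}$ whenever the relevant column block is singular, and distinctness/nonvanishing of the diagonal entries — can all be imposed simultaneously (the non-strict cases form a proper closed subset of the admissible parameter space). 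Thus a general-position representation exists. Uniqueness of its scheme: the scheme records only the \emph{combinatorial} data (which zones are equivalence vs.\ similarity, sizes, stair structure, linkage of circles), and in general position these are forced — equalities among diagonal entries never occur, no circle gets value $0$, so each stair contributes exactly one block, linked pairs never arise, etc. Hence the scheme depends only on $d$. One must still check that general position does not destroy indecomposability, but this is automatic: the constructions in Lemmas~\ref{lem_un3.2.3}--\ref{lem_un3.2.4} produce an indecomposable representation, and we only \emph{perturb} its canonical parameters within the zone-filling rules, which by Theorem~\ref{th_un2.2.2} keeps it canonical; indecomposability is characterized intrinsically by Theorem~\ref{th_un3.1.2}(b), and a small perturbation of a representation with scalar autometry group still has scalar autometry group (this is the point where I'd be most careful — see below).

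For part (b), the real-parameter count $\sum d_i - 1$: each arrow $\alpha\colon i\to j$ after reduction contributes an equivalence zone for its leading singular block (and further equivalence zones nested inside similarity zones, etc.), and the circles across \emph{all} the equivalence zones of $A^\infty$ are exactly the nonzero singular values that survive the whole reduction. The key combinatorial identity is that in an indecomposable representation the total number of surviving positive singular values, counted over all zones, equals $(\sum d_i)-1$; I would prove this by induction on the number of arrows (or on depth), peeling off one arrow at a time as in the proof of Lemma~\ref{lem_un3.2.4}, where at each stage the block $[A_\alpha|\cdots|A_\gamma]$ attached at a vertex of in-degree contributing $\Delta$ yields exactly $\min$-rank-many circles and indecomposability forces this rank to be maximal, the ``$-1$'' coming from the single global scalar. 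For the complex (star) parameters: the stars are the diagonal entries of the similarity zones, which are the eigenvalues-with-multiplicity of the operators built from loops and (after folding) closed walks; counting the stars that are \emph{free} in general position and comparing with the Tits-form bound reduces to the identity
\[
\#\{\text{stars}\} \;=\; 1-\varphi_Q(d)+\tfrac12\sum d_i(d_i-1)
\]
in general position, with strict inequality otherwise (coincidences among stars under a common stair, or a singular column block forcing $\succ$ to collapse to $=$, each kill one parameter). I would verify this identity by expanding $\varphi_Q(d)=\sum d_i^2-\sum m_{ij}d_id_j$ and matching $\sum d_i^2-\tfrac12\sum d_i(d_i-1)=\tfrac12\sum d_i(d_i+1)$ against the zone-by-zone tally of diagonal blocks, using Lemmas~\ref{lem_un2.1.1}(b) and~\ref{lem_un2.1.2}(b) to track how strip subdivisions propagate sizes.

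\textbf{Main obstacle.} The hardest point is the bookkeeping that the total circle-count is exactly $\sum d_i-1$ and the total star-count matches the displayed formula \emph{for every} indecomposable $d$ — the reduction tree can be deep and the nested subdivisions are intricate, so a clean inductive invariant (one surviving singular value per ``unit of rank consumed,'' one eigenvalue per ``unit of dimension in a cyclic block'') must be formulated and shown to be preserved under both kinds of reduction step and under the arrow-reversal trick. Establishing that general position is simultaneously achievable with indecomposability (rather than just one or the other) is the secondary delicate point, since the parameter space in which we perturb is the one cut out by Theorem~\ref{th_un2.2.2}, and we must know the indecomposable locus is open and dense there.
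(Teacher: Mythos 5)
Your proposal correctly identifies the objects in play (zones, circles, stars, general position) and the overall shape of the argument, but at both of the places you yourself flag as delicate, the idea that actually closes the gap is missing, and the routes you sketch instead do not work as stated.

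For part (a), the paper does not use a ``small perturbation of a representation with scalar autometry group still has scalar autometry group'' argument, and it is good that you were suspicious of it: the issue is not openness of the scalar-stabilizer locus in some ambient space, but the fact that changing the diagonal entries of a zone can in principle change the combinatorics of the residual matrix problem. The paper's argument is a monotonicity statement, not a perturbation statement: replacing the entries of the first non-general-position zone $Z$ by general-position values can only \emph{narrow} the set of admissible transformations preserving all zones $\le Z$ (fewer coincidences among diagonal entries means a smaller stabilizer), so the autometry group of the new representation is contained in that of the old one, which was already scalar. This needs no topology and no openness/density claim.

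For part (b), the decisive device in the paper is absent from your proposal. To each zone $Z$ the paper attaches the stabilizer $J(Z)$ of all earlier zones, parametrized by unitary matrices $U_1,\dots,U_t$ of sizes $m_1,\dots,m_t$, and sets $\Delta_1(Z)=\sum m_i$, $\Delta_2(Z)=\sum m_i^2$. Lemmas \ref{lem_un2.1.1}(b) and \ref{lem_un2.1.2}(b) then give the exact identity $\Delta_1(Z)-\Delta_1(Z')=n^{\bullet}(Z)$ and the inequality $\Delta_2(Z)-\Delta_2(Z')\le 2n(Z)-n^{\bullet}(Z)-2n^{\star}(Z)$, with equality precisely for general-position zones; telescoping from $\Delta_i(Z_1)=(\sum d_j,\ \sum d_j^2)$ down to $\Delta_i(\infty)=1$ (this is where indecomposability enters, via Theorem \ref{th_un3.1.2}(b)) yields both counts at once. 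Your alternative --- induction on the number of arrows, peeling one arrow off as in Lemma \ref{lem_un3.2.4} --- is not well-founded: after reducing $A_{\alpha_1}$ the residual problem is a marked block matrix problem with the induced subdivisions, not a representation problem for a smaller quiver, so there is no smaller instance of the same statement to invoke. The ``clean inductive invariant'' you say must be formulated is exactly the pair $(\Delta_1,\Delta_2)$, and without it neither the equality $\sum_i n^{\bullet}(Z_i)=\sum d_i-1$ nor the star bound is established.
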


\begin{proof} We consider the
set of zones of a canonical
unitary representation
$A^{\infty}=
(A_{\alpha_1}^{\infty},\dots
,A_{\alpha_q}^{\infty})$ as
linearly ordered:
\begin{align}\nonumber
Z_1<Z_2\ &\text{if}\
i_1<i_2;\ \text{or}\ i_1=i_2\
\text{and}\ l_1<l_2;\\&
\text{or}\ i_1=i_2,\ l_1=l_2\
\text{and}\
\mbox{Bl}(Z_1)<\mbox{Bl}(Z_2)
\label{un-3*}
\end{align}
(see (\ref{un-1.1}) and
Section \ref{ss_un2.2});
where $Z_k\ (k=1,2)$ is a
zone of depth $l_k$ in
$A_{\alpha_{i_k}}^{\infty}$.

(a) Let $d\in D(Q)$. By
Theorem \ref{th_un_3.2.1},
there exists an
indecomposable unitary $d$
representation $A$.
    Let $A$ be not in general position, and let $Z$ be the first
(in the sense  of
(\ref{un-3*})) zone of
$A^{\infty}$ that is not in
general position. Changing
diagonal entries of $Z$, we
transform it  into a zone
$\tilde Z$ of general
position and $A^{\infty}$
into a new representation
$\tilde A$.  This exchange
narrows down the set of
admissible transformations
that preserve all zones $\le
Z$, and, by Theorem
\ref{th_un3.1.2}(b),
$A^{\infty}$ has only scalar
autometries (as an
indecomposable
representation), therefore,
${\tilde A}$ has only scalar
autometries and is
indecomposable too.

If $\tilde A$ is not in
general position, we repeat
this process for it, and so
on, until we obtain an
indecomposable $d$
representation  $B$ of
general position. Its scheme
is uniquely determined since,
for each zone $Z$ of $B$, the
set of admissible
transformations that preserve
all zones $\le Z$ (and hence
the matrix problem for the
remaining part of $B$) does
not depend on diagonal
entries of $Z$ such that it
is in general position.

     (b) Let $A$ be an indecomposable canonical $d$ representation,
and $Z$ be its zone or the
symbol $\infty$. Denote by
$J(Z)$ the set of all
isometries of the form $S:
A\stackrel{\sim}{\to} {\tilde
A}$ that preserve all zones
$<Z$ (all zones if
$Z=\infty$). As follows from
the algorithms from pages
\pageref{page_un_154} and
\pageref{page_un_26468},
$J(Z)$ consists of all
sequences of the form
$S=(S_1,\dots,S_p),$ where
$$
S_i=U_{\sigma(i1)}\oplus
U_{\sigma(i2)}\oplus\dots\oplus
U_{\sigma(it_i)},
$$
$$\sigma: \{(ij)\,|\,1\le i\le
p,\ 1\le j\le t_i\}\to
\{1,\dots,t\}$$ is a fixed
surjection, and $U_1,\dots,
U_t$ are arbitrary unitary
   matrices of fixed sizes $m_1\times m_1,\dots,\ m_t\times m_t$
(we will write $S=S(
U_1,\dots, U_t))$.

     Put
$$
\Delta_1(Z)=m_1+\dots+m_t,\
\ \
\Delta_2(Z)=m_1^2+\dots+m_t^2.
$$

     Let $Z\ne \infty$ and $Z'$ be the first zone after $Z$
($Z'=\infty$ if $Z$ is the
last zone of $A$). We will
prove that
 \begin{align}
 \Delta_1(Z)- \Delta_1(Z')& =n^{\bullet}(Z),
 \label{un-3.3.1}\\
 \Delta_2(Z)- \Delta_2(Z')&\le 2n(Z)-n^{\bullet}(Z)-2n^{\star}(Z),
 \label{un-3.3.2}
 \end{align}
and that the equality in
\eqref{un-3.3.2} holds if and
only if $Z$ is a zone of
general position; where
$n(Z)$ is the number of
entries in $Z$, and
$n^{\bullet}(Z)$ (resp.,
$n^{\star}(Z)$) is the number
of circles (resp., stars)
that correspond to the
diagonal entries of $Z$.

As follows from the
algorithms from pages
\pageref{page_un_154} and
\pageref{page_un_26468}, the
block $\mbox{Bl}(Z)$ is
reduced by transformations
\begin{equation}
    \text{Bl}(Z)\mapsto U_i^{-1}\text{Bl}(Z)U_j,
    \label{un-3.3.3}
\end{equation}
where $$S=S(U_1,\dots,U_t)\in
J(Z)$$ and  $i$ and $j$ are
determined by $Z$; moreover,
this $S$ is contained in
$J(Z')$ if and only if
\eqref{un-3.3.3} preserves
$Z$.

(i) Let $i\ne j$, say, $i=1$
and $j=2$. Then, by Lemma
\ref{lem_un2.1.1},
$$Z=\text{Bl}(Z)=a_1I_{r_1}\oplus\dots\oplus
a_{k-1}I_{r_{k-1}} \oplus
0_{xy},\ r_{\alpha}\ge 1,$$
$x\ge 0$, and $y\ge 0$. The
transformation
\eqref{un-3.3.3} preserves
$Z$ if and only if
\begin{equation}
   U_1=V_1\oplus\dots\oplus V_k \ \ \text{and}  \ \
   U_2=V_1\oplus\dots\oplus V_{k-1}\oplus V_{k+1},
   \label{un-3.3.4}
\end{equation}
where $V_1,\dots,V_{k+1}$ are
unitary matrices of sizes
$$r_1\times
r_1,\dots,r_{k-1}\times
r_{k-1},\ x\times x,\ y\times
y.$$ Hence, $J(Z')$ consists
of all $S\in J(Z)$ with $U_1$
and $U_2$ of the form
\eqref{un-3.3.4}, that is,
$$S=S(V_1,\dots V_{k+1},
U_3\dots U_t).$$ Therefore,
$$\Delta_1(Z')=r_1+\dots+r_{k-1}+x+y+m_3+\dots+m_t,$$
$$\Delta_2(Z')=r_1^2+\dots+r_{k-1}^2+x^2+y^2+m_3^2+\dots
+m_t^2.$$ By
\eqref{un-3.3.3},
$\text{Bl}(Z)$ has size
$m_1\times m_2$,
$$m_1=r_1+\dots+r_{k-1}+x,\quad
m_2=r_1+\dots+r_{k-1}+y,$$ so
$$n(Z)=m_1m_2,\quad
n^{\bullet}(Z)=
r_1+\dots+r_{k-1},\quad
n^{\star}(Z)=0.$$ We have
$$\Delta_1(Z)-\Delta_1(Z')=
r_1+\dots+r_{k-1}=
n^{\bullet}(Z)$$ and
\begin{align*}
\Delta_2(Z)&-\Delta_2(Z')=
(r_1+\dots+r_{k-1}+x)^2\\&\qquad
+
   ( r_1+\dots+r_{k-1}
      +y)^2-r_1^2-\dots-r_{k-1}^2-x^2-y^2
   \\&=
   [(r_1+\dots+r_{k-1}+x)-( r_1+\dots+r_{k-1}+y)]^2
   \\&\qquad +
   2 (r_1+\dots+r_{k-1}+x)( r_1+\dots+r_{k-1}+y)
   \\&\qquad-
   r_1^2-\dots-r_{k-1}^2-x^2-y^2
  \\&=(x-y)^2+2n(Z)- r_1^2-\dots-r_{k-1}^2-x^2-y^2
  \\&=-2xy+2n(Z)- r_1^2-\dots-r_{k-1}^2
  \\&\le
  2n(Z)-r_1-\dots-r_{k-1}=2n(Z)-n^{\bullet}(Z).
\end{align*}
Moreover, we have the
equality if and only if
$$r_1=\dots=r_{k-1}=1,\qquad xy=0,$$ i.e, $Z$ is in
general position.

(ii) Let $i=j$, say, $i=j=1$.
Then, by Lemma
\ref{lem_un2.1.2},
$\text{Bl}(Z)=[F_{\alpha\beta}]$,
where
\begin{align*}
F_{\alpha\beta}&=0
\text{\quad if
$\alpha>\beta$, and}\\
F_{\alpha\alpha}&=
\lambda_{\alpha}I_{r_{\alpha}},\quad
r_{\alpha}\ge 1,\quad
r_1+\dots+r_k=m_1.
\end{align*}
The transformation
\eqref{un-3.3.3} preserves
$$Z=\{F_{\alpha\beta}\,|\,\alpha\le\beta\}$$
if and only if
$$U_1=V_1\oplus\dots\oplus
V_k,$$ where $V_1,\dots,V_k$
are unitary matrices of sizes
$$r_1\times r_1,\dots,\
r_k\times r_k.$$ Hence,
$J(Z')$ consists of all $S\in
J(Z)$ with $$U_1=
V_1\oplus\dots\oplus V_k,$$
that is, $$S=S(V_1,\dots,
V_{k}, U_2,\dots, U_t).$$ So
$$\Delta_1(Z)-\Delta_1(Z')=m_1-
r_1-\dots-r_k= 0
=n^{\bullet}(Z)$$ and
\begin{align*}
\Delta_2(Z)-\Delta_2(Z')&=
(r_1+\dots+r_k)^2
-r_1^2-\dots-r_k^2 \\&=2
\sum_{\alpha\le\beta}r_{\alpha}r_{\beta}
- 2(r_1^2+\dots+r_k^2)
\\&=2n(Z)- 2(r_1^2+\dots+r_k^2)
\\&\le 2n(Z)-2(r_1+\dots+r_k)
\\&= 2n(Z)-2n^{\star}(Z).
\end{align*}
Moreover, we have the
equality if and only if
$$r_1=\dots=r_k=1,$$ that is, $Z$
is in general position.

Hence, the relations
\eqref{un-3.3.1} and
\eqref{un-3.3.2} hold.

Let $Z_1<\dots<Z_r$ be all
zones of $A$ ordered by
(\ref{un-3*}), and let $$\dim
A=(d_1,\dots,d_p).$$ Then
$Z'_i=Z_{i+1}$ for $i<r$, and
$Z'_r=\infty.$ Since $J(Z_1)$
consists of all sequences
$S=(S_1,\dots,S_p)$ of
unitary $d_1\times d_1,\dots,
d_p\times d_p$ matrices,
$$\Delta_1(Z_1)=d_1+\dots+d_p,\quad
\Delta_2(Z_1)=d_1^2+\dots+d_p^2.$$
Since $A$ is indecomposable,
by Theorem
\ref{th_un3.1.2}(b)
$J(\infty)$ consists of all
sequences
$$S=\lambda(I_{d_1},\dots,
I_{d_p}),\qquad
\lambda\in{\mathbb C},\quad
|\lambda|=1,$$ so
$$S=S([\lambda]),\qquad
\Delta_1(\infty)=\Delta_2(\infty)=1.$$
By \eqref{un-3.3.1},
\begin{align*}
d_1+\dots+d_p-1&=\Delta_1(Z_1)-
\Delta_1(\infty)
\\&=\sum_{i=1}^r
(\Delta_1(Z_i)-
\Delta_1(Z'_i)) =\sum_{i=1}^r
n^{\bullet}(Z_i)
\end{align*}
is the number of circles in
$S(A^{\infty})$, that is, the
number of real parameters in
$A$.

By \eqref{un-3.3.2},
\begin{align*}
d_1^2+\dots&+d_p^2-1=\Delta_2(Z_1)-
\Delta_2(\infty)
\\&=\sum_{i=1}^r
(\Delta_2(Z_i)-
\Delta_2(Z'_i))
\\&\le
2\sum_{i=1}^r n(Z_i)-
\sum_{i=1}^r
n^{\bullet}(Z_i)-
2\sum_{i=1}^r n^{\star}(Z_i).
\end{align*}
 But $\sum_{i=1}^r n(Z_i)$ is the number of entries in $A_{\alpha_1},
\dots,A_{\alpha_q}$, hence,
it is equal to
$$\sum_{i,j=1}^p
m_{ij}d_id_j,$$ where
$m_{ij}$ is the number of
arrows $i\to j$ and
$i\leftarrow j$;
$$n^{\star}(A):=\sum_{i=1}^r
n^{\star}(Z_i)$$ is the
number of complex parameters
in $A$. Therefore,
\begin{align*}
n^{\star}(A) &\le\sum
m_{ij}d_id_j- \frac 12(\sum
d_i^2-1)- \frac 12(\sum
d_i-1)
 \\&=1-[\sum d_i^2-\sum
m_{ij}d_id_j] +\frac 12\sum
(d_i^2-d_i)
 \\&=
1-\varphi_Q(d)+\frac 12\sum
d_i(d_i-1).
\end{align*}
We have the equality if and
only if all $Z_i$ are in
general position, i.e., $A$
is in general position.
\end{proof}

The proof implies

\begin{corollary}\label{cor_un3.3.2}
{\rm(a)} { Let $d\in D(Q)$
and $m=\max
\{d_1,\dots,d_p\}$. Then
there exists an
indecomposable canonical $d$
representation of general
position with entries in
$\{0,\ 1,\dots,m\}$}.

{\rm(b)} {A decomposable
unitary $d$ representation
has less than $\sum d_i-1$
real parameters and less than
$$1-\varphi_Q(d)+\frac 12\sum
d_i(d_i-1)$$ complex
parameters.}
\end{corollary}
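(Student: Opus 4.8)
The plan is to read both statements off the proof of Theorem~\ref{th_un3.3.1}. For part~(a): by Theorem~\ref{th_un3.3.1}(a) there is an indecomposable canonical $d$ representation of general position whose scheme is uniquely determined by $d$, so only the numerical values of the entries remain to be chosen, and I would assign them zone by zone, starting from the deepest zone, as prescribed by Theorem~\ref{th_un2.2.2}. In a zone of general position every non-point symbol lies on the diagonal: an equivalence zone is $\mathrm{diag}(a_1,\dots,a_{k-1})$ together with the zero padding of its canonical part, a similarity zone is $\mathrm{diag}(\lambda_1,\dots,\lambda_k)$, and the only conditions left are the strict chains $a_1>\dots>a_{k-1}>0$ and $\lambda_1\succ\dots\succ\lambda_k$. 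Since each block $\mathrm{Bl}(Z)$ of each derived matrix of $A^{\infty}$ is a submatrix of some $A_{\alpha_s}^{\infty}\in{\mathbb C}^{d_{j_s}\times d_{i_s}}$, it has at most $m=\max\{d_1,\dots,d_p\}$ rows and at most $m$ columns, so every zone carries at most $m$ circles or stars. I can therefore realise each chain by values $m,m-1,m-2,\dots$ taken from $\{1,\dots,m\}$ (these are real, hence legitimate both as circles and as stars) and put $0$ at every point; conditions~1) and~2) of Theorem~\ref{th_un2.2.2} then hold automatically (strictness is free, and general position has no linked circles), which yields the desired representation with all entries in $\{0,1,\dots,m\}$.

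For the real parameters in part~(b): we may take $A=A^{\infty}$, and by Theorem~\ref{th_un3.1.2}(a) $A\simeq(P_1\otimes I_{m_1})\oplus\dots\oplus(P_t\otimes I_{m_t})$ with the $P_j$ nonisometric indecomposable and $m_1+\dots+m_t\ge2$ because $A$ is decomposable. In the notation of the proof of Theorem~\ref{th_un3.3.1}(b), $J(\infty)$ is the autometry group of $A$, which by Theorem~\ref{th_un3.1.2}(a) is parametrised by arbitrary unitary $m_j\times m_j$ matrices, so $\Delta_1(\infty)=m_1+\dots+m_t$. The equality \eqref{un-3.3.1} holds for every canonical representation, decomposable or not; telescoping it shows that the number of real parameters of $A$ equals $\Delta_1(Z_1)-\Delta_1(\infty)=\sum d_i-\sum m_j\le\sum d_i-2<\sum d_i-1$.

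For the complex parameters in part~(b): refine the decomposition to $A\simeq Q_1\oplus\dots\oplus Q_N$ with each $Q_\ell$ indecomposable, $N\ge2$, and $\delta^{(\ell)}:=\dim Q_\ell\in D(Q)$. Reducing a block direct sum of canonical representations proceeds independently on the summands, so the star count is additive, $n^{\star}(A)=\sum_\ell n^{\star}(Q_\ell)$, and Theorem~\ref{th_un3.3.1}(b) applied to each $Q_\ell$ gives $n^{\star}(A)\le\sum_\ell f(\delta^{(\ell)})$, where $f(z):=1-\varphi_Q(z)+\frac12\sum_i z_i(z_i-1)$. It therefore suffices to show that $f$ is strictly super-additive along such decompositions, i.e.\ $\sum_\ell f(\delta^{(\ell)})<f(d)$. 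Writing $\varphi_Q(z)=\sum_i z_i^2-\sum_{i,k}m_{ik}z_iz_k$ one obtains the polarisation identity $f(u+v)-f(u)-f(v)=2\sum_{i,k}m_{ik}u_iv_k-\sum_i u_iv_i-1$, and summing it over the $N$ summands gives
\[
f(d)-\sum_\ell f(\delta^{(\ell)})=\sum_{\ell<\ell'}\Bigl(2\sum_{i,k}m_{ik}\delta^{(\ell)}_i\delta^{(\ell')}_k-\sum_i\delta^{(\ell)}_i\delta^{(\ell')}_i\Bigr)-(N-1).
\]
The remaining work is to bound each pair term from below: the defining inequality $\delta M_Q\ge\delta$ for $\delta\in D(Q)$ forces $\sum_{i,k}m_{ik}\delta^{(\ell)}_i\delta^{(\ell')}_k\ge\sum_i\delta^{(\ell)}_i\delta^{(\ell')}_i$, so each term is $\ge\sum_i\delta^{(\ell)}_i\delta^{(\ell')}_i\ge0$, with a further gain of at least $1$ once the supports of $\delta^{(\ell)}$ and $\delta^{(\ell')}$ meet and at least $2$ once they are joined by an arrow. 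Making this precise is the one step I expect to be genuinely delicate: for $N\ge3$ a bound of $1$ per pair already wins, since there are $\binom{N}{2}\ge N$ pairs, but for $N=2$ one has to extract a $2$ from the single cross term, so the argument must treat $N=2$ separately and examine the finitely many exceptional dimensions $e_i$, $e_i+e_k$ (and summands whose support lies in $\{\bullet,\ \bullet\!\to\!\bullet\}$) by hand; everything else is routine bookkeeping on top of Theorems~\ref{th_un2.2.2}, \ref{th_un3.1.2} and~\ref{th_un3.3.1}.
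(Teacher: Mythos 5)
Parts of your argument coincide with the paper's. For (a) the paper simply says the statement follows from Theorems~\ref{th_un3.3.1}(a) and~\ref{th_un2.2.2}, and your zone-by-zone filling with decreasing values taken from $\{1,\dots,m\}$ is a correct elaboration of exactly that. For the real-parameter half of (b) you also do what the paper does: telescope \eqref{un-3.3.1} and use that $\Delta_1(\infty)=m_1+\dots+m_t\ge 2$ because $J(\infty)$ contains a non-scalar autometry.

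The complex-parameter half of (b) is where you leave the paper's route, and where your proof has a genuine gap. The paper stays inside the proof of Theorem~\ref{th_un3.3.1}(b): it reruns the last paragraphs with $\Delta_1(\infty)>1$ and $\Delta_2(\infty)>1$ in place of $\Delta_1(\infty)=\Delta_2(\infty)=1$. Your substitute --- additivity of $n^{\star}$ over indecomposable summands plus strict super-additivity of $f(z)=1-\varphi_Q(z)+\frac12\sum_i z_i(z_i-1)$ --- defers precisely the step that carries all the content, and that step is not ``routine bookkeeping'': your own polarisation identity $f(u+v)-f(u)-f(v)=2\sum_{i,k}m_{ik}u_iv_k-\sum_i u_iv_i-1$ evaluates to exactly $0$, not to something positive, when for instance $Q$ is a single vertex with one loop and $u=v=e_1$ (so $N=2$, $\delta^{(1)}=\delta^{(2)}=(1)$, $m_{11}=1$). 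There the decomposable canonical representation is $\operatorname{diag}(\lambda,\mu)$ with two stars, $f(e_1)+f(e_1)=2=f(2e_1)$, and no strict gain can be extracted from the single cross term, so the $N=2$ case you flag as delicate genuinely breaks down rather than merely requiring care. In addition, the additivity $n^{\star}(A)=\sum_{\ell}n^{\star}(Q_\ell)$ is asserted without justification; it requires comparing the zone structure of $A^{\infty}$ with that of the $Q_\ell^{\infty}$ (Theorem~\ref{th_un2.1.4}(c) is the relevant tool). To repair the argument you should either return to the telescoping of \eqref{un-3.3.2} with $\Delta_2(\infty)>1$ as the paper does --- checking carefully how the larger value of $\Delta_2(\infty)$ enters the final inequality --- or strengthen the per-pair lower bound on the cross terms beyond what the numerical inequality $\delta M_Q\ge\delta$ alone can give.
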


\begin{proof} (a) This statement
follows
from Theorems
\ref{th_un3.3.1}(a) and
\ref{th_un2.2.2}.

(b) This statement is proved
as Theorem
\ref{th_un3.3.1}(b), but, in
the last two paragraphs of
its proof, we must use
$\Delta_1(\infty)>1$ and
$\Delta_2(\infty)>1$ instead
of
$\Delta_1(\infty)=\Delta_2(\infty)=1$
since $J(\infty)$ contains a
non-scalar authometry in the
case of a decomposable
representation $A$.
\end{proof}


\section{Euclidean representations of a quiver}
\label{s_un4}
     Let $Q$ be a quiver with vertices $1,\dots,p$ and arrows
$\alpha_1,\dots,\alpha_q$. A
{\it Euclidean
representation} $A$ of
dimension
$$d=(d_1,\dots,d_p)\in
{\mathbb N}_0^p$$ will be
given by assigning a matrix
$A_{\alpha}\in{\mathbb
R}^{d_j\times d_i}$ to each
arrow $\alpha: i\to j;$ i.e.,
by the sequence
$$A=(A_{\alpha_1},\dots,
A_{\alpha_q}).$$ An ${\mathbb
R}$-{\it isometry}
$A\stackrel{\sim}{\to}_{\mathbb
R} B$ of Euclidean
representations $A$ and $B$
will be given by a sequence
$S=(S_1,\dots,S_p)$ of real
orthogonal matrices such that
$ S_jA_{\alpha}=B_{\alpha}S_i
$ for each arrow $\alpha:
i\to j$ (analogously,  $R:
A\stackrel{\sim}
{\to}_{\mathbb C} B$ denotes
an isometry in the sense of
Section 3).
A Euclidean $d$ representation $A$ 
is said to be ${\mathbb
R}$-{\it indecomposable} if
(i)~$d\ne(0,\dots,0)$ and
(ii) $A\simeq_{\mathbb R}
B\oplus C$ implies that $B$
or $C$ has dimension
$(0,\dots,0).$

For a sequence of complex
matrices
$$M=(M_1,\dots,M_n),$$ we
define the {\it conjugate
sequence} $${\bar M}=({\bar
M}_1,\dots,{\bar M}_n),$$ the
{\it transposed sequence}
$$M^T=(M_1^T,\dots,M_n^T),$$
and the {\it adjoint
sequence} $$M^*=\bar M^T.$$
Clearly, the Euclidean
representations are the
selfconjugate unitary
representations.


\subsection{A reduction to unitary representations}
\label{ss_un4.1}
 We give a standard reduction of the problem of classifying
Euclidean representations to
the problem of classifying
unitary representations.
\medskip

Let $\text{ind}(Q)$ and
$\text{ind}_{\mathbb R}(Q)$
denote complete systems of
nonisometric indecomposable
unitary representations and
non-${\mathbb R}$-isometric
${\mathbb R}$-indecomposable
Euclidean representations
respectively. Let us replace
each representation in
$\text{ind}(Q)$ that is
isometric to a Euclidean
representation by a Euclidean
one, and denote the set of
such by  $\text{ind}_0(Q)$
(if $A\in \text{ind}(Q)$ and
$S:
A{\stackrel{\sim}{\to}}_{\mathbb
C} {\bar A},$ then $A$ is
isometric to a Euclidean
representation if and only if
$S^T=S$; see Theorem
\ref{th_un4.2.2}). Denote by
$\text{ind}_1(Q)$ the set
consisting of all
representations from
$\text{ind}(Q)$ that are
isometric to their
conjugates, but not to a
selfconjugate, together with
one representation from each
pair
$\{A,\,B\}\subset\text{ind}(Q)$
such that
$$A{\not\simeq}_{\mathbb
C}\bar A\simeq_{\mathbb
C}B.$$

For a unitary $d$
representation
$$A=(A_{\alpha_1},
\dots,A_{\alpha_q}),$$ we
define the Euclidean $2d$
representation
 $$A^{\mathbb R}=(A_{\alpha_1}^{\mathbb R},
\dots,A_{\alpha_q}^{\mathbb
R}),$$ where
$A_{\alpha}^{\mathbb R}$, is
obtained from $A_{\alpha}$ by
replacing each entry $a+bi\
(a,b\in {\mathbb R})$ by the
block
$$
\begin{matrix}
 a & b \\
  -b&a
 \end{matrix}
$$
Since $$U^{-1}
\left[\genfrac{}{}{0pt}{}{a}{-b}\,
\genfrac{}{}{0pt}{}{b}{a}\right]
U=
\left[\genfrac{}{}{0pt}{}{a+bi}{0}
\genfrac{}{}{0pt}{}{0}{a-bi}\right]$$
with the unitary
$$U=\frac{1}{\sqrt{2}}
\left[\genfrac{}{}{0pt}{}{1}{i}\,
\genfrac{}{}{0pt}{}{-1}{i}\right],$$
we have
\begin{equation}
 A^{\mathbb R}\simeq_{\mathbb C}A\oplus{\bar A}.
 \label{un-4.1}
\end{equation}

\begin{theorem}\label{th_un4.1.1}
{\rm(a)} {Let $A$ and $B$ be
Euclidean representations of
a quiver $Q$. Then
$A\simeq_{\mathbb R} B$ if
and only if $A\simeq_{\mathbb
C} B$.}

{\rm(b)} {Every Euclidean
representation is $\mathbb
R$-isometric
 to a direct sum of indecomposable Euclidean representations, uniquely
determined up to $\mathbb
R$-isometry of summands.
Moreover,
\begin{equation}
{\rm ind}_{\mathbb R}(Q)=
{\rm ind}_0(Q)\cup
\{A^{\mathbb R}\,|\,A\in{\rm
ind}_1(Q)\}. \label{un-4.2}
\end{equation}}\par

{\rm(c)} {The set of
dimensions of ${\mathbb
R}$-indecomposable Euclidean
representations of $Q$
coincides with the set of
dimensions of indecomposable
unitary representations and
is equal to $D(Q)$ $($see
page
\pageref{page_un_hyg}$).$}
\end{theorem}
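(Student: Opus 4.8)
\emph{Part (a).} The plan is to establish (a), (b) and (c) in that order, with (b) and (c) resting on (a) together with Theorems~\ref{th_un3.1.2}, \ref{th_un_3.2.1}, \ref{th_un4.2.2} and Corollary~\ref{cor_un3.3.2}(a). One implication of (a) is trivial: a system of real orthogonal matrices is a system of unitary matrices, so $A\simeq_{\mathbb R}B$ implies $A\simeq_{\mathbb C}B$. For the converse, let $S=(S_i)$ be unitary with $S_jA_\alpha=B_\alpha S_i$ for each arrow $\alpha\colon i\to j$. Since the $A_\alpha$ and $B_\alpha$ are real, the conjugate system $\bar S$ also intertwines $A$ and $B$, so $W:=(\bar S_i^{-1}S_i)$ is an autometry of $A^{\mathbb C}$, and a short computation gives $\bar W=W^{-1}$, whence $W^T=W$. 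The crux is to produce a \emph{symmetric} unitary autometry $V$ of $A^{\mathbb C}$ with $V^2=W$: then $\bar V=V^{-1}$, so $R:=(S_iV_i^{-1})$ is a real, hence orthogonal, system, and since $V$ commutes with the $A_\alpha$ in the relevant sense, $R$ still intertwines $A$ and $B$, giving the desired $\mathbb R$-isometry. To obtain $V$, observe that the commutant $\mathcal C:=\{X\mid X_jA_\alpha=A_\alpha X_i\ \text{for all }\alpha\}$ is a unital algebra containing the diagonalizable unitary $W$; after replacing $W$ by $c^2W$ for a suitable unit scalar $c$ (which does not affect the existence of the sought square root) we may assume $-1$ is not an eigenvalue of $W$, and write $W=e^{iK}$ with $K$ Hermitian, all its eigenvalues in $(-\pi,\pi)$. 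Since $W$ has finitely many eigenvalues, $K$ is a polynomial in $W$, so $K\in\mathcal C$, and $W^T=W$ forces $K=\bar K$, i.e.\ $K$ is real symmetric; then $V:=e^{iK/2}$ is a polynomial in $K$, lies in $\mathcal C$, is symmetric and unitary, and satisfies $V^2=W$. Finding this symmetric square root is the main obstacle of the theorem.

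\emph{Part (b).} Existence of an $\mathbb R$-indecomposable decomposition follows by induction on $\sum d_i$. For the identity \eqref{un-4.2}, first check that every representation on its right-hand side is $\mathbb R$-indecomposable and that these are pairwise non-$\mathbb R$-isometric: a member of ${\rm ind}_0(Q)$ has indecomposable complexification, so any $\mathbb R$-splitting would complexify to a forbidden $\mathbb C$-splitting; for $A\in{\rm ind}_1(Q)$ one has $A^{\mathbb R}\simeq_{\mathbb C}A\oplus\bar A$ by \eqref{un-4.1}, so an $\mathbb R$-splitting of $A^{\mathbb R}$, complexified and run through Theorem~\ref{th_un3.1.2}, would force $A$ to be isometric to a selfconjugate representation, against the definition of ${\rm ind}_1(Q)$ --- here (a) is used to pull a $\mathbb C$-isometry back over $\mathbb R$; distinctness is argued similarly. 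Conversely, let $R$ be $\mathbb R$-indecomposable Euclidean and decompose $R^{\mathbb C}\simeq\bigoplus P_i^{m_i}$ into unitary indecomposables; this multiset is conjugation-invariant because $R$ is real, and $\mathbb R$-indecomposability of $R$ (via the fact that ${\rm End}_{\mathbb R}(R)$ is a real division algebra, or via Theorem~\ref{th_un4.2.2}) forces it to be one of three types: a single $P$ that is isometric to a Euclidean representation, whence $R$ is $\mathbb R$-isometric, by (a), to the chosen Euclidean model of $P$ in ${\rm ind}_0(Q)$; or $\{P,\bar P\}$ with $P\not\simeq\bar P$; or $\{P,P\}$ with $P\simeq\bar P$ but $P$ not isometric to a Euclidean representation --- in the last two cases $R^{\mathbb C}\simeq P\oplus\bar P$, so $R\simeq_{\mathbb R}P^{\mathbb R}$ by (a), with a conjugate of $P$ in ${\rm ind}_1(Q)$. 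Uniqueness of the $\mathbb R$-decomposition then follows: complexifying and invoking Theorem~\ref{th_un3.1.2} recovers the multiset of unitary indecomposable summands, and by the trichotomy above a Euclidean-realizable indecomposable arises only from an ${\rm ind}_0(Q)$-summand and a non-realizable conjugate pair only from a single ${\rm ind}_1(Q)$-summand, so that multiset determines the $\mathbb R$-indecomposable summands up to $\mathbb R$-isometry.

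\emph{Part (c).} By (b), the dimensions of $\mathbb R$-indecomposable Euclidean representations form $\{\dim R\mid R\in{\rm ind}_0(Q)\}\cup\{2\dim A\mid A\in{\rm ind}_1(Q)\}$. The inclusion $D(Q)\subseteq$ this set is easy: by Corollary~\ref{cor_un3.3.2}(a), for each $d\in D(Q)$ there is an indecomposable canonical unitary $d$-representation with entries in $\{0,1,\dots,m\}$, i.e.\ a real one; being $\mathbb C$-indecomposable it is also $\mathbb R$-indecomposable and gives a class of dimension $d$ in ${\rm ind}_0(Q)$. For the reverse inclusion, the ${\rm ind}_0(Q)$-dimensions already lie in $D(Q)$ by Theorem~\ref{th_un_3.2.1}, so it remains to check $2\dim A\in D(Q)$ for $A\in{\rm ind}_1(Q)$. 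Using the description of $D(Q)$ on page~\pageref{page_un_hyg}: if ${\rm supp}(\dim A)$ is connected and not in $\{\bullet,\ \bullet\!\to\!\bullet\}$, then $(\dim A)M_Q\ge\dim A$ yields $(2\dim A)M_Q\ge2\dim A$ with the same support, so $2\dim A\in D(Q)$; the remaining possibilities $\dim A=e_i$ and $\dim A=e_i+e_j$ with $m_{ij}=1$ are excluded by inspecting the essentially unique indecomposable unitary representations of those dimensions, which are isometric to real ones --- hence lie in ${\rm ind}_0(Q)$, not ${\rm ind}_1(Q)$ --- unless vertex $i$ carries a loop, in which case ${\rm supp}(2e_i)\notin\{\bullet,\ \bullet\!\to\!\bullet\}$ and $2e_i\in D(Q)$ once again.
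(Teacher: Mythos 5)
Your part (a) is correct but takes a genuinely different route from the paper's. The paper first treats an $\mathbb R$-indecomposable $A$: it writes the unitary intertwiner as $S=\Phi+i\Psi$ with $\Phi,\Psi$ real, derives $\Phi^T\Phi+\Psi^T\Psi={\bf 1}_A$, invokes Lemma \ref{lem_un4.1.2} to conclude that these selfadjoint $\mathbb R$-endomorphisms are scalar, and rescales the nonzero one; the general case of (a) is then deduced only after (b). Your descent argument --- extracting the symmetric unitary $W=\bar S^{-1}S$ in the commutant and taking a symmetric unitary square root of it by functional calculus --- handles arbitrary $A$ and $B$ in one stroke and is sound (the single interpolating polynomial on the finite union of the spectra of the $W_i$ does keep $V$ in the commutant). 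Part (c) follows the paper.

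The genuine gap is in part (b), at the trichotomy for an $\mathbb R$-indecomposable $R$ whose complexification is decomposable. This is the heart of the theorem, and you assert it rather than prove it; moreover both justifications you offer fail. First, ${\rm End}_{\mathbb R}(R)$ need not be a real division algebra here: ``$\mathbb R$-indecomposable'' refers to orthogonal direct sums, and, for instance, the nilpotent Jordan block of size $2$, viewed as a Euclidean representation of the one-loop quiver, is $\mathbb R$-indecomposable while its endomorphism algebra contains nilpotents. The correct substitute, Lemma \ref{lem_un4.1.2}, says only that every \emph{selfadjoint} $\mathbb R$-endomorphism is scalar. Second, Theorem \ref{th_un4.2.2} presupposes an \emph{indecomposable} unitary representation, which is exactly what $R^{\mathbb C}$ fails to be in the case at issue. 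Without the trichotomy you cannot exclude, say, $R^{\mathbb C}\simeq P\oplus P\oplus P$ with $P\simeq_{\mathbb C}\bar P$ not Euclidean-realizable, and both your identification \eqref{un-4.2} and the uniqueness of the $\mathbb R$-decomposition collapse. The paper closes this gap in its step 2): from Lemma \ref{lem_un4.1.2} it extracts a selfadjoint idempotent endomorphism $F=\Phi+i\Psi$ of $R$, shows $\Phi=\frac12{\bf 1}_A$, $\Psi^T=-\Psi$, $\Psi^2=-\frac14{\bf 1}_A$, brings $\Psi$ to the real orthogonal normal form of a skew-symmetric matrix, and reads off $R\simeq_{\mathbb R}B^{\mathbb R}\simeq_{\mathbb C}B\oplus\bar B$ with $B$ indecomposable and not Euclidean-realizable. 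You need this (or an equivalent) argument to complete part (b).
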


A {\it homomorphism}
(${\mathbb R}$-{\it
homomorphism}) $S:A\to B$ of
representations $A$ and $B$
of $Q$ is a sequence of
complex (real) matrices
$$S=(S_1,\dots,S_p)$$ such
that
$S_jA_{\alpha}=B_{\alpha}S_i$
for each arrow $\alpha:i\to
j$. Clearly, an isomorphism
$S$ is an isometry if and
only if $S^*=S^{-1}$.

\begin{lemma}\label{lem_un4.1.2}
 {The following properties are
equivalent for a unitary
$($Euclidean$)$
representation $A$:}

{\rm(i)} {$A$ is decomposable
$({\mathbb
R}$-decomposable$)$}.

{\rm(ii)} {\it There exists
an endomorphism $({\mathbb
R}$-{\it endomorphism}$)\
F:A\to A$ such that
$F=F^*=F^2\notin\{{\bf
0}_A,\,{\bf 1}_A\}.$}

{\rm(iii)} {There exists a
nonscalar selfadjoint
endomorphism
$({\mathbb R}$-{\it endomorphism}$)\ S=S^*: A\to A$.}\\[-3mm]    
\end{lemma}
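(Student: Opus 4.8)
The plan is to prove the cycle of implications (i) $\Rightarrow$ (ii) $\Rightarrow$ (iii) $\Rightarrow$ (i). The unitary and the Euclidean cases run word for word in parallel — one only reads ``unitary''/``Hermitian''/${\mathbb C}$ as ``orthogonal''/``real symmetric''/${\mathbb R}$ — so I would carry out the argument once. Here ``decomposable'' is taken to mean isometric to a direct sum of two representations of nonzero dimension, which is the negation of indecomposability for representations of dimension $\ne(0,\dots,0)$.

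$(\mathrm{i})\Rightarrow(\mathrm{ii})$. Suppose $\Phi=(\Phi_1,\dots,\Phi_p)\colon B\oplus C\stackrel{\sim}{\to}A$ is an isometry with $\dim B\ne 0\ne\dim C$, and let $E=(E_1,\dots,E_p)$, $E_i:=I_{B_i}\oplus 0_{C_i}$, be the orthogonal projection of $B\oplus C$ onto $B$. Then $E$ is a selfadjoint idempotent endomorphism of $B\oplus C$, and I claim $F:=\Phi E\Phi^{*}$ works. Indeed $\Phi^{*}=\Phi^{-1}$ and $A_\alpha\Phi_i=\Phi_j(B\oplus C)_\alpha$ give $F_jA_\alpha=A_\alpha F_i$ for each arrow $\alpha\colon i\to j$, while $F^{*}=F$ and $F^{2}=F$ are immediate from $E^{*}=E=E^{2}$ and the fact that $\Phi$ is an isometry; and $F\ne{\bf 0}_A,{\bf 1}_A$ since $B$ and $C$ are nonzero. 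Then $(\mathrm{ii})\Rightarrow(\mathrm{iii})$ is immediate: put $S:=F$, a selfadjoint endomorphism; it is nonscalar because $S=a{\bf 1}_A$ with $S^{*}=S$ forces $a$ real, and then $S^{2}=S$ forces $a\in\{0,1\}$, i.e. $S\in\{{\bf 0}_A,{\bf 1}_A\}$, contradicting (ii).

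$(\mathrm{iii})\Rightarrow(\mathrm{i})$ is the substantive step. Let $S=(S_1,\dots,S_p)$ be a nonscalar selfadjoint endomorphism. Each $S_i$ being Hermitian, $A_i$ is the orthogonal direct sum of its $S_i$-eigenspaces $A_i^{(\lambda)}:=\Ker(S_i-\lambda I)$, $\lambda\in{\mathbb R}$; let $\lambda_1,\dots,\lambda_m$ be the distinct numbers that occur as an eigenvalue of at least one $S_i$. From $S_jA_\alpha=A_\alpha S_i$ one gets $A_\alpha\bigl(A_i^{(\lambda)}\bigr)\subseteq A_j^{(\lambda)}$ for every arrow $\alpha\colon i\to j$, so the eigenspaces assemble into subrepresentations $A^{(1)},\dots,A^{(m)}$ with $A=A^{(1)}\oplus\dots\oplus A^{(m)}$ (an orthogonal direct sum of unitary representations), and each $A^{(k)}$ has nonzero dimension by the choice of $\lambda_k$. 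If $m=1$, every $S_i$ is Hermitian with single eigenvalue $\lambda_1$, so $S_i=\lambda_1 I_{d_i}$ and $S=\lambda_1{\bf 1}_A$ is scalar, a contradiction; hence $m\ge 2$ and $A\simeq A^{(1)}\oplus\bigl(A^{(2)}\oplus\dots\oplus A^{(m)}\bigr)$ is a decomposition into two nonzero summands.

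I do not anticipate a real obstacle. The only care needed is in the bookkeeping of $(\mathrm{iii})\Rightarrow(\mathrm{i})$: verifying that the intertwining relation genuinely forces each $A_\alpha$ to respect the eigenspace splitting of $S$, and noting that ``$S$ nonscalar'' is exactly what rules out the degenerate alternative $m=1$ (equivalently, all $S_i=\lambda I$ for one common $\lambda$). No separate Euclidean argument is needed, since real symmetric matrices are orthogonally diagonalizable over ${\mathbb R}$, and the projection $E$ and the conjugate $\Phi E\Phi^{*}$ in $(\mathrm{i})\Rightarrow(\mathrm{ii})$ are real when $A$, $B$, $C$ and $\Phi$ are.
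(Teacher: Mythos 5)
Your proposal is correct and follows essentially the same route as the paper: the conjugated projection $\Phi E\Phi^{*}$ for (i)$\Rightarrow$(ii), $S:=F$ for (ii)$\Rightarrow$(iii), and for (iii)$\Rightarrow$(i) the simultaneous eigenspace decomposition of the Hermitian (real symmetric) $S_i$, which is just the coordinate-free version of the paper's step of diagonalizing each $S_i$ by a unitary (orthogonal) $U_i$ and reading off the splitting of the transformed representation.
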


\begin{proof}
(i)$\Rightarrow$(ii) Let $S:A
\stackrel{\sim}{\to} B\oplus
C$ be an isometry of unitary
representations (${\mathbb
R}$-isometry of Euclidean
representations) and $B\ne
0\ne C$. Then
$$F:=S^{-1}({\bf 1}_B\oplus
{\bf 0}_C)S:A\to A$$
satisfies (ii).

        (ii)$\Rightarrow$(iii) Put $S:=F$.

         (iii)$\Rightarrow$(i) Since every $S_i$ in $S$ is a Hermitian
(resp., real symmetric)
matrix, there exists a
unitary (real orthogonal)
matrix $U_i$ such that
$$R_i:=U_iS_iU_i^{-1}=\text{diag}(a_{i1},\dots,a_{it_i}),$$
where $a_{ij}\in {\mathbb R}$
and $$a_{i1}\ge\dots\ge
a_{it_i}.$$ Define the
unitary (Euclidean)
representation $B$ by means
of the isometry
$$U:=(U_1,\dots,U_p):A\stackrel{\sim}{\to}
B.$$ Then
$$R:=USU^{-1}:B\stackrel{\sim}{\to}
B$$ is an isometry and
$$R=a_1\mathbb
I_1\oplus\dots\oplus
a_t\mathbb I_t,$$ where
$$a_1>\dots>a_t,\quad \mathbb
I_i=(I_{n_{i1}},\dots,
I_{n_{ip}}),\quad n_{ij}\ge
0.$$ Clearly,
 $$B=B_1\oplus\dots\oplus
 B_t,$$
where
$$\dim(B_i)=(n_{i1},\dots,n_{ip}).$$
\end{proof}
                                                         \medskip
\begin{proof}[Proof of Theorem
\ref{th_un4.1.1}]   1) We
first prove the statement (a)
for an ${\mathbb R}$-{\sl
indecomposable} Euclidean
representation $ A$. Let
$$S=\Phi+i\Psi:
A\stackrel{\sim}{\to}_{\mathbb
C} B,$$ where $\Phi$ and
$\Psi$ are real matrices and
$B$ is a Euclidean
representation. Then $\Phi$
and $\Psi$ are $\mathbb
R$-homomorphisms $A\to B$.
Since
\begin{align*}
{\bf 1}_A
&=S^*S=(\Phi^T-i\Psi^T)(\Phi+i\Psi)
\\&=(\Phi^T\Phi+\Psi^T\Psi)+i(\Phi^T\Psi-\Psi^T\Phi),
\end{align*}
we have
$$
\Phi^T\Phi+\Psi^T\Psi={\bf
1}_A.
$$
By Lemma \ref{lem_un4.1.2},
the selfadjoint ${\mathbb
R}$-endomorphisms
$\Phi^T\Phi$ and $\Psi^T\Psi$
are scalar, that is,
$$\Phi^T\Phi=\lambda{\bf
1}_A,\quad \Psi^T\Psi=\mu{\bf
1}_A,\quad \lambda+\mu=1.$$
Obviously, $\lambda$ and
$\mu$ are non-negative real
numbers. For definiteness,
$\lambda>0,$ then
$\lambda^{-\frac 12} \Phi
:A\stackrel{\sim}{\to}_{\mathbb
R} B$.

      2) Let $A$ be an  ${\mathbb R}$-indecomposable Euclidean
representation that is
decomposable as a unitary
representation. We prove that
$$A\simeq_{\mathbb R}
B^{\mathbb R} \simeq_{\mathbb
C}B\oplus {\bar B},$$ where
$B$ is an indecomposable
unitary representation that
is not isometric to a
Euclidean representation.

Indeed, by Lemma
\ref{lem_un4.1.2} there
exists an endomorphism
$F:A\to A$ such that
$$F=F^*=F^2\notin\{{\bf
0}_A,\,{\bf 1}_A\}.$$ Let
$F=\Phi+i\Psi,$ where $\Phi$
and $ \Psi$ are sequences of
real matrices. Since
$$F=F^*=\Phi^T-i\Psi^T,$$ it
follows that $\Phi=\Phi^T$
and $\Psi=-\Psi^T$. By Lemma
\ref{lem_un4.1.2}, the
endomorphism $\Phi$ is
scalar, i.e.,
$\Phi=\lambda{\bf 1}_A,\
\lambda\in \mathbb R.$ If
$\lambda=0$, then
$$i\Psi=F=F^2=-\Psi^2$$ and
$\Psi={\bf 0}_A$, a
contradiction.

     Hence $\lambda\ne 0$. Since
     $$F=F^2=(\lambda{\bf 1}_A
+i\Psi)^2=(\lambda^2{\bf
1}_A-\Psi^2)+2\lambda
i\Psi,$$ we have
$$\lambda^2{\bf
1}_A-\Psi^2=\lambda {\bf
1}_A,\qquad 2\lambda\Psi
=\Psi.$$ The condition
$F\ne\bf 1_A$ implies
$$\Psi\ne {\bf 0}_A,\quad
\lambda=\frac 12,\quad
\Psi^2=-\frac14{\bf 1}_A.$$

By \cite[Sect. 4.4, Exercise
25]{a_un21}, every
nonsingular skew-symmetric
real matrix is real
orthogonally similar to a
direct sum of matrices of the
form
$$\left[\genfrac{}{}{0pt}{}{0}{-a}\,
\genfrac{}{}{0pt}{}{a}{0}\right],\qquad
a>0.$$   Since
$$\Psi^T=-\Psi,\qquad
\Psi^2=-\frac 14{\bf 1}_A,$$
there exists a sequence $S$
of real orthogonal matrices
such that $$S\Psi
S^{-1}=\frac 12{\mathbb
I}\otimes\matr{0}{1}{-1}{0}$$
(see \eqref{un-2.4}), where
${\mathbb I}=(I,\dots,I)$.
Put $$G:=SFS^{-1}=\frac
12{\mathbb
I}\otimes\matr{1}{i}{-i}{1}.$$

     Define the Euclidean representation $C$ by means of
$S
:A\stackrel{\sim}{\to}_{\mathbb
R} C$. Then $G:C\to C$ is an
${\mathbb R}$-endomorphism.
It follows from the form of
$G$ and the definition of
homomorphisms, that
$C=B^{\mathbb R}$ for a
certain $B$.

     If $B$ is a decomposable unitary representation, say,
$B\simeq_{\mathbb C}X\oplus
Y$, then by (\ref{un-4.1})
$$A\simeq_{\mathbb
R}B^{\mathbb R}
\simeq_{\mathbb C}B\oplus
{\bar B} \simeq_{\mathbb
C}X\oplus Y\oplus {\bar
X}\oplus {\bar Y}
\simeq_{\mathbb C}X^{\mathbb
R}\oplus Y^{\mathbb R},$$ by
1) $A\simeq_{\mathbb
R}X^{\mathbb R}\oplus
Y^{\mathbb R}$, a
contradiction.

     If $B$ is isometric to a Euclidean representation, say,
$$B\simeq_{\mathbb C}D={\bar
D},$$ then $$A\simeq_{\mathbb
R}B^{\mathbb R}
\simeq_{\mathbb C}B\oplus
{\bar B} \simeq_{\mathbb
C}D\oplus D,$$ by 1)
$A\simeq_{\mathbb R}D\oplus
D$, a contradiction. This
proves 2).

      (a)--(b). Let $A$ and $B$ be
Euclidean representations,
$A\simeq_{\mathbb R}B,$
$$A\simeq_{\mathbb
R}A_1\oplus\dots\oplus
A_l,\qquad B\simeq_{\mathbb
R}B_1\oplus\dots\oplus B_r,$$
where $A_i$ and $B_j$ are
${\mathbb R}$-indecomposable.
From 2) and Theorem
\ref{th_un3.1.2}(a), $l=r$
and, after a permutation of
summands, $A_i\simeq_{\mathbb
C}B_i$. By 1),
$A_i\simeq_{\mathbb R}B_i$.
The equality \eqref{un-4.2}
is obvious.

(c). By Corollary
\ref{cor_un3.3.2}(a), there
exists an
 ${\mathbb R}$-indecomposable Euclidean representation
 (with entries in ${\mathbb N}_0$) of
dimension $z$ for every $z\in
D(Q)$.
     Conversely, let $A$ be an  ${\mathbb R}$-indecomposable
Euclidean representation. If
$A$ is indecomposable as a
unitary representation, then
by Theorem \ref{th_un_3.2.1}
$\dim(A)\in D(Q)$. Otherwise
by 2) $A\simeq_{\mathbb
C}B\oplus{\bar B}$, where $B$
is an indecomposable unitary
representation, i.e.,
$d:=\dim(B)\in D(Q)$. Since
$B$ is not isometric to a
Euclidean representation,
$${\rm supp}\,(d)\notin
\{\bullet,\ \bullet\! \to
\!\bullet\}.$$ Applying twice
the definition of $D(Q)$ (see
page \ref{page_un_hyg}), we
have $$dM_Q\ge d,\quad
2dM_Q\ge 2d,\quad
\dim(A)=2d\in D(Q).$$
\end{proof}


\subsection{Unitary representations that are isometric to
                 Euclidean representations}
 \label{ss_un4.2}
Theorem \ref{th_un4.1.1}(b)
reduces the problem of
classifying Euclidean
representations of a quiver
$Q$ to the following two
problems:
\begin{itemize}
  \item
classify unitary
representations of $Q$ (i.e.,
construct the set ind$(Q)$);

  \item bring to light
for each
$A\in{\text{ind}}(Q)$ whether
it is isometric to a
Euclidean representation and
to construct that
representation.

\end{itemize}

In this section we consider
the second problem.
\medskip

\begin{lemma}\label{lem_un4.2.1}
{\rm(a)} {If $S$ is a
symmetric unitary matrix,
then there exists a unitary
matrix $U$ such that
$S=U^TU$}.

{\rm(b)} {If $S$ is a
skew-symmetric unitary
matrix, then there exists a
unitary matrix $U$ such that
$$S=U^T
\left(\matr{0}{1}{-1}{0}\oplus\dots\oplus
\matr{0}{1}{-1}{0}\right)
U.$$}
\end{lemma}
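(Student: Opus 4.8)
The plan is to reduce both parts to the real spectral theorem applied to a commuting pair of real matrices. Write $S=P+iQ$ with $P,Q$ real. In case (a) the hypothesis $S=S^{T}$ gives $P=P^{T}$ and $Q=Q^{T}$; in case (b) it gives $P=-P^{T}$ and $Q=-Q^{T}$. Expanding $I=S^{*}S$ and separating real and imaginary parts yields $P^{2}+Q^{2}=I$ and $PQ=QP$ in case (a), and $P^{2}+Q^{2}=-I$ and $PQ=QP$ in case (b). Thus in both cases $P$ and $Q$ form a commuting pair of real symmetric (resp.\ real skew-symmetric) matrices, and the whole problem becomes a matter of simultaneously reducing this pair and then extracting a square root on the (block-)diagonal.

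In case (a) I would use that commuting real symmetric matrices are simultaneously orthogonally diagonalizable: choose a real orthogonal $V$ with $V^{T}PV$ and $V^{T}QV$ diagonal. Then $V^{T}SV=\operatorname{diag}(e^{i\theta_{1}},\dots,e^{i\theta_{n}})$, the entries having modulus $1$ because $V^{T}SV$ is unitary. Putting $E:=\operatorname{diag}(e^{i\theta_{1}/2},\dots,e^{i\theta_{n}/2})$ and $U:=EV^{T}$, one checks that $U$ is unitary and $U^{T}U=VE^{T}EV^{T}=VE^{2}V^{T}=S$, which proves (a).

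In case (b) the analogous tool is the simultaneous reduction of a commuting pair of real skew-symmetric matrices: there is a real orthogonal $V$ such that $V^{T}PV$ and $V^{T}QV$ are block-diagonal with $2\times2$ blocks of the form $a\matr{0}{1}{-1}{0}$ occupying the same index pairs (no $1\times1$ zero block can occur, since $P^{2}+Q^{2}=-I$ is nonsingular; in particular $n$ is even). Then $V^{T}SV=e^{i\phi_{1}}\matr{0}{1}{-1}{0}\oplus\dots\oplus e^{i\phi_{k}}\matr{0}{1}{-1}{0}$ for suitable real $\phi_{1},\dots,\phi_{k}$, the moduli again being $1$ by unitarity. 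Since $e^{i\phi}\matr{0}{1}{-1}{0}=(e^{i\phi/2}I_{2})^{T}\matr{0}{1}{-1}{0}(e^{i\phi/2}I_{2})$, setting $E:=e^{i\phi_{1}/2}I_{2}\oplus\dots\oplus e^{i\phi_{k}/2}I_{2}$ and $U:=EV^{T}$ gives a unitary $U$ with $U^{T}\bigl(\matr{0}{1}{-1}{0}\oplus\dots\oplus\matr{0}{1}{-1}{0}\bigr)U=VE^{T}\bigl(\bigoplus_{j}\matr{0}{1}{-1}{0}\bigr)EV^{T}=S$, which is (b).

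The only step that is not a one-line verification is the simultaneous block-diagonalization of the commuting real skew-symmetric pair $P,Q$ in case (b), and in particular the claim that their $2\times2$ rotation blocks can be aligned in the same index pairs so that $V^{T}SV$ is a genuine $2\times2$ block-diagonal matrix. I would obtain this either by decomposing $\mathbb{R}^{n}$ into the joint real $P,Q$-invariant $2$-planes (equivalently, diagonalizing $P+iQ$ over $\mathbb{C}$ and pairing conjugate eigenlines), or simply by citing the real spectral decomposition of a commuting family of normal matrices from the same source already used in the paper for the canonical form of a single real skew-symmetric matrix. Everything else is direct computation.
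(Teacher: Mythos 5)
Your argument is correct, but it takes a genuinely different route from the paper's. You reduce everything to the simultaneous spectral decomposition of the commuting pair $P=\operatorname{Re}(S)$, $Q=\operatorname{Im}(S)$ (real symmetric in (a), real skew-symmetric in (b), with $P^2+Q^2=\pm I$ forcing unimodular diagonal data), and then extract a square root of the resulting diagonal, resp.\ $2\times 2$ block-diagonal, form of $V^TSV$. This is essentially the standard proof of the Takagi/Autonne factorization specialized to unitary $S$; the one delicate point, aligning the $2\times2$ blocks of $P$ and $Q$ in case (b), is correctly handled by your remark that the joint invariant $2$-planes are spanned by conjugate pairs of eigenlines of the normal matrix $S=P+iQ$ (these pairs are genuine, i.e.\ $v\not\parallel\bar v$, precisely because $P^2+Q^2=-I$ is nonsingular). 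The paper instead proceeds by explicit deflation: it builds an elementary unitary $U_n$ whose first row is proportional to $e_1+s_1$ (resp.\ spans $\mathbb{C}e_1+\mathbb{C}s_1$ together with the second row), checks by a one-line computation using $\bar S S=\pm I$ that conjugation by $U_n^{-1}$ splits off a $[1]$ (resp.\ a $\matr{0}{1}{-1}{0}$) block, and recurses, obtaining $U$ as an explicit product $U_n(I_1\oplus U_{n-1})\cdots$. The paper states its reason for this choice: an explicit form of $U$ is needed to actually construct the Euclidean representation in Theorem \ref{th_un4.2.2}. Your proof is more conceptual and cleanly exhibits why the two cases are parallel, at the cost of being less algorithmic (it invokes a full simultaneous eigendecomposition rather than a finite sequence of elementary reflections); either is acceptable for establishing the lemma as stated.
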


\begin{proof}
Analogous statement for a
non-unitary matrix $S$ is
given in \cite[Sect. 4.4,
Corollary 4.4.4 and Exercise
26]{a_un21}. The condition of
unitarity makes its proof
much more easy. We give it
sketchily since an explicit
form of $U$ is needed for the
applications of the next
theorem.

     Given a symmetric (skew-symmetric) unitary matrix
$S_n $ with rows
$s_1,\dots,s_n$. If $$s_1\ne
e_1:= (1,0,\dots,0),$$ we
take a unitary matrix $U_n$
with rows $u_1,\dots,u_n$
such that
$$u_1=\alpha(e_1+s_1),\quad
\alpha\in{\mathbb C},$$
(resp., ${\mathbb C}u_1+
{\mathbb C}u_2={\mathbb
C}e_1+ {\mathbb C}s_1$). Then
\begin{align*}
\bar u_1S_n&=\alpha(e_1+\bar
s_1)S_n =\alpha(e_1S_n+\bar
s_1S_n)\\&=\alpha(s_1+e_1)
=u_1=e_1U_n,
\end{align*}
hence
$$(U_n^{-1})^TS_nU_n^{-1}=\bar
U_nS_nU_n^{-1} =[1]\oplus
S_{n-1}$$ (resp., then $$\bar
U_n S_nU_n^{-1}
=\matr{0}{\beta}{-\beta}{0}\oplus
S_{n-2},$$ $|\beta|=1$;
replacing $u_2$ by $\beta
u_2$, we make $\beta=1$). If
$s_1=e_1$, we have
$$S_n=[1]\oplus
S_{n-1},\qquad U_n:=I_n.$$ We
repeat this procedure until
we obtain the required
$$U:=U_n(I_1\oplus
U_{n-1})(I_2\oplus U_{n-2})
\cdots(I_{n-1}\oplus U_1)$$
(resp., $U:=U_n(I_2\oplus
U_{n-2}) \cdots$).
\end{proof}
                                                        \medskip

\begin{theorem}\label{th_un4.2.2}
\begin{itemize}
  \item[\rm(a)]
 { Let $A$ be a
unitary representation and
$A\not\simeq_{\mathbb C}{\bar
A}$. Then $A$ is not
isometric to a Euclidean
representation.}

  \item[\rm(b)]    {Let $A$ be
an indecomposable unitary
representation and $S:
A\stackrel{\sim}{\to}_{\mathbb
C} {\bar A}.$}
\begin{itemize}
  \item[\rm(i)]
{\it If $S=S^T$, then $A$ is
isometric to a Euclidean
representation $B$ given by
$U:
A\stackrel{\sim}{\to}_{\mathbb
C} B$, where $U_1,\dots,U_p$
are arbitrary unitary
matrices such that
$U_i^TU_i=S_i\ ($they exist
by Lemma}
{\rm\ref{lem_un4.2.1}(a))}.

  \item[\rm(ii)]    {\it If
$S\ne S^T$, then $S=-S^T$ and
$A$ is not isometric to a
Euclidean representation but
is isometric to a unitary
representation $C$ of the
form $$\matr{X}{Y}{-\bar
Y}{\bar X}$$ given by $V:
A\stackrel{\sim}{\to}_{\mathbb
C} C$, where $V_1,\dots,V_p$
are arbitrary unitary
matrices such that $$V_i^T
\matr{0}{I}{-I}{0} V_i=S_i$$
$($they exist by Lemma}
{\rm\ref{lem_un4.2.1}(b))}.
\end{itemize}
\end{itemize}

\end{theorem}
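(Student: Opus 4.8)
The plan is to dispose of (a) by a one-line conjugation argument, and to prove (b) in two stages: first use the indecomposability of $A$ to show, via the scalarity of autometries of an indecomposable representation (Theorem~\ref{th_un3.1.2}(b)), that any isometry $S:A\stackrel{\sim}{\to}_{\mathbb C}\bar A$ is either symmetric or skew-symmetric; then, in each of the two cases, exhibit the promised model by an explicit unitary change of basis built from the congruence factors produced by Lemma~\ref{lem_un4.2.1}.

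For (a): if $\Phi:A\stackrel{\sim}{\to}_{\mathbb C}B$ with $B$ Euclidean then $\bar B=B$, so conjugating gives the isometry $\bar\Phi:\bar A\stackrel{\sim}{\to}_{\mathbb C}\bar B=B$, whence $\bar\Phi^{-1}\Phi:A\stackrel{\sim}{\to}_{\mathbb C}\bar A$, contradicting $A\not\simeq_{\mathbb C}\bar A$. For the alternative in (b), conjugating the relations defining $S$ shows $\bar S:\bar A\stackrel{\sim}{\to}_{\mathbb C}A$ is again an isometry, so $\bar S\,S=(\bar S_1S_1,\dots,\bar S_pS_p)$ is an autometry of $A$; by Theorem~\ref{th_un3.1.2}(b) it equals $\lambda\,{\bf 1}_A$ with $|\lambda|=1$. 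Since each $S_i$ is unitary we have $\bar S_i=(S_i^{T})^{-1}$, so $\bar S_iS_i=\lambda I$ becomes $S_i=\lambda S_i^{T}$; transposing gives $S_i^{T}=\lambda S_i$, whence $\lambda^2=1$. Thus $\lambda=1$ gives $S=S^{T}$, and if $S\ne S^{T}$ then $\lambda=-1$ and $S=-S^{T}$, which is the first clause of (b)(ii).

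For the constructions, write $\bar A_\alpha=S_jA_\alpha S_i^{-1}$ (from $S:A\to\bar A$) and recall the identity $\bar U=(U^{T})^{-1}$, valid for any unitary $U$. In case (i) each $S_i$ is symmetric unitary, so Lemma~\ref{lem_un4.2.1}(a) yields unitary $U_i$ with $U_i^{T}U_i=S_i$; putting $B_\alpha:=U_jA_\alpha U_i^{-1}$ (so that $U:A\stackrel{\sim}{\to}_{\mathbb C}B$) one computes $\bar U_jS_j=U_j$ and $S_i^{-1}\bar U_i^{-1}=U_i^{-1}$, hence $\bar B_\alpha=\bar U_j\bar A_\alpha\bar U_i^{-1}=U_jA_\alpha U_i^{-1}=B_\alpha$, i.e.\ $B$ is Euclidean. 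In case (ii) each $S_i$ is skew-symmetric unitary, hence of even size, so (up to a permutation of coordinates) Lemma~\ref{lem_un4.2.1}(b) yields unitary $V_i$ with $V_i^{T}J_iV_i=S_i$, where $J_i:=\matr{0}{I}{-I}{0}$ has the size of $S_i$; put $C_\alpha:=V_jA_\alpha V_i^{-1}$. A short block computation shows that a matrix $M$ has the shape $\matr{X}{Y}{-\bar Y}{\bar X}$ precisely when $\bar M=J_jMJ_i^{-1}$ with $J_j,J_i$ of matching sizes, and exactly as in (i) one gets $\bar V_jS_j=J_jV_j$ and $S_i^{-1}\bar V_i^{-1}=V_i^{-1}J_i^{-1}$, so $\bar C_\alpha=J_jC_\alpha J_i^{-1}$ and every $C_\alpha$ has the required form.

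There remains the claim in (ii) that $A$ is not isometric to a Euclidean representation; this uses indecomposability once more. If $T:A\stackrel{\sim}{\to}_{\mathbb C}D$ with $D$ Euclidean, then $\bar T:\bar A\stackrel{\sim}{\to}_{\mathbb C}D$, so $S':=\bar T^{-1}T:A\stackrel{\sim}{\to}_{\mathbb C}\bar A$ is an isometry with $S'_i=T_i^{T}T_i$ symmetric. Then $(S')^{-1}S$ is an autometry of $A$, so $S_i=\mu S'_i$ for a common scalar $\mu$ by Theorem~\ref{th_un3.1.2}(b); transposing and using that each $S'_i$ is symmetric forces $S_i=S_i^{T}$, contradicting $S_i=-S_i^{T}\ne 0$ (some $S_i$ is genuinely nonzero since $\dim A\ne(0,\dots,0)$). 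I expect no conceptual obstacle here: the argument rests on Lemma~\ref{lem_un4.2.1} for the existence of the congruence factors and on the scalarity of autometries of an indecomposable representation, and the only real care needed is in tracking transposes, conjugates and inverses of unitary matrices through the two base changes.
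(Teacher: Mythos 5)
Your proposal is correct and follows essentially the same route as the paper: the same $\bar\Phi^{-1}\Phi$ (equivalently $R^TR$) trick for (a), the same use of the autometry $\bar S S=\lambda\mathbf{1}_A$ and Theorem~\ref{th_un3.1.2}(b) to get $\lambda^2=1$, and the same base changes via the congruence factors of Lemma~\ref{lem_un4.2.1}. The only differences are presentational (you verify $\bar B_\alpha=B_\alpha$ and $\bar C_\alpha=J_jC_\alpha J_i^{-1}$ by direct computation, where the paper observes that $\bar US=U$ and $\bar VSV^{-1}=J$), and your transpose/conjugate bookkeeping checks out.
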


\begin{proof}  (a) Let
$R:
A\stackrel{\sim}{\to}_{\mathbb
C}B$, where $B$ is a
Euclidean representation.
Then
$$R^T={\bar R}^{-1}: {\bar
B}\stackrel{\sim}{\to}_{\mathbb
C} {\bar A},\qquad H:=R^TR:
A\stackrel{\sim}{\to}_{\mathbb
C} {\bar A}$$ (observe that
$H=H^T$).

(b) Let $A$ be an
indecomposable unitary
representation and $S:
A\stackrel{\sim}{\to}_{\mathbb
C} {\bar A}$. Then ${\bar
S}S:
A\stackrel{\sim}{\to}_{\mathbb
C} A$, by Theorem
\ref{th_un3.1.2}(b)
$${\bar S}S=\lambda {\bf
1}_A,\ S=\lambda{\bar
S}^{-1}=\lambda S^T=\lambda
(\lambda S^T)^T =\lambda^2
S,$$ and
$\lambda\in\{1,-1\}$.

     (i) Let $\lambda=1,\ U: A\stackrel{\sim}{\to}_{\mathbb C}B$
and $U^TU=S$. Then
$$U=(U^T)^{-1}S={\bar U}S:
A\stackrel{\sim}{\to}_{\mathbb
C} {\bar B}$$ and $B={\bar
B}$.

     (ii) Let $\lambda=-1$. Then  $A$ is not
isometric to a Euclidean
representation (otherwise, by
(a) there exists $$H=H^T:
A\stackrel{\sim}{\to}_{\mathbb
C}\bar A;$$ by Theorem
\ref{th_un3.1.2}(b)
$$S^{-1}H=\mu {\bf
1}_A,\qquad H^T=\mu S^T=-\mu
S=-H,$$ a contradiction).
      Let $V: A\stackrel{\sim}{\to}_{\mathbb C}C$,
where $$V_i^T
\matr{0}{I}{-I}{0} V_i=S_i.$$
Then $$\bar VSV^{-1}:
 C\stackrel{\sim}{\to}_{\mathbb C}{\bar
 C}.$$
If $\alpha $ is an arrow of
$Q$, then
$$\matr{0}{I}{-I}{0}C_{\alpha}={\bar
C}_{\alpha}
\matr{0}{I}{-I}{0}$$ and
$C_{\alpha}$ is of the form
$$\matr{X}{Y}{-\bar Y}{\bar
X}.$$ \end{proof}
                                                      \medskip

Applying this theorem to
unitary representations of
the quiver \quiv\!\!, we
obtain

\begin{corollary}\label{col_un4.2.2}
{Let $A$ be a complex matrix
that is not unitarily similar
to a direct sum of matrices,
and let $S^{-1}AS={\bar A}$
for a unitary matrix $S\
($such $S$ exists if $A$ is
unitarily similar to a real
matrix$)$. Then $A$ is
unitarily similar to a  real
matrix if and only if $S$ is
symmetric.} \eprf
\end{corollary}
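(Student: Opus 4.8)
The plan is to obtain this as the special case of Theorem~\ref{th_un4.2.2} for the quiver \quiv, whose unitary representations are exactly single complex matrices considered up to unitary similarity. Under this dictionary the hypothesis ``$A$ is not unitarily similar to a direct sum of matrices'' says precisely that $A$ is an indecomposable unitary representation of \quiv, and ``$A$ is unitarily similar to a real matrix'' says precisely that $A$, as a unitary representation of \quiv, is isometric to a Euclidean representation (a single real matrix).

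First I would rewrite the hypothesis in the form used in Theorem~\ref{th_un4.2.2}. Right-multiplying $S^{-1}AS=\bar A$ by $S^{-1}$ gives $S^{-1}A=\bar A S^{-1}$, so $T:=S^{-1}$ is a unitary matrix with $TA=\bar A T$; that is, $T$ is an isometry $A\stackrel{\sim}{\to}_{\mathbb C}\bar A$ in the sense of Section~\ref{s_un3}. Since transposition and inversion commute, $S=S^T$ if and only if $T=T^T$, so symmetry of $S$ is equivalent to symmetry of $T$. The parenthetical remark is the contrapositive of Theorem~\ref{th_un4.2.2}(a): if $A$ is unitarily similar to a real matrix $B$, then $\bar A$ is unitarily similar to $\bar B=B$ and hence to $A$, which provides a unitary $S$ with $S^{-1}AS=\bar A$.

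Now I would apply Theorem~\ref{th_un4.2.2}(b) to the indecomposable unitary representation $A$ together with the isometry $T$. If $S$ is symmetric, then $T=T^T$ and part~(b)(i) gives that $A$ is isometric to a Euclidean representation, i.e.\ $A$ is unitarily similar to a real matrix. If $S$ is not symmetric, then $T\ne T^T$ and part~(b)(ii) gives $T=-T^T$ (so $S=-S^T$) together with the conclusion that $A$ is \emph{not} isometric to a Euclidean representation, i.e.\ $A$ is not unitarily similar to a real matrix. Combining the two cases yields the asserted equivalence. Because the argument is a direct specialization of Theorem~\ref{th_un4.2.2}, I do not expect any real obstacle; the one point needing care is the bookkeeping of the first step, namely that in the paper's convention the matrix implementing $A\stackrel{\sim}{\to}_{\mathbb C}\bar A$ is $S^{-1}$, and that replacing $S$ by $S^{-1}$ leaves the symmetric/skew-symmetric alternative unchanged.
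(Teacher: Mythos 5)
Your proposal is correct and is exactly the route the paper intends: the corollary is stated as the specialization of Theorem~\ref{th_un4.2.2} to the one-loop quiver, and your only added content --- checking that the isometry $A\stackrel{\sim}{\to}_{\mathbb C}\bar A$ is implemented by $S^{-1}$ and that passing from $S$ to $S^{-1}$ preserves the symmetric/skew-symmetric dichotomy --- is the correct bookkeeping the paper leaves implicit.
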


\end{document}